\numberwithin{equation}{section}
\def\PP{\mathbb{P}}
\def\QQ{\mathbb{Q}}
\def\EE{\mathbb{E}}
\def\11{\mathds{1}}
\def\E{\mathbb{E}}
\def\P{\mathbb{P}}
\def\R{\mathbb{R}}
\def\Q{\mathbb{Q}}
\def\d{\partial}
\def\Z{\mathbb{Z}}
\newtheorem{thm}{Theorem}[section]
\newtheorem{lem}[thm]{Lemma}
\newtheorem{cor}[thm]{Corollary}
\newtheorem{prop}[thm]{Proposition}
\theoremstyle{remark}
\newtheorem{rem}{Remark}
\newtheorem{exa}{Example}
    \def\restriction#1#2{\mathchoice
                  {\setbox1\hbox{${\displaystyle #1}_{\scriptstyle #2}$}
                  \restrictionaux{#1}{#2}}
                  {\setbox1\hbox{${\textstyle #1}_{\scriptstyle #2}$}
                  \restrictionaux{#1}{#2}}
                  {\setbox1\hbox{${\scriptstyle #1}_{\scriptscriptstyle #2}$}
                  \restrictionaux{#1}{#2}}
                  {\setbox1\hbox{${\scriptscriptstyle #1}_{\scriptscriptstyle #2}$}
                  \restrictionaux{#1}{#2}}}
    \def\restrictionaux#1#2{{#1\,\smash{\vrule height .8\ht1 depth .85\dp1}}_{\,#2}}
\begin{document}

\title{Exponential convergence to quasi-stationary distribution and $Q$-process}

\author{Nicolas Champagnat$^{1,2}$, Denis Villemonais$^{1,2}$}

\footnotetext[1]{Universit\'e de Lorraine, IECN, Campus Scientifique, B.P. 70239,
  Vand{\oe}uvre-l\`es-Nancy Cedex, F-54506, France} \footnotetext[2]{Inria, TOSCA team,
  Villers-l\`es-Nancy, F-54600, France.\\
  E-mail: Nicolas.Champagnat@inria.fr, Denis.Villemonais@univ-lorraine.fr}

\maketitle

\begin{abstract}
  For general, almost surely absorbed Markov processes, we obtain necessary and sufficient conditions for exponential convergence to
  a unique quasi-stationary distribution in the total variation norm. These conditions also ensure the existence and exponential
  ergodicity of the $Q$-process (the process conditioned to never be absorbed). We apply these results to one-dimensional birth and
  death processes with catastrophes, multi-dimensional birth and death processes, infinite-dimensional population models with
  Brownian mutations and neutron transport dynamics absorbed at the boundary of a bounded domain.
\end{abstract}

\noindent\textit{Keywords:}{ process with absorption; quasi-stationary distribution;
$Q$-process; Dobrushin's ergodicity coefficient; uniform mixing property; birth and death process; neutron transport process.}

\medskip\noindent\textit{2010 Mathematics Subject Classification.} Primary: {60J25; 37A25; 60B10; 60F99}. Secondary: {60J80; 60G10; 92D25}.

\section{Introduction}
\label{sec:intro}

Let $(\Omega,({\cal F}_t)_{t\geq 0},(X_t)_{t\geq 0},(P_t)_{t\geq 0},(\PP_x)_{x\in E\cup \{\partial\}})$ be a
time homogeneous Markov process with state space $E\cup\{\partial\}$~\cite[Definition III.1.1]{rogers-williams-00a}, where
$(E,{\cal E})$ is a measurable space and $\partial\not\in E$. We recall that $\PP_x(X_0=x)=1$, $P_t$ is the transition function of the process satisfying the usual measurability assumptions and Chapman-Kolmogorov equation. The family $(P_t)_{t\geq 0}$ defines a semi-group of operators on the set ${\cal B}(E\cup\{\d\})$ of bounded Borel functions on $E\cup\d$ endowed with the uniform norm.
  We will also denote by $p(x;t,dy)$ its transition kernel, \textit{i.e.} $P_t f(x)=\int_{E\cup\{\d\}} f(y) p(x;t,dy)$ for all $f\in{\cal B}(E\cup\{\d\})$.
For all probability measure $\mu$ on $E\cup\{\partial\}$, we will use the notation
$$
\PP_\mu(\cdot):=\int_{E\cup\{\partial\}}\PP_x(\cdot)\mu(dx).
$$
We shall denote by $\EE_x$ (resp.\ $\EE_\mu$) the expectation corresponding to $\PP_x$ (resp.\
$\PP_\mu$).

We consider a Markov processes absorbed at $\partial$. More precisely, we assume that $X_s=\partial$ implies $X_t=\partial$ for all
$t\geq s$. This implies that
$$
\tau_\partial:=\inf\{t\geq 0,X_t=\partial\}
$$
is a stopping time. We also assume that $\tau_\partial<\infty$ $\PP_x$-a.s.\ for all $x\in E$ and for all $t\geq 0$ and $\forall x\in E$, $\PP_x(t<\tau_\partial)>0$.

Our first goal is to prove that Assumption (A) below is a necessary and sufficient criterion for the existence of a unique quasi-limiting distribution $\alpha$ on $E$ for the process 
$(X_t,t\geq 0)$, i.e. a probability measure $\alpha$ such that for all probability measure $\mu$ on $E$ and all $A\in{\cal E}$,
\begin{align}
\label{eq:QLD}
\lim_{t\rightarrow+\infty}\PP_\mu(X_t\in A\mid t<\tau_\partial)=\alpha(A),
\end{align}
where, in addition, the convergence is exponential and uniform with respect to $\mu$ and $A$.
In particular, $\alpha$ is also the unique \emph{quasi-stationary distribution}~\cite{MV12},
i.e. the unique probability measure $\alpha$ such that $\PP_\alpha(X_t\in\cdot\mid
t<\tau_\partial)=\alpha(\cdot)$ for all $t\geq 0$.

\paragraph{Assumption~(A)}
There exists a probability measure $\nu$ on $E$ such that
\begin{itemize}
\item[(A1)] there exists $t_0,c_1>0$ such that for all $x\in E$,
  $$
  \PP_x(X_{t_0}\in\cdot\mid t_0<\tau_\partial)\geq c_1\nu(\cdot);
  $$
\item[(A2)] there exists $c_2>0$ such that for all $x\in E$ and $t\geq 0$,
  $$
  \PP_\nu(t<\tau_\partial)\geq c_2\PP_x(t<\tau_\partial).
  $$
\end{itemize}

\begin{thm}
  \label{thm:QSD}
  Assumption~(A) implies the existence of a probability measure $\alpha$ on $E$ such that, for any initial  distribution $\mu$,
  \begin{align}
  \left\|\PP_\mu(X_t\in\cdot\mid t<\tau_\partial)-\alpha(\cdot)\right\|_{TV}\leq
  2(1-c_1c_2)^{\lfloor t/t_0\rfloor},
  \end{align}
  where $\lfloor\cdot\rfloor$ is the integer part function and $\|\cdot\|_{TV}$ is the total variation norm.
  
  Conversely, if there is uniform exponential convergence for the total variation norm in~\eqref{eq:QLD}, then Assumption (A) holds true.
\end{thm}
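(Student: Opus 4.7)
The plan is to establish the forward implication by showing that the non-linear conditional semigroup $\Phi_t\mu := \PP_\mu(X_t\in\cdot\mid t<\tau_\partial)$ is a strict contraction of ratio $1-c_1c_2$ in total variation when evaluated at time $t_0$. The Markov property applied to the conditional distribution yields the semigroup identity $\Phi_{s+t}\mu = \Phi_s(\Phi_t\mu)$, so iterating the one-step estimate
\[
\|\Phi_{t_0}\mu_1 - \Phi_{t_0}\mu_2\|_{TV}\le (1-c_1c_2)\,\|\mu_1-\mu_2\|_{TV},
\]
together with the trivial bound $\|\mu_1-\mu_2\|_{TV}\le 2$, gives $\|\Phi_{nt_0}\mu_1 - \Phi_{nt_0}\mu_2\|_{TV}\le 2(1-c_1c_2)^n$, from which the claimed decay at arbitrary $t$ follows.

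For the single-step contraction, I would Hahn--Jordan decompose $\mu_i = (1-\sigma)\eta + \sigma\nu_i^*$, with $\sigma=\|\mu_1-\mu_2\|_{TV}/2$ and $\nu_1^*,\nu_2^*$ mutually singular probabilities. Writing $\Phi_{t_0}\mu(f) = \mu(\pi_{t_0}f)/\mu(h)$, where $\pi_{t_0}$ is the sub-Markov semigroup and $h(x):=\PP_x(t_0<\tau_\partial)=\pi_{t_0}\mathbf{1}(x)$, a short algebraic manipulation yields the identity
\[
\Phi_{t_0}\mu_1(f)-\Phi_{t_0}\mu_2(f) \;=\; \frac{1}{\mu_1(h)}\,(\mu_1-\mu_2)\bigl(\pi_{t_0}(f-\Phi_{t_0}\mu_2(f))\bigr).
\]
Assumption (A1), in the equivalent form $\pi_{t_0}f/h\ge c_1\nu(f)$ for $f\ge 0$, controls the oscillation of $\pi_{t_0}(f-c)$, while (A2) gives $\|h\|_\infty\le \nu(h)/c_2$. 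Combining these with $|(\mu_1-\mu_2)(g)|\le \sigma\,\mathrm{osc}(g)$, and absorbing the factor $1/\mu_1(h)$ against $\mu_1(h)\ge c_1\nu(h)$ (which is valid as soon as $\mu_1$ lies in the image of $\Phi_{t_0}$, since then $\mu_1\ge c_1\nu$ by (A1)), the Doeblin gain from $c_1$ and the survival-regularity gain from $c_2$ multiply to yield the factor $1-c_1c_2$.

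Existence of $\alpha$ follows by completeness of the probability measures under $\|\cdot\|_{TV}$: for any fixed $\mu_0$, the contraction makes $(\Phi_{nt_0}\mu_0)_n$ Cauchy, hence convergent to some probability $\alpha$; passing to the limit in $\Phi_{t_0}(\Phi_{nt_0}\mu_0)=\Phi_{(n+1)t_0}\mu_0$ gives $\Phi_{t_0}\alpha=\alpha$, and the analogous limit argument at arbitrary $t\ge 0$ yields $\Phi_t\alpha=\alpha$, so $\alpha$ is the unique quasi-stationary distribution. For $t=nt_0+r$ with $r\in[0,t_0)$, writing $\Phi_t\mu-\alpha=\Phi_r(\Phi_{nt_0}\mu)-\Phi_r\alpha$ and invoking non-expansiveness of $\Phi_r$ on the image of $\Phi_{t_0}$ -- which is verified by revisiting the same key identity with $r$ in place of $t_0$ -- gives the rate at the full $t$.

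For the converse, uniform exponential convergence $\sup_\mu\|\Phi_t\mu-\alpha\|_{TV}\le Ce^{-\gamma t}$ allows us to choose $t_0$ with $\|\Phi_{t_0}\delta_x-\alpha\|_{TV}\le \varepsilon<1$ for every $x\in E$; then $\Phi_{t_0}\delta_x\wedge\alpha$ has mass $\ge 1-\varepsilon/2$ uniformly in $x$, which combined with the quasi-stationarity $\Phi_t\alpha=\alpha$ and the semigroup structure yields (A1) with a suitable $\nu\le\alpha$, while (A2) with $\nu=\alpha$ follows from $\Phi_t\alpha=\alpha$ and the uniform TV bound applied at a fixed reference time via the Markov property. \emph{The main obstacle} on the forward side is the algebraic step in the single-step contraction: assembling (A1) and (A2) so that the coefficient is exactly $1-c_1c_2$ rather than the looser rates $1-c_1$ or $(1-c_1)/(c_1c_2)$ that the crudest estimates give; on the converse side, the delicate step is extracting a genuine pointwise measure-level lower bound of the form $\Phi_{t_0}\delta_x\ge c_1\nu$ from total-variation closeness alone, which must exploit the semigroup structure (for instance via a two-step iteration $\Phi_{2t_0}\delta_x=\Phi_{t_0}(\Phi_{t_0}\delta_x)$) since such domination is strictly stronger than TV-closeness.
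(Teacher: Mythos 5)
The single-step contraction of the nonlinear map $\Phi_{t_0}\mu:=\PP_\mu(X_{t_0}\in\cdot\mid t_0<\tau_\partial)$ is where your plan breaks down, and it breaks down for a structural reason, not a computational one. Your identity
$\Phi_{t_0}\mu_1(f)-\Phi_{t_0}\mu_2(f)=\mu_1(h)^{-1}(\mu_1-\mu_2)\bigl(\pi_{t_0}(f-\Phi_{t_0}\mu_2(f))\bigr)$
is correct, but the best bound it yields with (A1) and (A2) is
$\mathrm{osc}\bigl(\pi_{t_0}(f-c)\bigr)\le 2(1-c_1)\|h\|_\infty$ together with $\mu_1(h)\ge c_1\nu(h)\ge c_1c_2\|h\|_\infty$ (and even the first inequality in the last chain is only available once $\mu_1$ is in the range of $\Phi_{t_0}$), so the Lipschitz coefficient you extract is at most $(1-c_1)/(c_1c_2)$. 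This need not be below $1$, and in typical regimes (small $c_1$) it is far above $1$, so $\Phi_{t_0}$ is simply not a contraction in total variation under (A) with any constant reducible to $1-c_1c_2$. The Feynman--Kac normalization $\mu\mapsto\mu\pi_{t_0}/\mu(h)$ is not a Markov kernel, and Dobrushin minorization of $\Phi_{t_0}\delta_x$ does not pass to a TV-contraction of $\Phi_{t_0}$ on pairs of measures the way it does for linear kernels.

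The paper sidesteps precisely this by introducing the time-inhomogeneous \emph{linear} conditional transition operators $R^T_{s,t}f(x)=\EE_x(f(X_{t-s})\mid T-s<\tau_\partial)$, which condition on survival to a fixed future horizon $T$ rather than to the current time. These satisfy the semigroup relation $R^T_{u,s}R^T_{s,t}=R^T_{u,t}$ and are honest Markov kernels, so once the paper establishes the minorization $\delta_{x_i}R^T_{s,s+1}\ge c_1c_2\,\nu^{T-s}_{x_1,x_2}$ --- here (A1) provides the shape of the minorizing measure and (A2) is needed to show its mass is at least $c_2$, giving the product $c_1c_2$ --- standard Dobrushin contraction for \emph{linear} kernels yields $\|\delta_xR^T_{0,T}-\delta_yR^T_{0,T}\|_{TV}\le 2(1-c_1c_2)^{\lfloor T\rfloor}$, and $\delta_xR^T_{0,T}=\Phi_T\delta_x$. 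The extension to general $\mu_1,\mu_2$ is then done by a convexity argument at the terminal time $T$ (Step~3 of Section~5.1), with no extra loss of constant --- there is never a one-step estimate for $\Phi_{t_0}$ on general measures. On the converse, your instinct about two-step iteration and a common minorizing measure is correct, but extracting a genuine $\nu$ with $\Phi_{t_0}\delta_x\ge c_1\nu$ simultaneously for all $x$ is exactly what is hard: the paper does this by constructing $\nu$ as the infimum of the family $(\delta_xR^{2t}_{0,2t})_{x\in E}$ (Lemma~5.3) and proving it has positive mass, which in turn requires first establishing the boundedness of $s\mapsto e^{\lambda_0 s}\sup_x\PP_x(s<\tau_\partial)$ via the subadditivity-type inequality of Lemma~5.1; the observation that each $\Phi_{t_0}\delta_x\wedge\alpha$ has large mass is not enough, since the simultaneous infimum over $x$ could still vanish.
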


Stronger versions of this theorem and of the other results presented in the introduction will be given in the next sections.


The quasi-stationary distribution describes the distribution of the process on the event of non-absorption. It is well known
(see~\cite{MV12}) that when $\alpha$ is a quasi-stationary distribution, there exists $\lambda_0>0$ such that, for all $t\geq 0$,
\begin{align}
\label{eq:alphaPtbis}
\PP_{\alpha}(t<\tau_{\partial})=e^{-\lambda_0 t}.
\end{align}
The following proposition characterizes the limiting behaviour of the absorption probability for other initial distributions.
\begin{prop}
\label{prop:2intro}
There exists a non-negative function $\eta$ on $E\cup\{\d\}$, positive on $E$ and vanishing on $\d$, such that
\begin{align*}
\mu(\eta)=\lim_{t\rightarrow\infty} e^{\lambda_0 t}\P_\mu(t<\tau_\d),
\end{align*}
where the convergence is uniform on the set of probability measures $\mu$ on $E$.
\end{prop}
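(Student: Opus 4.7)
The plan is to set $\varphi_t(x):=e^{\lambda_0 t}\PP_x(t<\tau_\partial)$ and show that $(\varphi_t)_{t\geq 0}$ is a Cauchy family (in $t$) uniformly in $x$, then take $\eta$ to be its limit. The key algebraic input is the Markov-property identity
\[
\varphi_{t+s}(x)\;=\;\varphi_s(x)\,\EE_x\!\left[\varphi_t(X_s)\mid s<\tau_\partial\right],
\]
obtained by splitting $\PP_x(t+s<\tau_\partial)=\EE_x[\PP_{X_s}(t<\tau_\partial)\mathds{1}_{s<\tau_\partial}]$ and multiplying by $e^{\lambda_0(t+s)}$. Observe also that $\alpha(\varphi_t)=e^{\lambda_0 t}\PP_\alpha(t<\tau_\partial)=1$ for every $t$ by \eqref{eq:alphaPtbis}.

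Before applying this, I would establish a uniform upper bound on $\varphi_t$. From (A1) and the quasi-stationarity of $\alpha$,
\[
\alpha(\cdot)=\PP_\alpha(X_{t_0}\in\cdot\mid t_0<\tau_\partial)=\int \frac{\PP_x(X_{t_0}\in\cdot,\,t_0<\tau_\partial)}{\PP_x(t_0<\tau_\partial)}\cdot\frac{\PP_x(t_0<\tau_\partial)}{e^{-\lambda_0 t_0}}\,\alpha(dx)\;\geq\;c_1\nu(\cdot).
\]
Combined with (A2) this gives $e^{-\lambda_0 t}=\PP_\alpha(t<\tau_\partial)\geq c_1\PP_\nu(t<\tau_\partial)\geq c_1c_2\PP_x(t<\tau_\partial)$, i.e.\ $\|\varphi_t\|_\infty\leq 1/(c_1c_2)$ for all $t\geq 0$.

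Now combining the functional equation with Theorem~\ref{thm:QSD} applied to the bounded function $\varphi_t$,
\[
\bigl|\EE_x[\varphi_t(X_s)\mid s<\tau_\partial]-\alpha(\varphi_t)\bigr|\;\leq\;\|\varphi_t\|_\infty\cdot 2(1-c_1c_2)^{\lfloor s/t_0\rfloor}\;\leq\;\tfrac{2}{c_1c_2}(1-c_1c_2)^{\lfloor s/t_0\rfloor},
\]
and since $\alpha(\varphi_t)=1$, we obtain
\[
\bigl|\varphi_{t+s}(x)-\varphi_s(x)\bigr|\;\leq\;\varphi_s(x)\cdot\tfrac{2}{c_1c_2}(1-c_1c_2)^{\lfloor s/t_0\rfloor}\;\leq\;\tfrac{2}{(c_1c_2)^2}(1-c_1c_2)^{\lfloor s/t_0\rfloor},
\]
uniformly in $x\in E$ and $t\geq 0$. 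Hence $(\varphi_s(x))_{s\geq 0}$ is uniformly Cauchy, so it converges to a limit $\eta(x)$ uniformly in $x\in E$. Integrating against $\mu$ and invoking the uniform bound to justify the interchange of limit and integral yields $\mu(\eta)=\lim_{t\to\infty}e^{\lambda_0 t}\PP_\mu(t<\tau_\partial)$ with convergence uniform over probability measures $\mu$ on $E$, and extending by $\eta(\partial):=0$ agrees with $\varphi_t(\partial)=0$.

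The last step is checking positivity on $E$. Non-negativity is immediate from $\varphi_t\geq 0$. From (A2) applied pointwise, $\varphi_t(\nu)\geq c_2\mu(\varphi_t)$ for any $\mu$; taking $\mu=\alpha$ and passing to the limit gives $\nu(\eta)\geq c_2\alpha(\eta)=c_2>0$. Then (A1) combined with the Markov property yields, for every $x\in E$,
\[
\PP_x(t_0+t<\tau_\partial)\;\geq\;c_1\,\PP_x(t_0<\tau_\partial)\,\PP_\nu(t<\tau_\partial),
\]
and multiplying by $e^{\lambda_0(t_0+t)}$ and letting $t\to\infty$ gives $\eta(x)\geq c_1 e^{\lambda_0 t_0}\PP_x(t_0<\tau_\partial)\,\nu(\eta)>0$, using our standing assumption $\PP_x(t_0<\tau_\partial)>0$. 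The only genuinely delicate point is producing the uniform bound on $\|\varphi_t\|_\infty$, which requires the bootstrap $\alpha\geq c_1\nu$ from (A1) plus the QSD identity; once that is in hand, the remainder is a straightforward contraction argument driven by Theorem~\ref{thm:QSD}.
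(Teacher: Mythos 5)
Your proof is correct and follows essentially the same route as the paper: define $\eta_t(x)=e^{\lambda_0 t}\P_x(t<\tau_\partial)$, derive the Markov-property identity $\eta_{t+s}(x)=\eta_s(x)\,\E_x[\eta_t(X_s)\mid s<\tau_\partial]$, show uniform boundedness of $(\eta_t)$, and then combine this with the exponential convergence of Theorem~\ref{thm:QSD} to conclude that $(\eta_t)$ is uniformly Cauchy. The only cosmetic difference is that the paper invokes its condition (A$2''$) (equivalence already proved) to get the uniform bound and positivity, whereas you rederive the bound $\|\eta_t\|_\infty\le 1/(c_1c_2)$ from (A1)--(A2) via the useful inequality $\alpha\ge c_1\nu$ and give a direct positivity argument through (A1) and the Markov property --- both are clean, and your route is slightly more self-contained at the cost of a few extra lines.
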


Our second goal is to study consequences of Assumption~(A) on the behavior of
the process $X$ conditioned to never be absorbed, usually referred to as the $Q$-process (see~\cite{Athreya1972} in discrete time and for example~\cite{Cattiaux2008} in continuous time).

\begin{thm}
\label{thm:Q-processintro}
Assumption~(A) implies that the family $(\QQ_x)_{x\in E}$ of
probability measures on $\Omega$ defined by
$$
\QQ_x(A)=\lim_{t\rightarrow+\infty}\PP_x(A\mid t<\tau_\partial),\ \forall A\in{\cal F}_s,\ \forall s\geq 0,
$$
is well defined and
the process $(\Omega,({\cal F}_t)_{t\geq 0},(X_t)_{t\geq
  0},(\QQ_x)_{x\in E})$ is an $E$-valued homogeneous Markov process.
In addition, this process admits the unique invariant distribution
\begin{align*}
\beta(dx)=\frac{\eta(x)\alpha(dx)}{\int_E \eta(y)\alpha(dy)}
\end{align*}
and, for any $x\in E$,
\begin{align*}
\left\|\Q_{x}(X_t\in\cdot)-\beta\right\|_{TV}\leq 2(1-c_1c_2)^{\lfloor t/t_0\rfloor}.
\end{align*}
\end{thm}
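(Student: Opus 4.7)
The plan is to realize the $Q$-process as the Doob $h$-transform of the sub-Markov semigroup $P_t$ with harmonic function $h=\eta$, and then to deduce its exponential ergodicity from a Doeblin minorization of the transformed semigroup at time $t_0$ built directly out of Assumption~(A).

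First I would establish the limit defining $\Q_x$ and identify its transition semigroup. For $A\in{\cal F}_s$ and $t>s$, the Markov property at time $s$ rewrites $\PP_x(A\mid t<\tau_\d)$ as the ratio whose numerator is $e^{\lambda_0 s}\EE_x[\11_A\11_{s<\tau_\d}\,e^{\lambda_0(t-s)}\PP_{X_s}(t-s<\tau_\d)]$ and whose denominator is $e^{\lambda_0 t}\PP_x(t<\tau_\d)$. Proposition~\ref{prop:2intro} provides uniform convergence to $\eta$ in both pieces, so bounded convergence yields
$$\Q_x(A)=\frac{e^{\lambda_0 s}\,\EE_x[\11_A\,\11_{s<\tau_\d}\,\eta(X_s)]}{\eta(x)},\qquad A\in{\cal F}_s.$$
Consistency across $s$ is equivalent to the martingale property of $(e^{\lambda_0 s}\eta(X_s)\11_{s<\tau_\d})_{s\geq0}$, which itself follows from the eigen-relation $e^{\lambda_0 u}P_u\eta=\eta$ obtained by applying Proposition~\ref{prop:2intro} at two consecutive times; Kolmogorov extension then defines $\Q_x$ on all of $\Omega$. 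From the formula, a direct computation using the Markov property of $P_t$ confirms that the resulting process is Markov with semigroup
$$\tilde P_u f(y)=\frac{e^{\lambda_0 u}\,\EE_y[f(X_u)\eta(X_u)\11_{u<\tau_\d}]}{\eta(y)}=\frac{\mu_u^y(f\eta)}{\mu_u^y(\eta)},$$
where $\mu_u^y:=\PP_y(X_u\in\cdot\mid u<\tau_\d)$. Invariance of $\beta$ then reduces to the quasi-stationarity relation $\alpha P_t=e^{-\lambda_0 t}\alpha$ on $E$ combined with the normalization $\alpha(\eta)=1$, which comes from Proposition~\ref{prop:2intro} at $\mu=\alpha$.

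The heart of the argument is the exponential rate, and the idea is to exhibit a Doeblin minorization of $\tilde P_{t_0}$. By (A1),
$$\tilde P_{t_0}(x,A)=\frac{\mu_{t_0}^x(\eta\,\11_A)}{\mu_{t_0}^x(\eta)}\geq \frac{c_1\,\nu(\eta\,\11_A)}{\|\eta\|_\infty}.$$
Multiplying (A2) by $e^{\lambda_0 t}$ and letting $t\to\infty$ using the uniform convergence in Proposition~\ref{prop:2intro} gives $c_2\|\eta\|_\infty\leq \nu(\eta)$, so the minorization upgrades to $\tilde P_{t_0}(x,\cdot)\geq c_1c_2\,\nu^\eta(\cdot)$, where $\nu^\eta(dy):=\eta(y)\nu(dy)/\nu(\eta)$. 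Standard Dobrushin contraction under this minorization gives $\|\mu_1 \tilde P_{t_0}-\mu_2 \tilde P_{t_0}\|_{TV}\leq(1-c_1c_2)\|\mu_1-\mu_2\|_{TV}$; iterating $\lfloor t/t_0\rfloor$ times and exploiting the invariance of $\beta$ yields the stated bound. The conceptual point that I anticipate as the most delicate is the transfer of (A2) to the $\eta$-scale: it is crucial that the convergence in Proposition~\ref{prop:2intro} be uniform, so that the pointwise inequality survives the limit $t\to\infty$ and feeds exactly the constant $c_1c_2$ needed to match Theorem~\ref{thm:QSD}'s rate.
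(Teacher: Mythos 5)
Your proposal is correct and, on the construction side, it is essentially the paper's argument re-derived from first principles: the paper invokes the Roynette–Vallois–Yor penalisation theorem to establish that $M_s=e^{\lambda_0 s}\eta(X_s)\11_{s<\tau_\d}/\eta(x)$ is a martingale defining $\Q_x$, while you verify $\EE_x[M_s]=1$ directly from the eigen-relation $e^{\lambda_0 u}P_u\eta=\eta$ and then appeal to Kolmogorov extension; in both cases the Markov property and the formula for $\tilde P_t$ come out the same way. Where your route genuinely diverges is the ergodicity bound. The paper obtains the contraction for $\Q$ as a limit as $T\to\infty$ of the already-established contraction of the auxiliary semigroups $R_{0,t}^T$ (Step~2 of the proof of Theorem~\ref{thm:QSD_full}), whereas you exhibit a Doeblin minorization for $\tilde P_{t_0}$ directly on the $\eta$-scale: from (A1) you get $\tilde P_{t_0}(x,\cdot)\geq c_1\nu(\eta\,\11_\cdot)/\|\eta\|_\infty$, and from (A2) multiplied by $e^{\lambda_0 t}$ and the uniform convergence of Proposition~\ref{prop:2intro} you get $\nu(\eta)\geq c_2\|\eta\|_\infty$, which together give $\tilde P_{t_0}(x,\cdot)\geq c_1c_2\nu^\eta(\cdot)$ with $\nu^\eta=\eta\nu/\nu(\eta)$, and then standard Dobrushin contraction closes the argument. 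Both routes yield the same constant $(1-c_1c_2)^{\lfloor t/t_0\rfloor}$; your approach has the conceptual advantage of showing explicitly that the $Q$-process itself satisfies a Dobrushin minorization condition with reference measure $\nu^\eta$, while the paper's route is more economical because it simply reuses the finite-$T$ estimate. Note also that the paper's statement gives $(1-c_1c_2)^{\lfloor t/t_0\rfloor}\|\mu_1-\mu_2\|_{TV}$ for arbitrary initial laws, which both arguments deliver (the stated factor $2$ comes from $\|\delta_x-\beta\|_{TV}\leq 2$).
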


The study of quasi-stationary distributions goes back to~\cite{Yaglom1947} for branching processes
and~\cite{Darroch1965,Seneta1966,Darroch1967} for Markov chains in finite or denumerable state spaces, satisfying irreducibility
assumptions.
 In these works, the existence and the
convergence to a quasi-stationary distribution are proved using spectral properties of the generator of the absorbed Markov process.
This is also the case for most further works. For example, a extensively developed tool to study birth and death processes is based
on orthogonal polynomials techniques of~\cite{Karlin1957}, applied to quasi-stationary distributions
in~\cite{Good1968,Cavender1978,vanDoorn1991}. For diffusion processes, we can refer to~\cite{Pinsky1985} and more
recently~\cite{CCLMMS09,Cattiaux2008,Littin2012}, all based on the spectral decomposition of the generator. Most of these works only
study one-dimensional processes, whose reversibility helps for the spectral decomposition. Processes in higher dimensions were
studied either assuming self-adjoint generator in~\cite{Cattiaux2008}, or using abstract criteria from spectral theory like
in~\cite{Pinsky1985,Collet2011a} (the second one in infinite dimension). Other formulations in terms of abstract spectral theoretical
criteria were also studied in~\cite{KnoblochPartzsch2010}. The reader can refer to~\cite{MV12,collet-martinez-al-13b,vanDoorn2013}
for introductory presentations of the topic.

Most of the previously cited works do not provide convergence results nor estimates on the speed of convergence. The articles
studying these questions either assume abstract conditions which are very difficult to check in
practice~\cite{Seneta1966,KnoblochPartzsch2010}, or prove exponential convergence for very weak
norms~\cite{CCLMMS09,Cattiaux2008,Littin2012}.

More probabilistic methods were also developed. The older reference is based on a renewal technique~\cite{Ferrari1995} and proves the
existence and convergence to a quasi-stationary distribution for discrete processes for which Assumption~(A1) is not satisfied. More
recently, one-dimensional birth and death processes with a unique quasi-stationary distribution have been shown to
satisfy~\eqref{eq:QLD} with uniform convergence in total variation~\cite{Martinez-Martin-Villemonais2012}. Convergence in total
variation for processes in discrete state space satisfying strong mixing conditions was obtained in~\cite{CloezThai2013} using Fleming-Viot particle
systems whose empirical distribution approximates conditional distributions~\cite{Villemonais2014}. 
Sufficient conditions for exponential convergence of conditioned systems in discrete time can be found in~\cite{DelMoral2013} with applications of discrete generation particle techniques in signal processing, statistical machine learning, and quantum physics. We also refer the reader to~\cite{DelMoral-Doucet2004,DelMoral-Guionnet2000,DelMoral-Miclo2000b} 
for approximations techniques of non absorbed trajectories in terms of genealogical trees.

In this work, we obtain in Section~\ref{sec:results-QSD} necessary and sufficient conditions for exponential convergence to a
unique quasi-stationary distribution for general (virtually any) Markov processes (we state a stronger form of Theorem~\ref{thm:QSD}). We also
obtain spectral properties of the infinitesimal generator as a corollary of our main result. 
Our non-spectral approach and results fundamentally differ from all the previously cited references,
except~\cite{Martinez-Martin-Villemonais2012,CloezThai2013} which only focus on very specific cases. In
Section~\ref{sec:results-Q-process}, we show, using penalisation techniques~\cite{Roynette_Vallois_Yor_2006}, that the same
conditions are sufficient to prove the existence of the $Q$-process and its exponential ergodicity, uniformly in total variation.
This is the first general result showing the link between quasi-stationary distributions and $Q$-processes, since we actually prove
that, for general Markov processes, the uniform exponential convergence to a quasi-stationary distribution implies the existence and
ergodicity of the $Q$-process.

Section~\ref{sec:application} is devoted to applications of the previous results to specific examples of processes. Our goal is not
to obtain the most general criteria, but to show how Assumption~(A) can be checked in different practical situations. We first obtain
necessary and sufficient conditions for one-dimensional birth and death processes with catastrophe in Section~\ref{sec:PNM-1d}. We
show next how the method of the proof can be extended to treat several multi-dimensional examples in
Section~\ref{sec:PNM-applications}. One of these examples is infinite-dimensional (as in~\cite{Collet2011a}) and assumes Brownian
mutations in a continuous type space. Our last example is the neutron transport process in a bounded domain, absorbed at the boundary
(Section~\ref{sec:diffusion}). This example belongs to the class of piecewise-deterministic Markov processes, for which up to our
knowledge no results on quasi-stationary distributions are known. In this case, the absorption rate is unbounded, in the sense that
the absorption time cannot be stochastically dominated by an exponential random variable with constant parameter. Other examples of
Markov processes with unbounded absorption rate can be studied thanks to Theorem~\ref{thm:QSD_full}. For example, the study of
diffusion processes on $\mathbb{R}^+$ absorbed at $0$, or on $\mathbb{R}_+^d$, absorbed at $0$ or
$\mathbb{R}_+^d\setminus(\mathbb{R}_+^*)^d$ is relevant for population dynamics (see for example~\cite{CCLMMS09,Cattiaux2008}) and is
studied in~\cite{champagnat-villemonais-14b}. More generally, the great diversity of applications of the very similar probabilistic criterion for
processes without absorption (see all the works building on~\cite{Meyn2009}) indicates the wide range of applications and 
extensions of our criteria that can be expected.

The paper ends with the proof of the main results of Section~\ref{sec:results-QSD} and~\ref{sec:results-Q-process} in
Sections~\ref{sec:QSD} and~\ref{sec:proofs-Q-process}.



\section{Existence and uniqueness of a quasi-stationary distribution}
\label{sec:results-QSD}

\subsection{Assumptions}
We begin with some comments on Assumption (A).

When $E$ is a Polish space, Assumption~(A1) implies that $X_t$ comes back fast in compact sets from any initial conditions. Indeed,
there exists a compact set $K$ of $E$ such that $\nu(K)>0$ and therefore, $\inf_{x\in E}\PP_x(\tau_{K\cup\{\partial\}}<t_0)>0$, where
$\tau_{K\cup\{\partial\}}$ is the first hitting time of $K\cup\{\partial\}$ by $X_t$. When $E=(0,+\infty)$ or $\mathbb{N}$ and
$\partial=0$, this is implied by the fact that the process $X$ \emph{comes down from infinity}~\cite{CCLMMS09} (see
Section~\ref{sec:discrete} for the discrete case). 

Assumption~(A2) means that the highest non-absorption probability among all initial points in $E$
has the same order of magnitude as the non-absorption probability starting from distribution
$\nu$. Note also that (A2) holds true when, for some $A\in {\cal E}$ such that $\nu(A)>0$ and some $c'_2>0$,
\begin{align*}
\inf_{y\in A}\PP_y\left(t<\tau_\partial\right)\geq c'_2 \sup_{x\in E}\PP_x\left(t<\tau_\partial\right).
\end{align*}

We now introduce the apparently weaker assumption (A') and the stronger assumption (A''), proved to be equivalent in Theorem~\ref{thm:QSD_full} below.
\paragraph{Assumption~(A')}	
	There exists a family of probability measures $(\nu_{x_1,x_2})_{x_1,x_2\in E}$ on $E$ such that,
\begin{itemize}
\item[(A$1'$)] there exists $t_0,c_1>0$ such that, for all $x_1,x_2\in E$,
  $$
  \PP_{x_i}(X_{t_0}\in\cdot\mid t_0<\tau_\partial)\geq c_1\nu_{x_1,x_2}(\cdot)\text{ for }i=1,2;
  $$
\item[(A$2'$)] there exist a constant $c_2>0$ such that for all $x_1,x_2\in E$ and $t\geq 0$,
  $$
  \PP_{\nu_{x_1,x_2}}(t<\tau_\partial)\geq c_2\sup_{x\in E}\PP_x(t<\tau_\partial).
  $$
\end{itemize}

\paragraph{Assumption~(A'')}	
Assumption (A1) is satisfied and
\begin{description}
\item[\textmd{(A$2''$)}] for any probability measure $\mu$ on $E$,
the constant $c_2(\mu)$ defined by
\begin{align*}
c_2(\mu):=  \inf_{t\geq0,\,\rho\in {\cal M}_1(E)}\frac{\PP_{\mu}(t<\tau_\partial)}{\PP_\rho(t<\tau_\partial)}
\end{align*}
is positive, where ${\cal M}_1(E)$ is the set of probability measures on $E$.
   \end{description}

\subsection{Results}

The next result is a detailed version of Theorem~\ref{thm:QSD}.

\begin{thm}
  \label{thm:QSD_full}
The following conditions \textup{(i)--(vi)} are equivalent.
\begin{description}
\item[\textmd{(i)}] Assumption \textup{(A)}.
\item[\textmd{(ii)}] Assumption \textup{(A')}.
\item[\textmd{(iii)}] Assumption \textup{(A'')}.
\item[\textmd{(iv)}] There exist a probability measure $\alpha$ on $E$ and two constants $C,\gamma>0$ such that, for all initial distribution
$\mu$ on $E$,
\begin{align}
  \label{eq:expo-cv}
  \left\|\PP_\mu(X_t\in\cdot\mid t<\tau_\partial)-\alpha(\cdot)\right\|_{TV}\leq C e^{-\gamma t},\ \forall t\geq 0.
\end{align}
\item[\textmd{(v)}] There exist a probability measure $\alpha$ on $E$ and two constants $C,\gamma>0$ such that, for all $x\in E$,
\begin{align*}
  \left\|\PP_x(X_t\in\cdot\mid t<\tau_\partial)-\alpha(\cdot)\right\|_{TV}\leq C e^{-\gamma t},\ \forall t\geq 0.
\end{align*}
\item[\textmd{(vi)}] There exists a probability measure $\alpha$ on $E$ such that
\begin{align}
\label{eq:epsilon-t}
\int_0^\infty  \sup_{x\in E}\left\|\PP_x(X_t\in\cdot\mid t<\tau_\partial)-\alpha(\cdot)\right\|_{TV}dt<\infty.
\end{align}
\end{description} 
  
  In this case, $\alpha$ is the unique quasi-stationary distribution for the process. In addition, if Assumption (A') is satisfied, then~\textup{(iv)} holds with the explicit bound
  \begin{align}
  \label{eq:expo-cv-explicit}
  \left\|\PP_\mu(X_t\in\cdot\mid t<\tau_\partial)-\alpha(\cdot)\right\|_{TV}\leq
  2(1-c_1c_2)^{\lfloor t/t_0\rfloor}.
  \end{align}
\end{thm}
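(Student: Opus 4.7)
The plan is to prove a cycle of implications, starting with the easy directions and culminating in the technical heart of the theorem. Several links are immediate: (i) $\Rightarrow$ (ii) by taking $\nu_{x_1,x_2}=\nu$; (iv) $\Rightarrow$ (v) by restriction to Diracs; (iv) $\Rightarrow$ (vi) by integration of the exponential decay; and (iii) $\Rightarrow$ (i) by choosing $\nu$ from (A1) and applying (A$2''$) with $\mu=\nu,\ \rho=\delta_x$ to recover (A2). The converse (v) $\Rightarrow$ (iv) will follow from the mixture identity
\[
\PP_\mu(X_t\in\cdot\mid t<\tau_\partial) = \int_E \frac{\PP_x(t<\tau_\partial)\,\mu(dx)}{\int_E\PP_y(t<\tau_\partial)\,\mu(dy)}\,\PP_x(X_t\in\cdot\mid t<\tau_\partial),
\]
which, combined with the triangle inequality, transports a uniform-in-$x$ TV estimate to general $\mu$.

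The core of the proof is the implication (ii) $\Rightarrow$ (iv) together with the explicit rate~\eqref{eq:expo-cv-explicit}. I will view $\Phi_t\mu:=\PP_\mu(X_t\in\cdot\mid t<\tau_\partial)=\mu P_t^E/\mu(P_t^E \mathbf{1}_E)$, where $P_t^E$ denotes the sub-Markovian semigroup of the process killed at $\partial$, as a \emph{nonlinear} semigroup; the identity $\Phi_{s+t}=\Phi_s\circ\Phi_t$ follows from Chapman--Kolmogorov for $P_t^E$ because the normalizing denominators cancel in the composition. The target is the Dobrushin-type one-step contraction
\[
\|\Phi_{t_0}\mu_1-\Phi_{t_0}\mu_2\|_{TV}\leq (1-c_1c_2)\,\|\mu_1-\mu_2\|_{TV},
\]
which when iterated $\lfloor t/t_0\rfloor$ times makes $(\Phi_{nt_0}\mu)_n$ a Cauchy sequence in TV whose limit $\alpha$ is independent of $\mu$; by continuity and the semigroup identity, $\alpha$ is a fixed point of $\Phi_t$ for every $t$, hence the unique quasi-stationary distribution, and~\eqref{eq:expo-cv-explicit} drops out directly.

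To establish the one-step contraction, I write the Hahn--Jordan decomposition $\mu_1-\mu_2=s(\tilde\mu_+-\tilde\mu_-)$ with $s=\|\mu_1-\mu_2\|_{TV}/2$ and $\tilde\mu_\pm$ probability measures, and expand
\[
\Phi_{t_0}\mu_1-\Phi_{t_0}\mu_2 = \frac{\mu_2(h)\,\mu_1 P_{t_0}^E - \mu_1(h)\,\mu_2 P_{t_0}^E}{\mu_1(h)\,\mu_2(h)}, \qquad h(x):=\PP_x(t_0<\tau_\partial).
\]
For each pair $(x_1,x_2)\in\operatorname{supp}(\tilde\mu_+)\times\operatorname{supp}(\tilde\mu_-)$, (A$1'$) supplies a probability measure $\nu_{x_1,x_2}$ minorizing both $\Phi_{t_0}\delta_{x_i}$ up to the factor $c_1$; substituting this into the numerator and then invoking (A$2'$) in the form $\PP_{\nu_{x_1,x_2}}(t_0<\tau_\partial)\geq c_2\sup_y h(y)$ to tame the denominators, a careful accounting of the cross terms should produce precisely the factor $1-c_1c_2$. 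I expect this last step to be the main obstacle: a naive argument using only (A$1'$) yields the weaker universal bound $\|\Phi_{t_0}\mu_1-\Phi_{t_0}\mu_2\|_{TV}\leq 2(1-c_1)$, which is not a contraction of $\|\mu_1-\mu_2\|_{TV}$, so the additional factor $c_2$ must be extracted by tracking how the pairwise minorizations interact with the product measure $\tilde\mu_+\otimes\tilde\mu_-$.

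To close the cycle, (vi) $\Rightarrow$ (iii) uses integrability to find $t_0$ for which $\sup_x\|\PP_x(X_{t_0}\in\cdot\mid t_0<\tau_\partial)-\alpha\|_{TV}$ is arbitrarily small, forcing $\alpha$ to be quasi-stationary and giving $\PP_\alpha(t<\tau_\partial)=e^{-\lambda_0 t}$. Uniform closeness of these conditional distributions to $\alpha$, combined with the Markov property, then yields the ratio bound (A$2''$), while (A1) is extracted by iterating $\Phi_{t_0}$ once more to produce a common minorizing measure for all starting points.
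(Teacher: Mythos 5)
Your overall map of the easy implications is correct, and you have rightly identified (ii)\,$\Rightarrow$\,(iv) as the technical heart. However, the mechanism you propose for it has a genuine gap. You want to iterate the \emph{nonlinear} map $\Phi_{t_0}\mu=\PP_\mu(X_{t_0}\in\cdot\mid t_0<\tau_\partial)$ and prove the one-step bound $\|\Phi_{t_0}\mu_1-\Phi_{t_0}\mu_2\|_{TV}\le(1-c_1c_2)\|\mu_1-\mu_2\|_{TV}$. This estimate is false in general. Because $\Phi_{t_0}\mu$ is a \emph{re-weighted} convex combination of the Diracs $\Phi_{t_0}\delta_x$ with weights proportional to $h(x)=\PP_x(t_0<\tau_\partial)$, a tiny perturbation of $\mu$ near a point $x$ with small $h(x)$ can produce an $O(1)$ change in the weights and hence in $\Phi_{t_0}\mu$, regardless of how small $\|\mu_1-\mu_2\|_{TV}$ is. Concretely, with $\mu_1=\delta_a$, $\mu_2=(1-\epsilon)\delta_a+\epsilon\delta_b$, $h(a)=\epsilon^2$, $h(b)=1$, one gets $\|\mu_1-\mu_2\|_{TV}=2\epsilon$ but $\Phi_{t_0}\mu_2\approx\Phi_{t_0}\delta_b$, so no Lipschitz-type contraction can hold. (This is precisely why Corollary~\ref{cor:1} in the paper has the extra factor $1/(c_2(\mu_1)\wedge c_2(\mu_2))$.) Moreover, applying (A$2'$) only at time $t_0$ to ``tame the denominators,'' as you propose, uses (A$2'$) at a single time scale, whereas the essential content of (A$2'$) is that the minorizing measure has good survival \emph{at all later times $t-t_0$}.

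The fix — and the idea you are missing — is to condition on survival up to a fixed \emph{distant} horizon $T$ and work with the resulting time-inhomogeneous, genuinely \emph{linear} Markov kernels $R^T_{s,t}f(x)=\EE(f(X_t)\mid X_s=x,\ T<\tau_\partial)$. Step 1 of the paper shows, combining (A$1'$) (short-term minorization) with (A$2'$) \emph{evaluated at time $t-1$} (survival of the minorizing measure over the remaining horizon), that $\delta_{x_i}R^T_{s,s+1}\ge c_1c_2\,\nu^{T-s}_{x_1,x_2}$, with the improved constant $c_1c_2$. Since $R^T_{s,s+1}$ is a linear kernel, the standard Dobrushin argument then gives $\|\mu_1R^T_{s,s+1}-\mu_2R^T_{s,s+1}\|_{TV}\le(1-c_1c_2)\|\mu_1-\mu_2\|_{TV}$, and iterating over $s=0,1,\ldots,T-1$ yields $\|\delta_xR^T_{0,T}-\delta_yR^T_{0,T}\|_{TV}\le 2(1-c_1c_2)^{\lfloor T\rfloor}$, i.e.\ the claim for Diracs. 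The extension to general $\mu_1,\mu_2$ is then a separate (and not a contractive) step using the mixture identity you wrote. Your nonlinear semigroup identity $\Phi_{s+t}=\Phi_s\circ\Phi_t$ is correct (it is \eqref{eq:semigroup2} in the paper) but it is used later, to show the limit is a QSD, not to obtain the contraction. Finally, your sketch of (vi)\,$\Rightarrow$\,(iii) is also too thin: the boundedness of $t\mapsto e^{\lambda_0 t}\sup_x\PP_x(t<\tau_\partial)$ is obtained via the sub-multiplicative inequality \eqref{eq:psi} and the integrability \eqref{eq:epsilon-t}, and (A1) requires the explicit infimum-measure construction of Lemma~\ref{le:3} applied to $(\delta_x R^{2t}_{0,2t})_{x\in E}$ — neither of which follows from just ``iterating $\Phi_{t_0}$ once more.''
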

\noindent This result and the others of this section are proved in Section~\ref{sec:QSD}.

One can expect that the constant $C$ in (iv) might depend on $\mu$ proportionally to $\|\mu-\alpha\|_{TV}$. This is indeed the case,
but with a constant of proportionality depending on $c_2(\mu)$, defined in Assumption~(A$2$''), as shown by the following result.
\begin{cor}
\label{cor:1}
Hypotheses (i--vi) imply that, for all probability measures $\mu_1,\mu_2$ on $E$, and for all $t>0$,
\begin{align*}
 \left\|\PP_{\mu_1}(X_t\in\cdot\mid t<\tau_\partial)-\PP_{\mu_2}(X_t\in\cdot\mid t<\tau_\partial)\right\|_{TV}\leq \frac{(1-c_1 c_2)^{\lfloor t/t_0\rfloor}}{c_2(\mu_1)\wedge c_2(\mu_2)}\|\mu_1-\mu_2\|_{TV}.
\end{align*}
\end{cor}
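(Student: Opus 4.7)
The first step is to decompose the two measures along their common and disjoint parts. Setting $p = \|\mu_1-\mu_2\|_{TV}/2$, the inequality is trivial if $p = 0$, so assume $p > 0$. Using the Hahn decomposition, set $\sigma_i := (\mu_1-\mu_2)^{\pm}/p$ (normalized positive and negative parts) and $\nu := (\mu_1\wedge\mu_2)/(1-p)$ (or any probability measure if $p=1$), obtaining three probability measures on $E$ with $\mu_i = (1-p)\nu + p\sigma_i$. By the linearity in $\mu$ of the sub-probability measure $R^\mu_t(\cdot) := \PP_\mu(X_t\in\cdot,\,t<\tau_\partial)$, the conditional law $\mu_i^t := \PP_{\mu_i}(X_t\in\cdot\mid t<\tau_\partial) = R^{\mu_i}_t/R^{\mu_i}_t(E)$ is then itself a convex combination:
\[
\mu_i^t = \lambda_i\,\nu^t + (1-\lambda_i)\,\sigma_i^t, \qquad \lambda_i := \frac{(1-p)\PP_\nu(t<\tau_\partial)}{\PP_{\mu_i}(t<\tau_\partial)}, \quad 1-\lambda_i = \frac{p\,\PP_{\sigma_i}(t<\tau_\partial)}{\PP_{\mu_i}(t<\tau_\partial)}.
\]

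Assuming (WLOG by symmetry) $\lambda_1\le\lambda_2$, a direct rearrangement gives
\[
\mu_1^t - \mu_2^t = (\lambda_1-\lambda_2)(\nu^t - \sigma_1^t) + (1-\lambda_2)(\sigma_1^t - \sigma_2^t).
\]
The key input is the uniform contraction $\|\eta_1^t - \eta_2^t\|_{TV} \le 2(1-c_1c_2)^{\lfloor t/t_0\rfloor}$ valid for any two probability measures $\eta_1,\eta_2$ on $E$: this is precisely what the proof of Theorem~\ref{thm:QSD_full} actually delivers, statement (iv) being the special case $\eta_2 = \alpha$. Applying it to the pairs $(\nu,\sigma_1)$ and $(\sigma_1,\sigma_2)$ and taking total-variation norms, the coefficients telescope as $(\lambda_2-\lambda_1) + (1-\lambda_2) = 1-\lambda_1$, yielding
\[
\|\mu_1^t - \mu_2^t\|_{TV} \le 2(1-c_1c_2)^{\lfloor t/t_0\rfloor}\,(1-\lambda_1).
\]
The symmetric case $\lambda_2\le\lambda_1$, handled by the analogous rearrangement with $(1-\lambda_1)(\sigma_1^t - \sigma_2^t)$, produces the same bound with $1-\lambda_2$; combining, one gets $\|\mu_1^t-\mu_2^t\|_{TV}\le 2(1-c_1c_2)^{\lfloor t/t_0\rfloor}\max(1-\lambda_1,1-\lambda_2)$ uniformly.

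To conclude, I would invoke condition (iii) of Theorem~\ref{thm:QSD_full}, which holds by hypothesis: $\PP_{\mu_i}(t<\tau_\partial) \ge c_2(\mu_i)\,\PP_{\sigma_i}(t<\tau_\partial)$, so that $1-\lambda_i \le p/c_2(\mu_i)$ and hence $\max(1-\lambda_1,1-\lambda_2) \le p/(c_2(\mu_1)\wedge c_2(\mu_2))$. Substituting $2p = \|\mu_1-\mu_2\|_{TV}$ gives the announced inequality. The only delicate step is the reliance on the \emph{uniform} bound $\|\eta_1^t - \eta_2^t\|_{TV}\le 2(1-c_1c_2)^{\lfloor t/t_0\rfloor}$ rather than on a naive triangle through $\alpha$ (which would cost an extra factor $2$); this requires reading the proof of Theorem~\ref{thm:QSD_full}, not merely its conclusion. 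Everything else is just linear algebra on measures.
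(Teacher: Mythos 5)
Your proof is correct, and it follows a genuinely different decomposition than the paper's, while resting on the same key ingredient (the uniform two-measure contraction $\|\eta_1^t-\eta_2^t\|_{TV}\leq 2(1-c_1c_2)^{\lfloor t/t_0\rfloor}$, which is Step~3 of the proof of Theorem~\ref{thm:QSD_full}, not the weaker statement (iv) itself). The paper manipulates the difference of \emph{evolved} conditional measures directly: it writes
\[
\frac{\mu_1 P_t}{\mu_1 P_t\11_E}-\frac{\mu_2 P_t}{\mu_2 P_t\11_E}=\alpha_1 P_t-\alpha_2 P_t,
\]
where $\alpha_1=\frac{(\mu_1-\mu_2)_+}{\mu_1 P_t\11_E}$ and $\alpha_2=\frac{(\mu_2-\mu_1)_+}{\mu_2 P_t\11_E}+\bigl(\frac{1}{\mu_2 P_t\11_E}-\frac{1}{\mu_1 P_t\11_E}\bigr)(\mu_1\wedge\mu_2)$ are positive measures with $\alpha_1 P_t\11_E=\alpha_2 P_t\11_E$ (under the WLOG $\P_{\mu_1}(t<\tau_\d)\geq\P_{\mu_2}(t<\tau_\d)$), and then applies the contraction \emph{once} to the normalized $\alpha_i$, concluding via $\alpha_1 P_t\11_E\leq\|\mu_1-\mu_2\|_{TV}/(2c_2(\mu_1))$. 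You instead decompose the \emph{initial} measures $\mu_i=(1-p)\nu+p\sigma_i$ upstream, observe that conditioning preserves this as a convex combination with weights $\lambda_i,1-\lambda_i$, and telescope \emph{two} applications of the contraction (to $(\nu,\sigma_1)$ and $(\sigma_1,\sigma_2)$) so that the coefficients sum to $1-\lambda_{i^*}$ for $i^*=\arg\min\lambda_i$; then $1-\lambda_{i^*}\leq p/c_2(\mu_{i^*})$ from (A$2''$). Your route is arguably more transparent because it makes the role of the overlap $\mu_1\wedge\mu_2$ explicit and avoids the unmotivated correction term in $\alpha_2$, at the cost of invoking the contraction twice; both yield the stated constant with no loss. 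Your remark that the bound requires the internal contraction estimate rather than a triangle through $\alpha$ (which would lose a factor~$2$) is also exactly right.
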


\begin{rem}
  \label{rem:explicit-constant-c_2}
  It immediately follows from~\eqref{eq:alphaPtbis} and (A'') that
  \begin{align}
  \label{eq:youpi2}
    e^{-\lambda_0 t}\leq  \sup_{\rho\in{\cal M}_1(E)}\P_{\rho}(t<\tau_\d) \leq   \frac{e^{-\lambda_0 t}}{c_2(\alpha)}.
  \end{align}
  In the proof of Theorem~\ref{thm:QSD_full}, we actually prove that one can take
  \begin{align}
    \label{eq:ineg-c2-de-alpha}
    c_2(\alpha)= \sup_{s>0} \exp\left(
      -\lambda_0s-\frac{Ce^{(\lambda_0-\gamma)s}}{1-e^{-\gamma s}}
    \right),
  \end{align}
  where $C$ and $\gamma$ satisfy~\eqref{eq:expo-cv}.
\end{rem}

\begin{rem}
  \label{rem:meyn-tweedie}
  In the case of Markov processes without absorption, Meyn and Tweedie~\cite[Chapter~16]{Meyn2009} give several equivalent criteria for the
  exponential ergodicity with respect to $\|\cdot\|_{TV}$, among which are unconditioned versions of (iv) and (v). The last results can be interpreted as an extension of these criteria to
  conditioned processes. Several differences remain.
  \begin{enumerate}
  \item In the case without absorption, the equivalence between 
   the unconditioned versions of criteria (iv) and (v)  is obvious. In our case, the proof is not immediate.
  \item In the case without absorption, the unconditioned version of  criterion (vi)  can be replaced by the weaker assumption $\sup_{x\in
      E}\|\P_x(X_t\in\cdot)-\alpha\|_{TV}\rightarrow 0$ when $t\rightarrow +\infty$. Whether (vi) can be improved in such a way
    remains an open problem in general. However, if one assumes that there exists $c'_2>0$ such that for all $t\geq 0$,
    \begin{align*}
      \inf_{\rho\in{\cal M}_1(E)}\P_\rho(t<\tau_\d)\geq c'_2 \sup_{\rho\in{\cal M}_1(E)}\P_\rho(t<\tau_\d),
    \end{align*}
    then one can adapt the arguments of Corollary~\ref{cor:1} to prove that
    \begin{align*}
      \sup_{x\in E}\left\|\PP_x(X_t\in\cdot\mid t<\tau_\partial)-\alpha(\cdot)\right\|_{TV}\xrightarrow[t\rightarrow\infty]{} 0
    \end{align*}
    implies (i--vi).
  \item The extension to quasi-stationary distributions of criteria based on Lyapunov functions as in~\cite{Meyn2009} requires a
    different approach because the survival probability and conditional expectations can not be expressed easily in terms of the
    infinitesimal generator.
  \item In the irreducible case, there is a weaker alternative to the Dobrushin-type criterion of Hypothesis~(A1) known as Doeblin's
    condition: there exist $\mu\in{\cal M}_1(E)$, $\varepsilon<1$, $t_0,\delta>0$ such that, for all measurable set $A$ satisfying
    $\mu(A)>\varepsilon$,
    \begin{align*}
      \inf_{x\in E}\P_x(X_{t_0}\in A)\geq \delta.
    \end{align*}
    It is possible to check that the conditional version of this criterion implies the existence of a probability measure $\nu\neq \mu$ such that  (A1) is satisfied.
    Unfortunately $\nu$ is far from being explicit in this case and (A2), which must involve the measure $\nu$, is no more a tractable condition, unless one can prove directly (A$2$'') instead of (A2).
  \end{enumerate}
\end{rem}

\medskip \noindent It is well known (see~\cite{MV12}) that when $\alpha$ is a quasi-stationary distribution, there exists $\lambda_0>0$ such that, for all $t\geq 0$,
\begin{align}
\label{eq:alphaPt}
\PP_{\alpha}(t<\tau_{\partial})=e^{-\lambda_0 t}\quad\text{and}\quad e^{\lambda_0 t}\alpha P_t=\alpha.
\end{align}
The next result is a detailed version of Proposition~\ref{prop:2intro}.
\begin{prop}
\label{prop:2}
There exists a non-negative function $\eta$ on $E\cup\{\d\}$, positive on $E$ and vanishing on $\d$, defined by
\begin{align*}
\eta(x)=\lim_{t\rightarrow\infty} \frac{\P_x(t<\tau_\d)}{\P_\alpha(t<\tau_\d)}=\lim_{t\rightarrow +\infty} e^{\lambda_0 t}\P_x(t<\tau_\d),
\end{align*}
where the convergence holds for the uniform norm on $E\cup\{\d\}$ and $\alpha(\eta)=1$. Moreover, the function $\eta$ is bounded, belongs to the domain of the infinitesimal generator $L$ of the semi-group $(P_t)_{t\geq 0}$ on $({\cal B}(E\cup\{\d\}),\|\cdot\|_\infty)$ and
\begin{align*}
L\eta=-\lambda_0\eta.
\end{align*}
\end{prop}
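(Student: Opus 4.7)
The plan is to define $a_t(x) := e^{\lambda_0 t}\P_x(t<\tau_\d)$ for $x\in E\cup\{\d\}$ (with $a_t(\d)=0$), show that $(a_t)_{t\geq 0}$ is Cauchy in uniform norm as $t\to\infty$, and identify the limit $\eta$ as an eigenfunction of the semi-group.

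First I would establish the uniform-in-$t$ bound $\|a_t\|_\infty\leq M:=1/c_2(\alpha)$. This follows from \eqref{eq:youpi2}, or directly from the definition of $c_2(\alpha)$ in Assumption~(A$''$) combined with $\P_\alpha(t<\tau_\d)=e^{-\lambda_0 t}$ from~\eqref{eq:alphaPt}; note that (A$''$) is available since Theorem~\ref{thm:QSD_full} establishes its equivalence with (A). Next, the Markov property together with the normalisation $\alpha(a_s)=e^{\lambda_0 s}\P_\alpha(s<\tau_\d)=1$ gives the key identity
\begin{align*}
a_{t+s}(x)-a_t(x) = a_t(x)\int_E a_s(y)\,\bigl[\P_x(X_t\in\cdot\mid t<\tau_\d)-\alpha\bigr](dy),
\end{align*}
so that the exponential convergence~\eqref{eq:expo-cv} combined with $\|a_s\|_\infty\leq M$ yields
\begin{align*}
\|a_{t+s}-a_t\|_\infty \leq M^2 C e^{-\gamma t}.
\end{align*}
Hence $(a_t)$ is uniformly Cauchy and converges uniformly to some bounded non-negative $\eta$ with $\eta(\d)=0$. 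Dominated convergence gives $\alpha(\eta)=\lim_t\alpha(a_t)=1$, and positivity of $\eta$ on $E$ follows from $a_t(x)\geq c_2(\delta_x)>0$ (applying the definition of $c_2(\delta_x)$ with $\rho=\alpha$).

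Finally, I would derive the eigenrelation $P_t\eta=e^{-\lambda_0 t}\eta$ by passing to the limit $s\to\infty$ in the exact identity
\begin{align*}
\E_x[a_s(X_t)] = e^{\lambda_0 s}\P_x(t+s<\tau_\d) = e^{-\lambda_0 t}\,a_{t+s}(x),
\end{align*}
which is licit by the uniform convergence of $a_{t+s}$ to $\eta$. This yields at once $\|P_t\eta-\eta\|_\infty=(1-e^{-\lambda_0 t})\|\eta\|_\infty\to 0$ and $\|t^{-1}(P_t\eta-\eta)+\lambda_0\eta\|_\infty\to 0$ as $t\to 0$, so $\eta$ lies in the domain of $L$ with $L\eta=-\lambda_0\eta$.

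The only delicate step is the uniform bound $\|a_t\|_\infty\leq M$, which relies on the nontrivial equivalence (i)$\Leftrightarrow$(iii) of Theorem~\ref{thm:QSD_full}; once this is secured, the Cauchy property of $(a_t)$ and the eigenrelation follow cleanly from the Markov property and the exponential convergence to $\alpha$.
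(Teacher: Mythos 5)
Your proof is correct and follows essentially the same route as the paper's: the same uniform bound $1/c_2(\alpha)$ from (A$''$), the same key identity derived from the Markov property and the normalisation $\alpha(\eta_s)=1$ (leading to the same Cauchy estimate $M^2Ce^{-\gamma t}$), and the same passage to the limit in the Markov identity to obtain $P_t\eta=e^{-\lambda_0 t}\eta$ and hence $L\eta=-\lambda_0\eta$.
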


\noindent In the irreducible case, exponential ergodicity is known to be related to a spectral gap property (see for instance~\cite{ledoux2005concentration}). Our results imply a similar property under the assumptions (i--vi) for the infinitesimal generator $L$ of the semi-group on $({\cal B}(E\cup\{\d\}),\|\cdot\|_{\infty})$.

\begin{cor}
\label{cor:2}
If $f\in {\cal B}(E\cup\{\d\})$ is a right eigenfunction for $L$ for an eigenvalue $\lambda$, then either  
\begin{enumerate}[itemsep=0pt]
\item $\lambda=0$ and $f$ is constant,
\item or $\lambda=-\lambda_0$
and $f=\alpha(f)\eta$,
\item or $\lambda\leq -\lambda_0-\gamma$, $\alpha(f)=0$ and $f(\d)=0$.
\end{enumerate}
\end{cor}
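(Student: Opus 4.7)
The plan is to exploit the eigenfunction identity $P_t f = e^{\lambda t} f$ for all $t\ge 0$ and pass to the limit $t\to\infty$, relying on the two convergence results already established: the uniform exponential convergence of conditional distributions to $\alpha$ at rate $\gamma$ (Theorem~\ref{thm:QSD_full}(iv)), and the uniform convergence $e^{\lambda_0 t}\P_x(t<\tau_\d)\to\eta(x)$ (Proposition~\ref{prop:2}).

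I would first isolate the behaviour at $\d$. Since $X_t=\d$ for every $t\ge\tau_\d$, one has $P_t f(\d)=f(\d)$ for all $t\ge 0$, which combined with $P_t f(\d)=e^{\lambda t}f(\d)$ forces $f(\d)=0$ whenever $\lambda\neq 0$. In the case $\lambda=0$, the identity $P_t f\equiv f$ together with $\tau_\d<\infty$ a.s.\ and dominated convergence yields $f(x)=f(\d)$ for every $x\in E$, so $f$ is constant; this is case~(1).

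Next, under $\lambda\neq 0$ (hence $f(\d)=0$), for every $x\in E$ the eigenvalue equation reads
\begin{align*}
e^{\lambda t}f(x)=\E_x[f(X_t)\,\11_{t<\tau_\d}]=\P_x(t<\tau_\d)\,\E_x\bigl[f(X_t)\bigm| t<\tau_\d\bigr],
\end{align*}
which I would rearrange as
\begin{align*}
f(x)=e^{-(\lambda+\lambda_0)t}\,\bigl[e^{\lambda_0 t}\P_x(t<\tau_\d)\bigr]\,\E_x\bigl[f(X_t)\bigm| t<\tau_\d\bigr].
\end{align*}
By Proposition~\ref{prop:2} the bracketed factor tends to $\eta(x)$ uniformly in $x$, while applying Theorem~\ref{thm:QSD_full}(iv) to the bounded function $f$ gives $|\E_x[f(X_t)\mid t<\tau_\d]-\alpha(f)|\le 2C\|f\|_\infty e^{-\gamma t}$ uniformly in $x$. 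I would then split cases on the sign of $\lambda+\lambda_0$. If $\lambda+\lambda_0>0$, the prefactor decays to $0$ while the other factors remain bounded, forcing $f\equiv 0$. If $\lambda+\lambda_0=0$, the right-hand side converges to $\eta(x)\alpha(f)$, which is case~(2). If $\lambda+\lambda_0<0$, the prefactor blows up; since $\eta(x)>0$ on $E$, compatibility with a finite $f(x)$ first rules out $\alpha(f)\neq 0$, and then the rate $e^{-\gamma t}$ gives a uniform bound $|f(x)|\le K\,e^{-(\lambda+\lambda_0+\gamma)t}$, which is compatible with a nonzero $f$ only when $\lambda+\lambda_0+\gamma\le 0$, i.e.\ $\lambda\le-\lambda_0-\gamma$; this, together with $\alpha(f)=0$ and $f(\d)=0$, is case~(3).

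I do not expect a real obstacle: the argument is essentially a careful inspection of the two rates in the eigenvalue equation. The only step that needs a moment's thought is the regime $\lambda+\lambda_0<0$, where one must first exclude $\alpha(f)\neq 0$ using the positivity of $\eta$ on $E$, and only then exploit the exponential rate $\gamma$ to pin down the admissible values of $\lambda$.
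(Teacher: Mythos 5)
Your proof is correct and takes essentially the same route as the paper's: it passes to the limit in the eigenvalue identity $P_t f=e^{\lambda t}f$ using the uniform convergences from Proposition~\ref{prop:2} and Theorem~\ref{thm:QSD_full}, then compares exponential rates. The only cosmetic difference is the order of the case split (you branch on $\lambda$ then on the sign of $\lambda+\lambda_0$; the paper branches on $f(\d)$ then on $\alpha(f)$), but the underlying argument is identical.
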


\section{Existence and exponential ergodicity of the $Q$-process}
\label{sec:results-Q-process}
We now study the behavior of
the $Q$-process. The next result is a detailed version of Theorem~\ref{thm:Q-processintro}.

\begin{thm}
\label{thm:Q-process}
Assumption~(A) implies the three following properties.
\begin{description}
\item[\textmd{(i) Existence of the $Q$-process.}] There exists a family $(\QQ_x)_{x\in E}$ of
probability measures on $\Omega$ defined by
$$
\lim_{t\rightarrow+\infty}\PP_x(A\mid t<\tau_\partial)=\QQ_x(A)
$$
for all ${\cal F}_s$-measurable set $A$.
The process $(\Omega,({\cal F}_t)_{t\geq 0},(X_t)_{t\geq
  0},(\QQ_x)_{x\in E})$ is an $E$-valued homogeneous Markov process. In addition, if $X$ is a strong Markov process under $\P$, then so is $X$ under $\Q$.
\item[\textmd{(ii) Transition kernel.}] The transition kernel of the Markov process $X$ under $(\QQ_x)_{x\in E}$ is given by
\begin{align*}
\tilde{p}(x;t,dy)=e^{\lambda_0 t}\frac{\eta(y)}{\eta(x)}p(x;t,dy).
\end{align*}
 In other words, for all $\varphi\in{\cal B}(E)$ and $t\geq 0$,
\begin{align}
\label{eq:semi-group-Q}
\tilde{P}_t\varphi(x)=\frac{e^{\lambda_0 t}}{\eta(x)}P_t(\eta\varphi)(x)
\end{align}
where $(\tilde{P}_t)_{t\geq 0}$ is the semi-group of $X$ under $\Q$.
\item[\textmd{(iii) Exponential ergodicity.}] The probability measure $\beta$ on $E$ defined by
\begin{align*}
\beta(dx)
=\eta(x)\alpha(dx).
\end{align*}
is the unique invariant distribution of $X$ under $\QQ$. Moreover, for any initial distributions $\mu_1,\mu_2$ on $E$,
\begin{align*}
\left\|\Q_{\mu_1}(X_t\in\cdot)-\Q_{\mu_2}(X_t\in\cdot)\right\|_{TV}\leq (1-c_1c_2)^{\lfloor t/t_0\rfloor}\|\mu_1-\mu_2\|_{TV},
\end{align*}
where $\Q_\mu=\int_E \Q_x\,\mu(dx)$.
\end{description}
\end{thm}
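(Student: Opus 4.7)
The plan is to obtain part (i) by identifying the family $(\QQ_x)_{x\in E}$ as the Doob $h$-transform of the sub-Markov semigroup $(P_t)$ by the harmonic function $\eta$ of Proposition~\ref{prop:2}, which is the penalisation viewpoint of~\cite{Roynette_Vallois_Yor_2006}. For $A\in\mathcal{F}_s$ and $t>s$, the Markov property under $\PP$ yields
\[
\PP_x(A\mid t<\tau_\d) = \frac{\EE_x[\mathbf{1}_A\,\PP_{X_s}(t-s<\tau_\d)]}{\PP_x(t<\tau_\d)}.
\]
Multiplying numerator and denominator by $e^{\lambda_0 t}$ and applying the uniform convergence $e^{\lambda_0 r}\PP_\cdot(r<\tau_\d)\to\eta$ of Proposition~\ref{prop:2}, sending $t\to\infty$ gives
\[
\QQ_x(A):=\frac{e^{\lambda_0 s}}{\eta(x)}\,\EE_x[\mathbf{1}_A\,\eta(X_s)].
\]
Because $L\eta=-\lambda_0\eta$ forces $P_r\eta=e^{-\lambda_0 r}\eta$, the measure $\QQ_x^{(s)}$ defined on $\mathcal{F}_s$ by this formula is a probability measure, and its restriction to $\mathcal{F}_s$ coincides with $\QQ_x^{(t)}$ whenever $t\geq s$. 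A Kolmogorov-type extension on the canonical path space then produces a single probability $\QQ_x$ on $\mathcal{F}_\infty$. The Markov property under $\QQ_x$ is an immediate consequence of the $h$-transform formula and of the Markov property under $\PP$; the strong Markov property descends in the same way after observing that $M_t:=e^{\lambda_0 t}\eta(X_t)\mathbf{1}_{t<\tau_\d}$ is a positive bounded $\PP_x$-martingale, to which optional stopping applies.

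Part (ii) is an immediate corollary: applying the formula of part (i) with $A=\{X_t\in B\}$ reads off $\tilde P_t f(x)=e^{\lambda_0 t}P_t(\eta f)(x)/\eta(x)$.

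For part (iii), I would first verify that $\beta$ is invariant for $(\tilde P_t)$ by plugging the kernel of part (ii) into $\beta(\tilde P_tf)$ and using $\alpha P_t=e^{-\lambda_0 t}\alpha$. The core of the argument is a uniform Doeblin minorization for $\tilde P_{t_0}$. Writing $P_{t_0}^\circ(x,\cdot):=\PP_x(X_{t_0}\in\cdot\mid t_0<\tau_\d)$, a rearrangement gives
\[
\tilde P_{t_0}(x,dy)=\frac{\eta(y)\,P_{t_0}^\circ(x,dy)}{P_{t_0}^\circ\eta(x)}.
\]
Assumption (A1) bounds the numerator below by $c_1\eta(y)\nu(dy)$. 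For the denominator, I would combine the boundedness of $\eta$ (Proposition~\ref{prop:2}) with the ``dual'' inequality $\nu(\eta)\geq c_2\|\eta\|_\infty$, obtained by multiplying both sides of (A2) by $e^{\lambda_0 t}$ and letting $t\to\infty$, to get $P_{t_0}^\circ\eta(x)\leq\|\eta\|_\infty\leq\nu(\eta)/c_2$. Together these give the Doeblin bound
\[
\tilde P_{t_0}(x,\cdot)\geq c_1c_2\,\pi(\cdot),\qquad \pi(dy):=\eta(y)\nu(dy)/\nu(\eta),
\]
which translates into the Dobrushin contraction
\[
\|\mu_1\tilde P_{t_0}-\mu_2\tilde P_{t_0}\|_{TV}\leq(1-c_1c_2)\|\mu_1-\mu_2\|_{TV}.
\]
Iterating $\lfloor t/t_0\rfloor$ times and using that every $\tilde P_r$ is a TV contraction on signed measures of zero mass yields the claimed exponential estimate; uniqueness of $\beta$ is then forced by substituting any other invariant probability for $\mu_1$.

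The main obstacle I anticipate is the derivation of the sharp Doeblin constant $c_1c_2$: one must extract the dual inequality $\nu(\eta)\geq c_2\|\eta\|_\infty$ from (A2) precisely in order to cancel the $x$-dependence of $P_{t_0}^\circ\eta(x)$, and any cruder combination of (A1) and (A2) produces a strictly weaker rate. A secondary delicate point is the strong Markov property under $\QQ$, which requires applying optional stopping to the martingale $M_t$ rather than invoking only the Markov property at deterministic times.
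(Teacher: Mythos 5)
Your parts (i) and (ii) follow essentially the paper's route: the paper packages the same computation as an appeal to the penalisation theorem of Roynette--Vallois--Yor, which directly certifies that $M_s=\mathds{1}_{s<\tau_\d}\,e^{\lambda_0 s}\eta(X_s)/\eta(x)$ is a $\PP_x$-martingale and that $\PP_x(\,\cdot\mid t<\tau_\d)$ converges to $\E_x(M_s\mathds{1}_{\cdot})$ on $\mathcal{F}_s$; your ``by-hand'' derivation via uniform convergence of $e^{\lambda_0 r}\PP_\cdot(r<\tau_\d)\to\eta$ plus consistency of the $\QQ_x^{(s)}$ is the content of that theorem, though your appeal to a Kolmogorov-type extension implicitly assumes a canonical or at least nice path space (something the penalisation theorem handles for you), so in a general $\Omega$ you should be slightly more careful. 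Part (iii) is where you genuinely diverge. The paper does \emph{not} establish a Doeblin minorization for $\tilde{P}_{t_0}$ directly: it instead recycles the already-proved contraction $\|\mu_1 R^T_{0,t}-\mu_2 R^T_{0,t}\|_{TV}\le(1-c_1c_2)^{\lfloor t/t_0\rfloor}\|\mu_1-\mu_2\|_{TV}$ for the inhomogeneous conditioned semigroups $R^T_{s,t}$ from Section 5.1, and then lets $T\to\infty$ via dominated convergence, using that $\delta_x R^T_{0,t}\to\QQ_x(X_t\in\cdot)$. Your alternative is to derive a genuine Doeblin bound $\tilde{P}_{t_0}(x,\cdot)\ge c_1c_2\,\pi(\cdot)$ with $\pi=\eta\,d\nu/\nu(\eta)$, cancelling the $x$-dependence in the denominator $P^\circ_{t_0}\eta(x)$ via the dual inequality $\nu(\eta)\ge c_2\|\eta\|_\infty$ (obtained from (A2) by multiplying by $e^{\lambda_0 t}$ and passing to the limit). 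I checked that this derivation is correct: $\tilde{P}_{t_0}(x,dy)=\eta(y)P^\circ_{t_0}(x,dy)/P^\circ_{t_0}\eta(x)\ge c_1\eta(y)\nu(dy)/(\nu(\eta)/c_2)=c_1c_2\,\pi(dy)$, and iteration with the standard TV-contraction of Markov kernels gives exactly the stated rate $(1-c_1c_2)^{\lfloor t/t_0\rfloor}$. Your route is more self-contained (it makes the Dobrushin structure of the $Q$-process explicit, rather than passing a limit through the inhomogeneous family) and has the pleasant byproduct of exhibiting an explicit small measure $\pi$ for $\tilde{P}$; the paper's route is shorter given that the $R^T$ contraction is already in hand. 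Both give the sharp constant $c_1c_2$.
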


\noindent Note that, as an immediate consequence of Theorem~\ref{thm:QSD_full}, the uniform exponential convergence to a
quasi-stationary distribution implies points (i--iii) of Theorem~\ref{thm:Q-process}.


We investigate now the characterization of the $Q$-process in term of its weak infinitesimal generator  (see~\cite[Ch I.6]{Dynkin1965}).
 Let us recall the definition of the bounded pointwise convergence: for all $f_n$, $f$ in ${\cal B}(E\cup\{\d\})$, we say that
\begin{align*}
\text{b.p.-}\lim_{n\rightarrow\infty} f_n=f
\end{align*}
if and only if $\sup_{n} \|f_n\|_{\infty}<\infty$ and for all $x\in E\cup\{\d\}$, $f_n(x)\rightarrow f(x)$.

The weak infinitesimal generator $L^w$ of $(P_t)$ is defined as
\begin{align*}
L^w f= \text{b.p.-}\lim_{h\rightarrow 0} \frac{P_h f-f}{h},
\end{align*}
for all $f\in {\cal B}(E\cup\{\d\})$ such that the above b.p.--limit exists and
\begin{align*}
\text{b.p.-}\lim_{h\rightarrow 0} P_h L^w f=L^w f.
\end{align*}
We call weak domain and denote by ${\cal D}(L^w)$ the set of such functions $f$.
We define similarly the b.p.--limit in ${\cal B}(E)$ and the weak infinitesimal generator $\tilde{L}^w$ of $(\tilde{P}_t)$ and its weak domain ${\cal D}(\tilde{L}^w)$.

\begin{thm}
\label{thm:generator-of-Q-process}
Assume that (A) is satisfied. 
Then 
\begin{align}
\label{eq:domain-l-tilde}
{\cal D}(\tilde{L}^w)=\left\{f\in{\cal B}(E),\;\eta f\in{\cal D}(L^w)\text{ and }\frac{L^w(\eta f)}{\eta}\text{is bounded}\right\}
\end{align}
 and, for all $f\in{\cal D}(\tilde{L}^w)$,
\begin{align*}
\tilde{L}^wf=\lambda_0 f+\frac{L^w(\eta f)}{\eta}.
\end{align*}
If in addition $E$ is a topological space and $\cal E$ is the Borel $\sigma$-field, and if for all open set $U\subset E$ and $x\in U$,
\begin{align}
\label{eq:stochasticallycontinuous}
\lim_{h\rightarrow 0} p(x;h,U)
= \lim_{h\rightarrow 0} P_h \11_{U}(x)=1,
\end{align}
then the semi-group $(\tilde{P}_t)$ is uniquely determined by its weak infinitesimal generator $\tilde{L}^w$.
\end{thm}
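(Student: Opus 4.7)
The plan is to exploit the explicit semigroup identity $\tilde P_h f(x) = e^{\lambda_0 h}\eta(x)^{-1} P_h(\eta f)(x)$ from Theorem~\ref{thm:Q-process}(ii), together with $P_t\eta = e^{-\lambda_0 t}\eta$ (consequence of Proposition~\ref{prop:2}), to transfer weak-generator information between $L^w$ and $\tilde L^w$ via multiplication and division by $\eta$. A direct computation yields the basic identity
\begin{equation*}
\frac{\tilde P_h f - f}{h} \;=\; \frac{e^{\lambda_0 h}}{\eta}\,\frac{P_h(\eta f) - \eta f}{h} \;+\; \frac{e^{\lambda_0 h} - 1}{h}\, f,
\end{equation*}
together with a symmetric identity expressing $(P_h(\eta f)-\eta f)/h$ in terms of $(\tilde P_h f - f)/h$. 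These formally deliver $\tilde L^w f = \lambda_0 f + L^w(\eta f)/\eta$; the real work lies in checking the boundedness and bounded-pointwise continuity built into the definitions of the weak domains.

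For the inclusion $\supseteq$ in~\eqref{eq:domain-l-tilde}, suppose $\eta f\in{\cal D}(L^w)$ with $L^w(\eta f)/\eta$ bounded. The subtlety is that dividing by $\eta$, which can vanish near $\partial$, threatens the uniform bound required by b.p.--convergence. To sidestep this, I apply Dynkin's formula $P_h(\eta f) - \eta f = \int_0^h P_s L^w(\eta f)\,ds$ (valid for the weak generator), divide by $\eta$, and rewrite the integrand backwards through the semigroup relation as $P_s L^w(\eta f)/\eta = e^{-\lambda_0 s}\,\tilde P_s[L^w(\eta f)/\eta]$. Since $L^w(\eta f)/\eta$ is bounded and $\tilde P_s$ is a Markov contraction, this provides a uniform bound by $\|L^w(\eta f)/\eta\|_\infty$, together with pointwise convergence as $h\to 0$ to $L^w(\eta f)(x)/\eta(x)$. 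The second b.p.--condition $\tilde P_h\tilde L^w f\to \tilde L^w f$ then follows by re-expressing $\tilde P_h$ through $P_h$ and invoking the analogous b.p.--condition for $\eta f\in{\cal D}(L^w)$. The reverse inclusion is more direct: $(P_h(\eta f)-\eta f)/h = e^{-\lambda_0 h}\eta\,(\tilde P_h f - f)/h + (e^{-\lambda_0 h}-1)\eta f/h$ is immediately uniformly bounded (since $\eta$ is bounded and $f\in{\cal D}(\tilde L^w)$) and converges pointwise to $\eta(\tilde L^w f - \lambda_0 f)$, identifying $L^w(\eta f) = \eta(\tilde L^w f - \lambda_0 f)$ and showing $L^w(\eta f)/\eta = \tilde L^w f - \lambda_0 f$ is bounded.

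For the uniqueness of $(\tilde P_t)$ from $\tilde L^w$ under~\eqref{eq:stochasticallycontinuous}, I first verify that the $Q$-process is itself stochastically continuous. For open $U\ni x$, writing
\begin{equation*}
\tilde P_h\11_U(x) \;=\; \frac{e^{\lambda_0 h}}{\eta(x)}\bigl[P_h\eta(x) - \E_x(\eta(X_h)\11_{U^c}(X_h))\bigr],
\end{equation*}
using $P_h\eta = e^{-\lambda_0 h}\eta$ and $|\E_x(\eta(X_h)\11_{U^c}(X_h))|\leq \|\eta\|_\infty\,\P_x(X_h\notin U)\to 0$ by~\eqref{eq:stochasticallycontinuous}, yields $\tilde P_h\11_U(x)\to 1$. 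Dynkin's classical uniqueness theorem for stochastically continuous Markov semigroups (\cite[Ch.~I.6]{Dynkin1965}) then shows that $\tilde L^w$ determines $(\tilde P_t)$. The main obstacle is the direction $\supseteq$: a naive division by $\eta$ would fail to yield a uniform bound, and it is the Dynkin-formula rewrite combined with the backward identity $P_s L^w(\eta f)/\eta = e^{-\lambda_0 s}\tilde P_s[L^w(\eta f)/\eta]$ that resolves it cleanly.
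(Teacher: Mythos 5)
Your approach mirrors the paper's proof almost exactly: the same basic identity relating the two difference quotients, the same use of Dynkin's formula $P_t(\eta f)-\eta f=\int_0^t P_s L^w(\eta f)\,ds$ to get the uniform bound needed for the $\supseteq$ inclusion, and essentially the same verification of stochastic continuity of $\tilde P$ (both arguments reduce to bounding $\tilde P_h\11_{U^c}(x)$ by $\frac{e^{\lambda_0 h}\|\eta\|_\infty}{\eta(x)}\P_x(X_h\in U^c)$ and invoking~\eqref{eq:stochasticallycontinuous}). Your rewrite $P_s L^w(\eta f)/\eta=e^{-\lambda_0 s}\tilde P_s[L^w(\eta f)/\eta]$ is a cosmetic variant of the paper's $|P_s L^w(\eta f)|\leq C P_s\eta=Ce^{-\lambda_0 s}\eta$ --- same bound, same mechanism.

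There is, however, a genuine gap in your treatment of the $\subseteq$ inclusion. Membership in ${\cal D}(L^w)$, as defined in the paper, requires \emph{two} conditions: b.p.-convergence of the difference quotient $\frac{P_h(\eta f)-\eta f}{h}$ to some limit $g$, \emph{and} b.p.-$\lim_{h\to 0}P_h g=g$. You verify only the first. To ``identify $L^w(\eta f)=\eta(\tilde L^w f-\lambda_0 f)$'' you must also show that $g:=\eta\tilde L^w f-\lambda_0\eta f$ satisfies $P_h g\to g$ boundedly and pointwise. The paper does this by writing $P_h g=\eta e^{-\lambda_0 h}\tilde P_h\tilde L^w f-\lambda_0 P_h(\eta f)$, using~\eqref{eq:semi-group-Q}, and noting that the first term converges pointwise to $\eta\tilde L^w f$ because $f\in{\cal D}(\tilde L^w)$ (which requires the second b.p.-condition for $\tilde L^w$), while the second converges to $-\lambda_0\eta f$ by the b.p.-convergence of the difference quotient already established. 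Your proof sketch also omits this step for the reverse direction, which is precisely the half of the weak-domain definition that makes these manipulations non-automatic; without it the claim $\eta f\in{\cal D}(L^w)$ is unproved.
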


Let us emphasize that \eqref{eq:stochasticallycontinuous} is obviously satisfied if the process $X$ is almost surely c\`adl\`ag.

\begin{rem}
One can wonder if the weak infinitesimal generator can be replaced in the previous result by the standard one. Then Hille-Yoshida Theorem would give necessary and sufficient condition for  a strongly continuous contraction semi-group on a Banach space $B$ to be characterized by its standard infinitesimal generator (see for example \cite[Thm 1.2.6, Prop 1.2.9]{Ethier1986}). However this is an open question that we couldn't solve. To understand the difficulty, observe that even the strong continuity of $\tilde{P}$ cannot be easily deduced from the strong continuity of $P$: in view of~\eqref{eq:semi-group-Q}, if $\eta f\in B$, we have
\begin{align*}
\left\|\tilde{P}_t f-f\right\|_\infty\xrightarrow[t\rightarrow 0]{} 0
\quad\Leftrightarrow\quad 
\left\|\frac{1}{\eta}\left(P_t(\eta f)-\eta f\right)\right\|_\infty\xrightarrow[t\rightarrow 0]{} 0.
\end{align*}
We don't know whether the last convergence can be deduced from the strong continuity of $P$ or if counter examples exist.

%
%
%

\end{rem}

\section{Applications}
\label{sec:application}

This section is devoted to the application of Theorems~\ref{thm:QSD_full} and~\ref{thm:Q-process} to discrete and continuous
examples. Our goal is to show how Assumption~(A) can be checked in different practical situations.

\subsection{Generalized birth and death processes}
\label{sec:discrete}

Our goal is to apply our results to generalized birth and death processes. In subsection~\ref{sec:PNM-1d}, we extend known criteria to one
dimensional birth and death processes with catastrophe. In subsection~\ref{sec:PNM-applications}, we apply a similar method to multi-dimensional and infinite dimensional birth and death processes.

\subsubsection{Birth and death processes with catastrophe}
\label{sec:PNM-1d}
We consider an extension of classical birth and death processes with possible mass extinction. Our goal is to extend the recent
result from~\cite{Martinez-Martin-Villemonais2012} on the characterisation of exponential convergence to a unique quasi-stationary
distribution. The existence of quasi-stationary distributions for similar processes was studied in~\cite{vanDoorn2012}.

Let $X$ be a birth and death process on $\mathbb{Z}_+$ with birth rates $(b_n)_{n\geq 0}$ and death rates $(d_n)_{n\geq 0}$ with
$b_0=d_0=0$ and $b_k,d_k>0$ for all $k\geq 1$. We also allow the process to jump to $0$ from any state $n\geq 1$ at rate $a_n\geq 0$.  In particular, the jump rate from $1$ to $0$ is $a_1+d_1$. This process is absorbed in $\d=0$. 

\begin{thm}
\label{pro:bd-proc-1d} 
Assume that $\sup_{n\geq 1} {a_n}<\infty$. Conditions {\upshape (i-vi)} of Theorem~\ref{thm:QSD_full} are equivalent to
\begin{align}
\label{eq:def-S}
S:=\sum_{k\geq 1}\frac{1}{d_k\alpha_k}\sum_{l\geq k} \alpha_l <\infty,
\end{align}
with
$
    \alpha_k=\left(\prod_{i=1}^{k-1} b_i\right)/\left(\prod_{i=1}^{k} d_i\right).
$

Moreover, there exist constants $C,\gamma>0$ such that
  \begin{align}
  \label{eq:bd-1}
    \left\|\PP_{\mu_1}(X_t\in\cdot\mid t<\tau_\partial)-\PP_{\mu_2}(X_t\in\cdot\mid t<\tau_\partial)\right\|_{TV}\leq Ce^{-\gamma
      t}\|\mu_1-\mu_2\|_{TV}
  \end{align}
  for all $\mu_1,\mu_2\in\mathcal{M}_1(E)$ and $t\geq 0$.
\end{thm}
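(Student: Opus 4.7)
The plan is to invoke Theorem~\ref{thm:QSD_full} by checking Assumption~(A') under the hypothesis $S<\infty$; the inequality~\eqref{eq:bd-1} then follows directly from Corollary~\ref{cor:1}. Conversely, starting from any of the equivalent conditions~(i)--(vi), I shall use the bounded eigenfunction $\eta$ provided by Proposition~\ref{prop:2} to deduce $S<\infty$.

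For the sufficiency direction, I would couple $X$ with the pure birth-and-death process $X^0$ obtained by setting all catastrophe rates to zero, so that $X_s=X^0_s$ for $s<T_c$ and $X_s=\partial$ for $s\geq T_c$, where $T_c$ has conditional intensity $a_{X^0_s}\leq A:=\sup_n a_n<\infty$ given $X^0$. The classical identity $S=\sup_{n\geq 1}\EE_n^0[\tau^{X^0}_0]$ for pure birth-and-death processes makes $X^0$ come down from infinity in expectation, so Markov's inequality supplies $t_0>0$ such that $\sup_x\PP_x^0(\tau^{X^0}_1>t_0/2)\leq 1/4$, where $\tau^{X^0}_1$ denotes the first hitting time of $1$ by $X^0$. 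Combined with $\PP(T_c>t_0\mid X^0)\geq e^{-At_0}$ and the positive probability that $X^0$ started from $1$ remains at $1$ on an interval of length $t_0/2$, I obtain $\inf_{x\geq 1}\PP_x(X_{t_0}=1)\geq c_1>0$, which is (A1') with $\nu_{x_1,x_2}\equiv\delta_1$. To establish (A2'), I would iterate the Markov property at multiples of $t_0$: the inequality $\PP_x(t_0+s<\tau_\partial)\geq c_1\,\PP_x(t_0<\tau_\partial)\,\PP_1(s<\tau_\partial)$ derived from (A1') can be iterated to control $\sup_{x,t}\PP_x(t<\tau_\partial)/\PP_1(t<\tau_\partial)$ uniformly, up to corrections decaying geometrically in the number of iterations.

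For the necessity direction, Proposition~\ref{prop:2} provides a bounded, strictly positive $\eta$ on $E$, vanishing at $\partial$, which satisfies the three-term recurrence
\[
b_n\,\eta(n+1)+d_n\,\eta(n-1)-(b_n+d_n+a_n)\,\eta(n)=-\lambda_0\,\eta(n),\quad n\geq 1,
\]
with $\eta(0)=0$. Setting $u_n:=\eta(n+1)-\eta(n)$ and exploiting the Abel-type identity $\alpha_n d_n=\alpha_{n-1}b_{n-1}$ (valid for $n\geq 2$), one gets the telescoping relation $\alpha_n b_n u_n-\alpha_{n-1}b_{n-1}u_{n-1}=\alpha_n(a_n-\lambda_0)\eta(n)$. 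Summing this identity, using the boundedness of $\eta$ and of $(a_n)$, and exploiting the positivity of $\eta$ then forces the summability $\sum_{k\geq 1}(d_k\alpha_k)^{-1}\sum_{l\geq k}\alpha_l<\infty$, that is, $S<\infty$.

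The main technical obstacle is establishing (A2'): because the $a_n$ are not assumed monotone nor bounded below, the survival probability $\PP_x(t<\tau_\partial)$ from very large $x$ could a priori dominate $\PP_1(t<\tau_\partial)$ by an unbounded factor. Controlling this ratio forces the delicate interplay between the coming-down-from-infinity of $X^0$ (supplied by $S<\infty$) and the uniform upper bound $A$ on the catastrophe rate, and constitutes the essential analytic work of the proof.
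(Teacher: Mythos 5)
Your overall strategy (verify Assumption~(A') to invoke Theorem~\ref{thm:QSD_full}, then apply Corollary~\ref{cor:1}) is the same as the paper's, and your construction of the coupling with the catastrophe-free process and the verification of (A1') via coming down from infinity are close to the paper's argument, modulo routing through $\delta_1$ rather than a generic $\delta_{x_0}$.

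However, there is a genuine gap in your verification of (A2'). You propose iterating $\PP_x(t_0+s<\tau_\partial)\geq c_1\,\PP_x(t_0<\tau_\partial)\,\PP_1(s<\tau_\partial)$. This is a \emph{lower} bound on $\PP_x(\cdot<\tau_\partial)$; iterating it can only yield lower bounds on the ratio $\PP_x(t<\tau_\partial)/\PP_1(t<\tau_\partial)$. But (A2') requires an \emph{upper} bound on $\sup_x\PP_x(t<\tau_\partial)/\PP_\nu(t<\tau_\partial)$, and (A1') on its own cannot supply one (it is a one-sided minorization of the conditional law at time $t_0$, giving no control from above). The paper closes this gap by a completely different mechanism: first it upgrades $S<\infty$ to a \emph{uniform exponential moment} $\sup_x\EE_x[e^{\lambda(\tau_K\wedge\tau_\partial)}]<\infty$ with $\lambda>\overline{q}=d_1+\sup_n a_n$, then it decomposes $\PP_x(t<\tau_\partial)=\PP_x(t<\tau_K\wedge\tau_\partial)+\PP_x(\tau_K\wedge\tau_\partial\leq t<\tau_\partial)$, bounds the first term by $Ae^{-\lambda t}\leq A\,\PP_{x_0}(t<\tau_\partial)$ (using $\lambda\geq\overline{q}$ to compare $e^{-\lambda t}$ with the survival probability from $x_0$), and bounds the second by a Harnack-type comparison $\sup_{y\in K}\PP_y(\cdot)\leq C\inf_{y\in K}\PP_y(\cdot)$ combined with the exponential moment. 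This renewal-at-$\tau_K$ argument is the essential analytic work your iteration does not replace.

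Your necessity direction is also a genuinely different route from the paper's. You propose deriving $S<\infty$ from the three-term recurrence satisfied by the eigenfunction $\eta$ of Proposition~\ref{prop:2}. The paper instead goes directly from (A1) to $S<\infty$: (A1) forces $\inf_n\PP_n(Y_{t_0}\leq z)\geq ce^{-\overline{q}t_0}>0$ (since absorption cannot beat an exponential of rate $\overline{q}$), and a geometric trials argument then yields $\sup_{n\geq z}\EE_n(T'_z)<\infty$, which by the classical identity~\eqref{eq:descente-infini} gives $S<\infty$. Your recurrence-based route is plausible but its conclusion is only asserted: telescoping gives $\alpha_Nb_Nu_N-\eta(1)=\sum_{n=1}^N\alpha_n(a_n-\lambda_0)\eta(n)$, and extracting from this \emph{both} $\sum_n\alpha_n<\infty$ \emph{and} the required rate of summability in~\eqref{eq:def-S} needs a careful sign and growth analysis that you have not supplied. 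The paper's coupling argument is shorter and more robust because it never invokes $\eta$ or $\lambda_0$ for this direction.
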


The last inequality and the following corollary of Theorem~\ref{thm:Q-process} are original results, even in the simpler case of birth and death processes without catastrophes.

\begin{cor}
Under the assumption that $\sup_{n\geq 1} a_n<\infty$ and $S<\infty$, the family $(\QQ_x)_{x\in E}$ of
probability measures on $\Omega$ defined by
\begin{align}
  \label{eq:bd-2}
\lim_{t\rightarrow+\infty}\PP_x(A\mid t<\tau_\partial)=\QQ_x(A),\ \forall A\in{\cal F}_s,\ \forall s\geq 0,
\end{align}
is well defined.
In addition, the process $X$ under $(\Q_x)$ admits the unique invariant distribution
\begin{align*}
\beta(dx)=\eta(x)\alpha(dx)
\end{align*}
and there exist constants $C,\gamma>0$ such that, for any $x\in E$,
\begin{align}
  \label{eq:bd-3}
\left\|\Q_{x}(X_t\in\cdot)-\beta\right\|_{TV}\leq Ce^{-\gamma t}.
\end{align}
\end{cor}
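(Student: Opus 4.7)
The plan is to deduce the corollary directly from Theorem~\ref{pro:bd-proc-1d} and Theorem~\ref{thm:Q-process}, since it is essentially a specialization of the latter to the birth-and-death setting once the former has supplied Assumption~(A).

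First I would invoke Theorem~\ref{pro:bd-proc-1d}: under the hypotheses $\sup_{n\geq 1}a_n<\infty$ and $S<\infty$, one of the equivalent conditions (i)--(vi) of Theorem~\ref{thm:QSD_full} holds, so in particular Assumption~(A) is satisfied for some triple $(\nu,t_0,c_1)$ and constant $c_2>0$. This is the only ingredient where the special structure of the birth-and-death process with catastrophes intervenes; from here on the argument is purely an application of the general theory.

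Next, I would apply the three points of Theorem~\ref{thm:Q-process}. Point~(i) gives the existence of the family $(\QQ_x)_{x\in E}$ via the stated limit~\eqref{eq:bd-2}, and the fact that $X$ under $(\QQ_x)$ is a time-homogeneous Markov process with state space $E=\mathbb{N}\setminus\{0\}$. Point~(iii) provides the invariant probability measure $\beta(dx)=\eta(x)\alpha(dx)/\int_E\eta(y)\alpha(dy)$; I would then note that by Proposition~\ref{prop:2} one has $\alpha(\eta)=1$, so the normalization is trivial and we recover the announced form $\beta(dx)=\eta(x)\alpha(dx)$.

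Finally, for the exponential convergence~\eqref{eq:bd-3}, I would apply the last inequality of Theorem~\ref{thm:Q-process}(iii) with $\mu_1=\delta_x$ and $\mu_2=\beta$. Since $\beta$ is $\QQ$-invariant, $\QQ_\beta(X_t\in\cdot)=\beta$, and thus
\begin{align*}
\left\|\QQ_x(X_t\in\cdot)-\beta\right\|_{TV}
\leq (1-c_1c_2)^{\lfloor t/t_0\rfloor}\,\|\delta_x-\beta\|_{TV}
\leq 2(1-c_1c_2)^{\lfloor t/t_0\rfloor},
\end{align*}
which is of the form $Ce^{-\gamma t}$ with $\gamma=-\log(1-c_1c_2)/t_0>0$ and $C=2/(1-c_1c_2)$. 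There is no genuine obstacle in this proof: the whole content sits inside Theorem~\ref{pro:bd-proc-1d} (whose verification of~(A) is the nontrivial step, handled earlier) and Theorem~\ref{thm:Q-process}. The only minor subtlety is the bookkeeping between the normalized form of $\beta$ in Theorem~\ref{thm:Q-process} and the unnormalized form in the corollary, resolved by $\alpha(\eta)=1$.
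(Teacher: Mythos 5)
Your proposal is correct and is exactly the argument the paper intends: Theorem~\ref{pro:bd-proc-1d} yields Assumption~(A), and then the corollary is a direct specialization of Theorem~\ref{thm:Q-process} (with the normalization issue you flag resolved by $\alpha(\eta)=1$ from Proposition~\ref{prop:2}, though note Theorem~\ref{thm:Q-process}(iii) already states $\beta$ in the unnormalized form). The paper gives no separate proof, treating it as an immediate consequence, and your filling-in of the details — in particular taking $\mu_1=\delta_x$, $\mu_2=\beta$ in the contraction estimate — is correct.
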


%

\begin{rem}
  \label{rem:PNM-1d}
  In view of Point 2.\ in Remark~\ref{rem:meyn-tweedie}, we actually also have the following property. Conditionally on non-extinction, $X$ converges
  uniformly in total variation to some probability measure $\alpha$ if and only if it satisfies (i--vi).
\end{rem}


\begin{proof}[Proof of Theorem~\ref{pro:bd-proc-1d}]
Let $Y$ be the birth and death process on $\Z_+$ (without catastrophe) with birth and death rates $b_n$ and $d_n$ from state $n$. The process $X$ and $Y$ can be coupled such that $X_t=Y_t$, for all $t<\tau_\d$ almost surely.

We recall (see~\cite{vanDoorn1991}) that $S<\infty$ if and only if the birth and death process $Y$ comes down from infinity, in the sense that 
\begin{align*}
\sup_{n\geq 0}\E_n(\tau'_\d)<\infty,
\end{align*}
where $\tau'_\d=\inf\{t\geq 0,\ Y_t=0\}$.
More precisely, for any $z\geq 0$, 
\begin{align}
\label{eq:descente-infini}
\sup_{n\geq z}\E_n(T'_z)= \sum_{k\geq z+1}\frac{1}{d_k\alpha_k}\sum_{l\geq k} \alpha_l <\infty,
\end{align}
where $T'_z=\inf\{t\geq 0,\ Y_t\leq z\}$. Note that this equality remains true even when the sum is infinite.
\medskip

Let us first assume that (A1) is satisfied. This will be sufficient to prove that $S<\infty$. Let $z$ be large enough so that
$\nu(\{1,\ldots,z\})>0$. Then, for all $n\geq 1$,
\begin{align*}
\P_n(Y_{t_0}\leq z)&\geq \P_n(X_{t_0}\leq z\text{\ and\ }t_0\leq\tau_\d)\\
&\geq c_1 \nu(\{1,\ldots,z\})\P_n(t_0\leq\tau_\d).
\end{align*}
Since the jump rate of $X$ to $0$ from any state is always smaller than $\overline{q}=d_1+\sup_{n\geq 1} {a_n}$, the absorption time dominates an exponential r.v.\ of
parameter $\overline{q}$. Thus $\P_n(t_0\leq\tau_\d)\geq e^{-\overline{q} t_0}$ and hence 
\begin{equation*}
  \inf_{n\geq 1}\P_n(Y_{t_0}\leq z)\geq ce^{-\overline{q} t_0},  
\end{equation*}
for some $c>0$. Defining $\theta=\inf\{n\geq 0,\ Y_{nt_0}\leq z\}$, we deduce from the Markov property that, for all $n\geq 1$ and
$k\geq 0$,
$$
\P_n(\theta> k+1\mid\theta> k)\leq\sup_{m>z}\P_m(Y_{t_0}\geq z)\leq 1-ce^{-\overline{q} t_0}.
$$
Thus $\P_n(\theta> k)\leq(1-ce^{-\overline{q} t_0})^k$ and $\sup_{n\geq z}\E_n(T'_z)\leq \sup_{n\geq z}\E_n(t_0\theta)<\infty$.
By~\eqref{eq:descente-infini}, this entails $S<\infty$. \medskip

Conversely, let us assume that $S<\infty$. 
%
For
all $\varepsilon>0$ there exists $z$ such that
\begin{align*}
\sup_{n\geq z}\E_n(T'_z)\leq \varepsilon.
\end{align*}
Therefore, $\sup_{n\geq z}\P_n(T'_z\geq 1)\leq\varepsilon$ and, applying recursively the Markov property, $\sup_{n\geq z}\P_n(T'_z\geq
k)\leq\varepsilon^k$.
Then, for all $\lambda>0$, there exists $z\geq 1$ such that
\begin{equation}
  \label{eq:moment-expo}
  \sup_{n\geq 1}\E_n(e^{\lambda T'_z})<+\infty.  
\end{equation}

Fix $x_0\in E$ and let us check that this exponential moment implies (A2) and then (A1) with $\nu=\delta_{x_0}$. We choose $\lambda=1+\overline{q}$ and apply the previous construction of $z$. Defining the finite set $K=\{1,2,\ldots,z\}\cup\{x_0\}$ and $\tau_K=\inf\{t\geq 0,\,X_t\in K\}$, we thus have
\begin{align}
\label{eq:expo-moment}
A:=\sup_{x\in E}\E_x(e^{\lambda \tau_K\wedge \tau_\d})<\infty.
\end{align}

 Let us
  first observe that for all $y,z\in K$, $\P_y(X_1=z)\P_z(t<\tau_\d)\leq\P_y(t+1<\tau_\d)\leq\P_y(t<\tau_\d)$. Therefore, the
  constant $C^{-1}:=\inf_{y,z\in K}\P_y(X_1=z)>0$ satisfies the following inequality:
  \begin{equation}
    \label{eq:ineq-PB-catastrophe}
    \sup_{x\in K}\P_x(t<\tau_\d)\leq C\inf_{x\in K}\P_x(t<\tau_\d),\quad\forall t\geq 0.
  \end{equation}
Moreover, since $\lambda$ is larger than the maximum absorption rate $\overline{q}$, for $t\geq s$,
\begin{align*}
e^{-\lambda s}\P_{x_0}(t-s<\tau_\d)\leq \P_{x_0}(t-s<\tau_\d)\inf_{x\geq 1}\P_{x}(s<\tau_\d)\leq \P_{x_0}(t<\tau_\d).
\end{align*}
For all $x\in E$, we deduce from Chebyshev's inequality and~\eqref{eq:expo-moment} that 
\begin{align*}
\P_x(t<\tau_K\wedge \tau_\d)\leq Ae^{-\lambda t}.
\end{align*}
 Using the last three inequalities and the strong Markov property, we have
\begin{align*}
    \P_x(t<\tau_\d) & =\P_x(t<\tau_K\wedge\tau_\d)+\P_x(\tau_K\wedge\tau_\d\leq t<\tau_\d)\\
&\leq Ae^{-\lambda t}+\int_0^t \sup_{y\in K\cup\{\d\}}\P_y(t-s<\tau_\d)\P_x(\tau_K\wedge\tau_\d\in ds) \\
        & \leq A\P_{x_0}(t<\tau_\d)+ C\int_0^t \P_{x_0}(t-s<\tau_\d)\P_x(\tau_K\wedge\tau_\d\in ds) \\
    & \leq  A\P_{x_0}(t<\tau_\d)+C\,\P_{x_0}(t<\tau_\d)\int_0^t e^{\lambda s}\,\P_x(\tau_K\wedge\tau_\d\in ds)\\
    &\leq A(1+C)\P_{x_0}(t<\tau_\d).
  \end{align*}
This shows (A2) for $\nu=\delta_{x_0}$. 

Let us now show that $(A1)$ is satisfied. We have, for all $x\in E$,
\begin{align*}
\P_x(\tau_K<t)&= \P_x(\tau_K<t\wedge\tau_\d)\geq \P_x(t<\tau_\d)-\P_x(t<\tau_K\wedge\tau_\d)\\
&\geq e^{-\overline{q}t}-Ae^{-\lambda t}.
\end{align*}
Since $\lambda>\overline{q}$, there exists $t_0>0$  such that
\begin{align*}
\inf_{x\in E} \P_x(\tau_K<t_0-1) >0.
\end{align*}
But the irreducibility of $X$ and the finiteness of $K$ imply that $\inf_{y\in K}\P_y(X_1=x_0)>0$, thus the Markov property entails
\begin{align*}
\inf_{x\in E}\P_x(X_{t_0}=x_0)\geq\inf_{x\in E}\mathbb{E}_x[\mathds{1}_{\tau_K<t_0-1}\inf_{y\in K}\mathbb{P}_y(X_1=x_0)e^{-q_{x_0}(t_0-1-\tau_K)}]>0,
\end{align*}
where $q_{x_0}=a_{x_0}+b_{x_0}+d_{x_0}$ is the the jump rate from state $x_0$, which implies (A1) for $\nu=\delta_{x_0}$. Finally, using Theorem~\ref{thm:QSD_full}, we have proved that (i--vi) holds.

In order to conclude the proof, we use Corollary~\ref{cor:1} and the fact that
$$
\inf_{x\in E} c_2(\delta_x)\geq  \inf_{x\in E}\P_x(X_{t_0}=x_0)c_2(\delta_{x_0})>0.
$$
This justifies the last part of Theorem~\ref{pro:bd-proc-1d}.

%
\end{proof}

\subsubsection{Extensions to multi-dimensional birth and death processes}
\label{sec:PNM-applications}

In this section, our goal is to illustrate how the previous result and proof apply in various multi-dimensional cases, using
comparison arguments. We focus here on a few instructive examples in order to illustrate the tools of our method and its
applicability to a wider range of models. We will consider three models of multi-specific populations, with competitive or cooperative
interaction within and among species.

Our first example deals with a birth and death process in $\Z_+^d$, where each coordinate represent the number of individuals of distinct types (or in different geographical patches). We will assume that mutations (or
migration) from each type (or patch) to each other is possible at birth, or during the life of individuals. In this example, the absorbing state $\d=0$ corresponds to the total extinction of the population.

We consider in our second example a cooperative birth and death process without mutation (or migration), where extinct types remain
extinct forever. In this case, the absorbing states are $\d=\mathbb{Z}_+^d\setminus\mathbb{N}^d$, where $\mathbb{N}=\{1,2,\ldots\}$.

Our last example shows how these techniques apply to discrete populations with continuous type space and Brownian genetical drift.
Such multitype birth and death processes in continuous type space naturally arise in evolutionary
biology~\cite{ChampagnatMeleard2007} and the existence of a quasi-stationary distribution of similar processes has been studied
in~\cite{Collet2011a}.

\bigskip

\begin{exa}[Birth and death processes with mutation or migration]$\ $\\
\label{sec:PNM-with-mutation}\noindent We consider a $d$-dimensional birth and death process with  type-dependent individual birth and death rates $X$, where individuals compete with each others with type dependent coefficients. 
We denote by $\lambda_{ij}> 0$ the birth rate of an individual of type $j$ from an individual of type $i$, $\mu_i> 0$ the death rate of an individual of type $i$ and by $c_{ij}>0$ the death rate of an individual of type $i$ from competition with an individual of type $j$. More precisely, if $x\in\Z_+^d$, denoting by $b^i(x)$ (resp. $d^i(x)$) the birth (resp. death) rate of an individual of type $i$ in the population $x$,
we have
\begin{align*}
\begin{cases}
b^i(x)=\sum_{j=1}^d \lambda_{ji}x_j, \\
d^i(x)=\mu_i x_i + \sum_{j=1}^d c_{ij} x_ix_j.
\end{cases}
\end{align*}
Note that $\d=0$ is the only absorbing state for this process.


The process $X$ can be coupled with a birth and death process $Y$ such that $|X_t|\leq Y_t$ with birth and death rates
\begin{gather}
  b_n:=nd\sup_{i,j}\lambda_{ij} \geq \sup_{x\in\mathbb{Z}_+^d,\ |x|=n} \sum_{i=1}^d b^i(x) \label{eq:def-bn}\\
  d_n:=n\inf_{i}\mu_i+n^2\inf_{i,j}c_{ij}\leq \inf_{x\in\mathbb{Z}_+^d,\ |x|=n} \sum_{i=1}^d d^i(x), \label{eq:def-dn}
\end{gather}
where $|x|=x_1+\cdots+x_d$. We can check that $S$, defined in~\eqref{eq:def-S}, is finite and hence one obtains~\eqref{eq:moment-expo} and \eqref{eq:expo-moment} exactly as in the proof of Theorem~\ref{pro:bd-proc-1d}. From these inequalities, the proof of (A2) and (A1) is the same as for Theorem~\ref{pro:bd-proc-1d}, with $K=\{x\in E,\,|x|\leq z\}\cup\{x_0\}$ and $\overline{q}=\max_{i}\mu_i+c_{ii}<\infty$.

Hence there exists a unique quasi-stationary distribution. Moreover, conditions (i--vi) hold as well as~\eqref{eq:bd-1},~\eqref{eq:bd-2} and~\eqref{eq:bd-3}.

%

\end{exa}

\bigskip
\begin{exa}[Weak cooperative birth and death process without mutation]$\ $\\
\label{sec:PNM-without-mutation}\noindent We consider a $d$-dimensional birth and death process with  type-dependent individual birth and death rates, where individuals of the same type compete with each others and where individuals of different types cooperate with each others. 
Denoting by $\lambda_i\geq 0$ and $\mu_i\geq 0$ the individual birth and death rates, by $c_{ii}>0$ the intra-type individual competition rate and by $c_{ij}\geq 0$ the inter-type individual cooperation rate for any types $i\neq j$, we thus have the following multi-dimensional birth and death rates:
\begin{align*}
\begin{cases}
b^i(x)=\lambda_i x_i + \sum_{j\neq i} c_{ij} x_ix_j\\
d^i(x)=\mu_i x_i + c_{ii} x_i^2.
\end{cases}
\end{align*}
This process is absorbed in
$\d=\mathbb{Z}_+^d\setminus\mathbb{N}^d$.

We assume that the inter-type cooperation is weak, relatively to the competition between individuals of same types. More formally, we assume that, for all $i\in \{1,\ldots,d\}$,
\begin{align}
\label{eq:weak-cooperation-assumption}
\left(1-\frac{1}{d}\right)\max_{i\neq j} \frac{c_{ij}+c_{ji}}{2} < \frac{1}{\beta}
\end{align}
where $\beta=\sum_{j=1}^d \frac{1}{c_{jj}}$. 

We claim that there exists a unique quasi-stationary distribution and that conditions (i--vi) hold as well as~\eqref{eq:bd-1},~\eqref{eq:bd-2} and~\eqref{eq:bd-3}.

Indeed the same coupling argument as in the last example can be used with $b_n$ and $d_n$ defined as
\begin{align*}
b_n&:= n\max_{i\in\{1,\ldots,d\}} \lambda_i +  n^2\left(1-\frac{1}{d}\right) \max_{i< j}  \frac{c_{ij}+c_{ji}}{2},\\
d_n&:= n \min_{i\in\{1,\ldots,d\}}\mu_i + \frac{n^2}{\beta}.
\end{align*}
From this definition, one can check that 
\begin{align*}
\sup_{x\in\Z_+^d,\,|x|=n}&\sum_{i=1}^d b^i(x)\leq 
\max_{i\in\{1,\ldots,d\}} \lambda_i \sum_{i=1}^d x_i +  \max_{i< j}  \frac{c_{ij}+c_{ji}}{2}\sum_{i\neq j}x_ix_j\\
&\leq n \max_{i\in\{1,\ldots,d\}} \lambda_i+ \max_{i< j}  \frac{c_{ij}+c_{ji}}{2} \left(n^2-n^2\min_{y\in \R_+^d, |y|=1} \sum_{i=1}^d y_i^2\right)= b_n
\end{align*}
and
\begin{align*}
\inf_{x\in\Z_+^d,\,|x|=n}\sum_{i=1}^d d^i(x) 
   \geq n \min_{i\in\{1,\ldots,d\}}\mu_i + n^2 \min_{y\in \R_+^d, |y|=1} \sum_{i=1}^d c_{ii}y_i^2.
\end{align*}
Since the function $y\mapsto \sum_{i=1}^d c_{ii}y_i^2$ on $\{y\in \R_+^d, |y|=1\}$ reaches its minimum at $(1/c_{11},\ldots,1/c_{dd})/\beta$, we have
\begin{align*}
\inf_{x\in\Z_+^d,\,|x|=n}\sum_{i=1}^d d^i(x) \geq d_n.
\end{align*}
Now we deduce from~\eqref{eq:weak-cooperation-assumption} that 
$b_n/d_{n+1}$ converges to a limit smaller than 1. Hence,
\begin{align*}
    S':=\sum_{k\geq d}\frac{1}{d_k\alpha'_k}\sum_{l\geq k} \alpha'_l <\infty,
  \end{align*}
  with $\alpha'_k=\left(\prod_{i=d}^{k-1} b_i\right)/\left(\prod_{i=d}^{k} d_i\right)$. This implies that, for any $\lambda>0$, there exists $z$ such that~\eqref{eq:moment-expo} holds. Because of the cooperative assumption, the maximum absorption rate is given by $\overline{q}=\max_{i} \mu_i+c_{ii}<\infty$ and we can conclude following the proof of Theorem~\ref{pro:bd-proc-1d}, as in Example~\ref{sec:PNM-with-mutation}.



\end{exa}

\begin{exa}[Infinite dimensional birth and death processes]$\ $\\
\label{sec:PNM-infini-dim}We consider a birth and death process $X$ evolving in the set ${\cal M}$ of finite point measures on $\mathbb{T}$, where $\mathbb{T}$ is the set of individual types. We refer to~\cite{Collet2011a} for a study of the existence of quasi-stationary distributions for similar processes.

If $X_t=\sum_{i=1}^n \delta_{x_i}$, the population at time $t$ is composed of $n$ individuals of types $x_1,\ldots,x_n$. For simplicity, we assume that $\mathbb{T}$ is the unit torus of $\R^d$, $d\geq 1$, and that each individual's type is subject to mutation during its life according to independent standard Brownian motions in $\mathbb{T}$.

We denote by $\lambda(x)> 0$ the birth rate of an individual of type $x\in\mathbb{T}$, $\mu(x)> 0$ its death rate and by $c(x,y)>0$ the death rate of an individual of type $x$ from competition with an individual of type $y$, where $\lambda$, $\mu$ and $c$ are continuous functions. More precisely, if $\xi=\sum_{i=1}^n \delta_{x_i}\in{\cal M}\setminus\{0\}$, denoting by $b(x_i,\xi)$ (resp. $d(x_i,\xi)$) the birth (resp. death) rate of an individual of type $x_i$ in the population $\xi$,
we have
\begin{align*}
\begin{cases}
b(x_i,\xi)=\lambda(x_i), \\
d(x_i,\xi)=\mu(x_i) + \int_{\mathbb{T}}c(x_i,y)d\xi(y).
\end{cases}
\end{align*}
This corresponds to clonal reproduction.
Similarly to Example~\ref{sec:PNM-with-mutation}, we assume that the process is absorbed at $\d=0$ (see~\cite{ChampagnatMeleard2007}
for the construction of this process).

We claim that there exists a unique quasi-stationary distribution and that conditions (i--vi) hold as well as~\eqref{eq:bd-1},~\eqref{eq:bd-2} and~\eqref{eq:bd-3}.

Indeed, one can check as in example~1 that there exists $t_0\geq 0$ such that
\begin{align*}
\inf_{\xi\in{\cal M}}\P_{\xi}(|\text{Supp }X_{t_0}|=1)>0.
\end{align*}
Observe that for all measurable set $\Gamma\subset \mathbb{T}$,
\begin{align*}
\P_{\delta_x}(X_{1}=\delta_y,\text{ with }y\in \Gamma)\geq e^{-\sup_{x\in\mathbb{T}}(b(x,\delta_x)+d(x,\delta_x))}\P_x(\tilde{B}_1\in \Gamma),
\end{align*}
where $\tilde{B}$ is a standard Brownian motion in $\mathbb{T}$. Hence, defining $\nu$ as the law of $\delta_U$, where $U$ is uniform on $\mathbb{T}$, there exists $c_1>0$ such that, for all measurable set $A\subset {\cal M}$,
\begin{align*}
\inf_{\xi\in{\cal M}}\P_{\xi}(X_{t_0+1}\in A)\geq c_1\nu(A).
\end{align*}
This entails (A1) for the measure $\nu$. As in the two previous examples, (A2) follows from similar computations as in the proof of
Theorem~\ref{pro:bd-proc-1d}. In particular, there exists $n_0$ such that
$\sup_{\xi\in\mathcal{M}}\E_\xi(e^{\lambda\tau_{K_{n_0}}\wedge\tau_\d})<\infty$, where $\lambda=1+\sup_{x\in\mathbb{T}}[\mu(x)+c(x,x)]$
and
$$
K_{n_0}=\{\xi\in\mathcal{M},\ |\text{Supp}(\xi)|\leq n_0\}.
$$
The new difficulty is to prove~\eqref{eq:ineq-PB-catastrophe}. Since absorption occurs only from states with one individual, this is equivalent to: there exists a constant $C>0$ such
that, for all $t\geq 0$, $x_0\in\mathbb{T}$,
\begin{equation}
  \label{eq:proof-brownian-mutation-1}
  \P_{\delta_{x_0}}(t<\tau_\d)\geq C\sup_{\xi\in K_{n_0}}\P_\xi(t<\tau_\d).  
\end{equation}
If this holds, we conclude the proof as for Theorem~\ref{pro:bd-proc-1d}.

To prove~\eqref{eq:proof-brownian-mutation-1}, let us first observe that the jump rate from any state of $K_{n_0}$ is uniformly
bounded from above by a constant $\rho<\infty$. Hence, we can couple the process $X_t$ with an exponential r.v.\ $\tau$ with
parameter $\rho$ independent of the Brownian motions driving the mutations in such a way that $X$ does not jump in the time interval
$[0,\tau]$. For any $x\in\mathbb{T}^n$ and any Brownian motion $B$ on $\mathbb{T}^n$ independant of $\tau$, we have
$$
\P(x+B_{\tau\wedge 1}\in\Gamma)\leq \P(x+B_{\tau}\in\Gamma)+\P(x+B_{1}\in\Gamma)\leq C\text{Leb}(\Gamma),\quad\forall \Gamma\in\mathcal{B}(\mathbb{T}^n),
$$
where the last inequality follows from the explicit density of $B_{\tau}$~\cite[Eq.\,1.0.5]{borodin-salminen-02}. From this it is
easy to deduce that there exists $C'<\infty$ such that, for all $1\leq n\leq n_0$, $A\in\mathcal{B}(K_n\setminus K_{n-1})$ and
$\xi\in K_n\setminus K_{n-1}$,
$$
\P_\xi(X_{\tau\wedge 1}\in A)\leq C'\mathcal{U}_n(A),
$$
where $\mathcal{U}_n$ is the law of $\sum_{i=1}^n\delta_{U_i}$, where $U_1,\ldots,U_n$ are i.i.d. uniform r.v. on $\mathbb{T}$. Since
one also has
$$
\P_{\delta_{x_0}}(X_1\in A)\geq\mathbb{P}_{\delta_{x_0}}(|\text{Supp}(X_{1/2})|=n,\, X_1\in A)\geq C''\mathcal{U}_n(A),\quad\forall A\in\mathcal{B}(K_n\setminus K_{n-1})
$$
for a constant $C''$ independent of $A$, we have proved that
$$
\P_{\delta_{x_0}}(X_1\in A)\geq C \sup_{\xi\in K_n\setminus K_{n-1}}\P_\xi(X_{\tau\wedge 1}\in A),\quad\forall A\in\mathcal{B}(K_n\setminus K_{n-1})
$$
for a constant $C$ independent of $A$ and $n\leq n_0$. We can now prove~\eqref{eq:proof-brownian-mutation-1}: for all fixed $\xi\in
K_n\setminus K_{n-1}$,
\begin{align*}
  \P_{\delta_{x_0}}(t+1<\tau_\d) &\geq \int_{K_n\setminus K_{n-1}}\P_{\zeta}(t<\tau_\d)\P_{\delta_{x_0}}(X_1\in d\zeta) \\
  & \geq C\int_{K_n\setminus K_{n-1}}\P_{\zeta}(t<\tau_\d)\P_\xi(X_{\tau\wedge 1}\in d\zeta) \\
  & = C\P_\xi(\tau\wedge 1+t<\tau_\d)\geq C\P_\xi(t+1<\tau_\d).
\end{align*}

\end{exa}

\subsection{Absorbed neutron transport process}
\label{sec:diffusion}

The propagation of neutrons in fissible media is typically modeled by neutron transport systems, where the trajectory of the particle
is composed of straight exponential paths between random changes of directions~\cite{dautray-lions-93-vol-6,Zoia2011}. An important
problem to design nuclear devices is the so-called shielding structure, aiming to protect humans from ionizing particles. It is in
particular crucial to compute the probability that a neutron exits the shielding structure $D$ before its absorption by the
medium~\cite{BachocBachouchLenotre2014}. This question is of course related to the quasi-stationary behavior of neutron tranport,
where absorption corresponds to the exit of a neutron from $D$.

We consider a piecewise-deterministic process of neutron transport with constant velocity. Let $D$ be an open connected bounded domain of
$\mathbb{R}^2$, let $S^2$ be the unit sphere of $\mathbb{R}^2$ and $\sigma(du)$ be the uniform probability measure on $S^2$. We
consider the Markov process $(X_t,V_t)_{t\geq 0}$ in $D\times S^2$ constructed as follows: $X_t=\int_0^t V_s\,ds$ and the velocity
$V_t\in S^2$ is a pure jump Markov process, with constant jump rate $\lambda>0$ and uniform jump probability distribution $\sigma$.
In other words, $V_t$ jumps to i.i.d. uniform values in $S^2$ at the jump times of a Poisson process. At the first time where
$X_t\not\in D$, the process immediately jumps to the cemetery point $\d$, meaning that the process is absorbed at the boundary of
$D$. An example of path of the process $(X,V)$ is shown in Fig.~\ref{fig:sample-path}. For all $x\in D$ and $u\in S^2$, we denote by
$\mathbb{P}_{x,u}$ (resp.\ $\mathbb{E}_{x,u}$) the distribution of $(X,V)$ conditionned on $(X_0,V_0)=(x,u)$ (resp.\ the expectation
with respect to $\mathbb{P}_{x,u}$).

\begin{figure}[h]
  \begin{center}
    \vspace{0.5cm}

    \includegraphics[height=6.5cm]{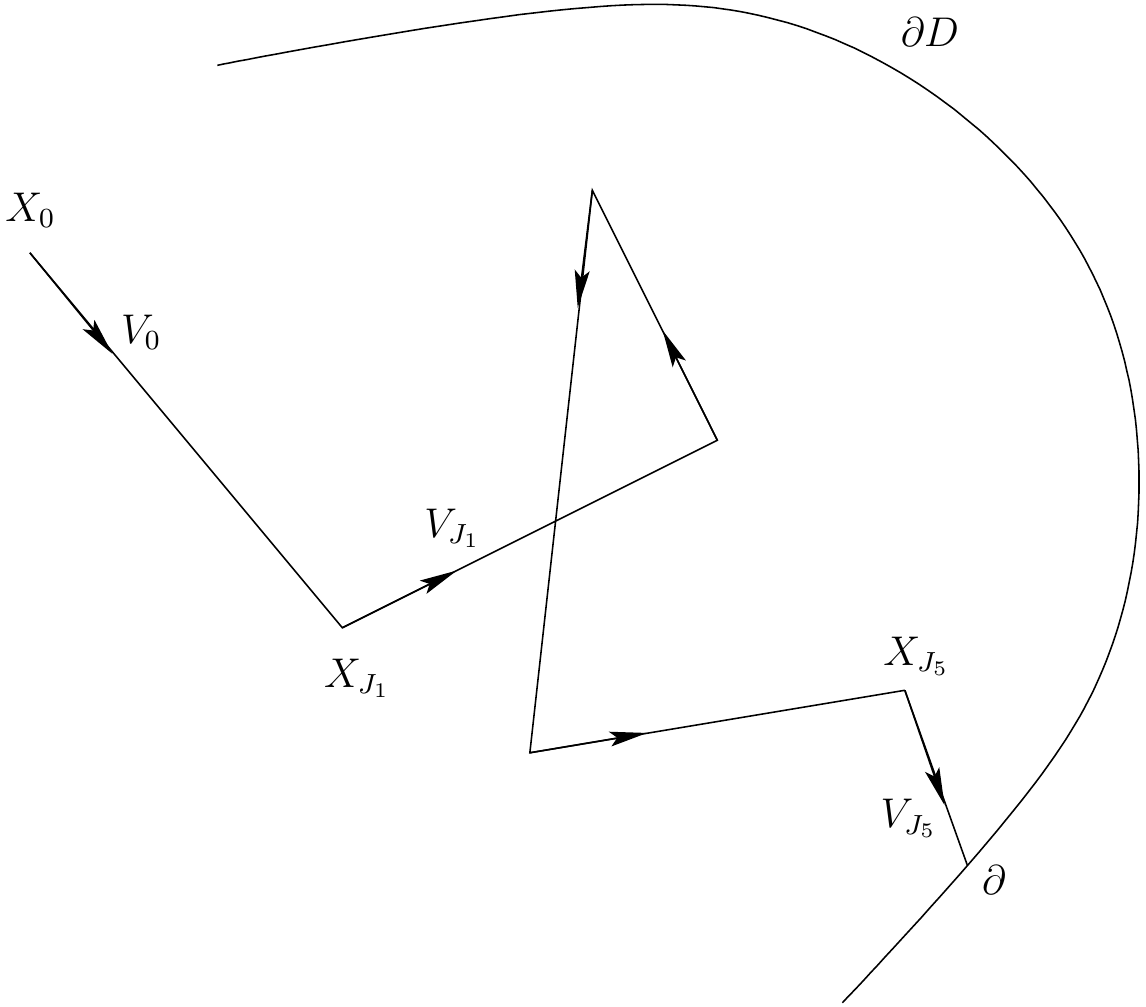}
  \end{center}
  \caption{A sample path of the neutron transport process $(X,V)$. The times $J_1<J_2<\ldots$ are the successive jump times of $V$.}
  \label{fig:sample-path}
\end{figure}

\begin{rem}
  The assumptions of contant velocity, uniform jump distribution, uniform jump rates and on the dimension of the process can be
  relaxed but we restrict here to the simplest case to illustrate how conditions~(A1) and~(A2) can be checked. In particular, it is
  easy to extend our results to variants of the process where, for instance, the jump measure for $V$ may depend on the state of the process, provided this measure is absolutely continuous w.r.t.\ $\sigma$ with density uniformly bounded from above and below.
\end{rem}

We denote by $\partial D$ the boundary of the domain $D$, $\text{diam}(D)$ its diameter, and for all $A\subset\mathbb{R}^2$ and
$x\in\mathbb{R}^2$, by $d(x,A)$ the distance of $x$ to the set $A$: $d(x,A)=\inf_{y\in A}|x-y|$. We also denote by $B(x,r)$ the open
ball of $\mathbb{R}^2$ centered at $x$ with radius $r$.  We assume that the domain $D$ is connected and smooth enough, in the
following sense.

\paragraph{Assumption~(B)}
We assume that there exists $\varepsilon>0$ such that
\begin{itemize}
\item[(B1)] $D_{\varepsilon}:=\{x\in D:d(x,\partial D)>\varepsilon\}$ is non-empty and connected;
\item[(B2)] there exists $0<s_\varepsilon<t_\varepsilon$ and $\underline{\sigma}>0$ such that, for all $x\in D\setminus
  D_\varepsilon$, there exists $K_x\subset S^2$ measurable such that $\sigma(K_x)\geq \underline{\sigma}$ and for all $u\in K_x$,
  $x+su\in D_\varepsilon$ for all $s\in[s_\varepsilon,t_\varepsilon]$ and $x+su\not\in \partial D$ for all $s\in[0,s_\varepsilon]$.
\end{itemize}

As illustrated by Fig.~\ref{fig:cone}, assumption~(B2) means that, for all $x \in D\setminus D_\varepsilon$, the set
$$
L_x:=\left\{y\in\mathbb{R}^2:|y-x|\in[s_\varepsilon,t_\varepsilon]\text{\ and\
}\frac{y-x}{|y-x|}\in K_x\right\}
$$
is included in $D_\varepsilon$ and has Lebesgue measure larger than $\frac{\underline{\sigma}}{2}(t_\varepsilon^2-s_\varepsilon^2)>0$.

\begin{figure}[h]
  \begin{center}
    \includegraphics[height=5.5cm]{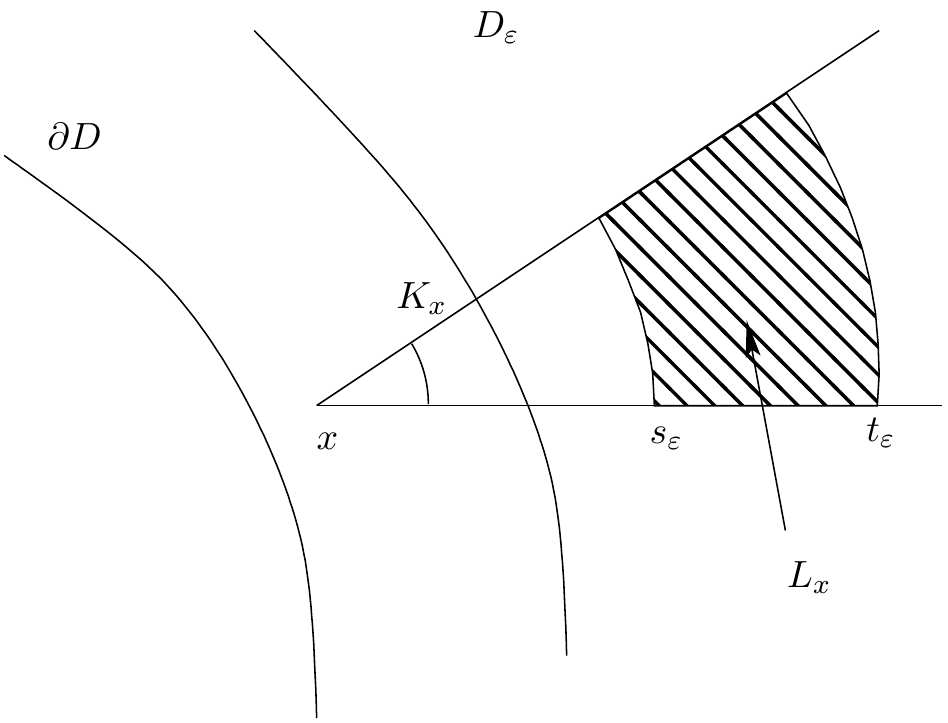}
  \end{center}
  \caption{The sets $K_x$ and $L_x$ of Assumption~(B2).}
  \label{fig:cone}
\end{figure}

These assumptions are true for example if $\partial D$ is a $C^2$ connected compact manifold, since then the so-called interior
sphere condition entails the existence of a cone $K_x$ satisfying~(B2) provided $\varepsilon$ is small enough compared to the maximum
curvature of the manifold.

\begin{thm}
  \label{thm:neutron}
  Assumption {\upshape (B)} implies {\upshape (i--vi)} in Theorem~\ref{thm:QSD_full}.
\end{thm}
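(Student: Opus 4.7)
The plan is to verify Assumption (A) of Theorem~\ref{thm:QSD_full} on the state space $E=D\times S^2$, which, by the equivalence (i)$\Leftrightarrow$(iv), immediately yields the conclusion. I would take as minorizing measure $\nu$ the normalized restriction of $\mathrm{Leb}\otimes\sigma$ to $K\times S^2$, where $K$ is a closed ball contained in $D_{2\varepsilon}$ (non-empty up to halving $\varepsilon$, by (B1)).

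A preliminary observation, needed for (A2), is a uniform exponential tail bound on $\tau_\partial$. Since $D$ is bounded, on each straight-line leg of the trajectory the particle either jumps (at exponential time with rate $\lambda$) or exits $D$ within time $\mathrm{diam}(D)$; hence the probability that a given leg ends in absorption is at least $p_0:=e^{-\lambda\,\mathrm{diam}(D)}>0$, independently of the past. Consequently, the number of legs before absorption is stochastically dominated by a geometric variable of parameter $p_0$, yielding
\begin{align*}
\sup_{(x,u)\in E}\mathbb{P}_{x,u}(t<\tau_\partial)\leq C_1 e^{-\gamma_1 t},\quad\forall t\geq 0,
\end{align*}
for some $C_1,\gamma_1>0$.

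The technical heart is (A1), which I would establish by a two-jump construction using (B2). Starting from any $(x,u)\in E$, condition first on the first jump occurring at time $T_1<\tau_x^u:=\inf\{s>0:x+su\not\in D\}$: this event has probability comparable to $\mathbb{P}_{x,u}(t_0<\tau_\partial)$ up to a factor depending only on $\lambda\,\mathrm{diam}(D)$ (by splitting according to whether $\tau_x^u$ is larger or smaller than $t_0$). On this event, $X_{T_1}=x+T_1 u\in D$ and $V_{T_1}$ is uniform on $S^2$, so by (B2) applied at $X_{T_1}$ the probability that $V_{T_1}\in K_{X_{T_1}}$ is at least $\underline{\sigma}$; on this sub-event, absent further jumps for a duration in $[s_\varepsilon,t_\varepsilon]$ (probability at least $e^{-\lambda t_\varepsilon}$), the particle reaches some point $X_{T_2}\in D_\varepsilon$ without being absorbed. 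Conditioning on a second jump at time $T_3\in(T_2,t_0)$ to a new velocity $w\in S^2$, one has $(X_{t_0},V_{t_0})=(X_{T_2}+(T_3-T_2)V_{T_2}+(t_0-T_3)w,\,w)$; a change of variables from $(T_3,w)$ (together with the remaining free parameters of Phase~1) has Jacobian of order $(t_0-T_3)|1-V_{T_2}\cdot w|$, non-degenerate off a null set, which spreads the law of $(X_{t_0},V_{t_0})$ into a density bounded below on $K\times S^2$. This yields $\mathbb{P}_{x,u}((X_{t_0},V_{t_0})\in A)\geq c_1\nu(A)\mathbb{P}_{x,u}(t_0<\tau_\partial)$, that is (A1).

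Condition (A2) would then follow by combining the preliminary upper bound with a matching exponential lower bound on $\mathbb{P}_\nu(t<\tau_\partial)$ derived from (A1): iterating the decomposition of the conditional distribution at time $t_0$ as $c_1\nu+(1-c_1)\tilde{\mu}_\nu$ gives $\mathbb{P}_\nu(kt_0<\tau_\partial)\geq c_1^{k-1}\mathbb{P}_\nu(t_0<\tau_\partial)^k$, and a bootstrap based on the analogous upper bound
\begin{align*}
\mathbb{P}_{x,u}(t<\tau_\partial)\leq\mathbb{P}_{x,u}(t_0<\tau_\partial)\bigl[c_1\mathbb{P}_\nu(t-t_0<\tau_\partial)+(1-c_1)\sup_{\rho}\mathbb{P}_\rho(t-t_0<\tau_\partial)\bigr]
\end{align*}
shows that $\sup_{(x,u)}\mathbb{P}_{x,u}(t<\tau_\partial)$ and $\mathbb{P}_\nu(t<\tau_\partial)$ share the same exponential decay rate with comparable constants. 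The main obstacle lies in (A1): one must ensure that the favourable two-jump event has probability comparable to $\mathbb{P}_{x,u}(t_0<\tau_\partial)$ \emph{uniformly} in $(x,u)$, including for $x$ near $\partial D$ with $u$ pointing outward, where the unconditional survival probability vanishes. This is precisely the role of (B2): the cone $K_{X_{T_1}}$ of admissible post-jump directions has angular measure at least $\underline{\sigma}>0$ independently of how close $X_{T_1}$ lies to $\partial D$, which is what makes the construction robust.
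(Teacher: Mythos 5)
Your overall plan (verify Assumption~(A) and invoke Theorem~\ref{thm:QSD_full}) is of course the right one, and your sketch for~(A1) is in the same spirit as the paper's argument: a two-jump construction combined with~(B2) to steer trajectories from near the boundary into $D_\varepsilon$. The paper executes this with care (Lemma~\ref{lem:neutron} gives an explicit lower-bound density for the law of $(X_t,V_t)$ after two jumps, obtained by writing the position as a barycentric combination inside a triangle; Lemma~\ref{prop:neutron} then chains small balls covering $D_\varepsilon$), and it treats separately the case $x\in D_\varepsilon$ and the two sub-cases of $x\notin D_\varepsilon$ depending on whether the straight line from $(x,u)$ first meets $D_\varepsilon$ or $\partial D$. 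Your change-of-variables heuristic is plausible but not carried through; note in particular that $(T_3,w)$ is only two-dimensional while $(X_{t_0},V_{t_0})$ is three-dimensional, so one must genuinely use an extra time parameter as the paper does, and the resulting density degenerates near $|z-x|=t$, which is why the paper works with $D_\varepsilon$ and chains balls.

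The serious gap is in~(A2). You assert that~(A1) together with the uniform exponential upper bound $\sup_{(x,u)}\mathbb{P}_{x,u}(t<\tau_\partial)\le C_1 e^{-\gamma_1 t}$ implies~(A2) via a bootstrap on the decomposition $\mathbb{P}_x(X_{t_0}\in\cdot\mid t_0<\tau_\partial)=c_1\nu+(1-c_1)\tilde\mu_x$. This does not close. Writing $M(t)=\sup_{(x,u)}\mathbb{P}_{x,u}(t<\tau_\partial)$ and $N(t)=\mathbb{P}_\nu(t<\tau_\partial)$, the decomposition gives
\begin{align*}
M(t)\le M(t_0)\bigl[c_1 N(t-t_0)+(1-c_1)M(t-t_0)\bigr],\qquad N(t)\ge c_1 N(t_0)N(t-t_0),
\end{align*}
and hence for $R(t)=M(t)/N(t)$ one only obtains
\begin{align*}
R(t)\le \frac{(1-c_1)M(t_0)}{c_1 N(t_0)}\,R(t-t_0)+\frac{M(t_0)}{N(t_0)}.
\end{align*}
Since $M(t_0)\ge N(t_0)$, the contraction factor $\frac{(1-c_1)M(t_0)}{c_1 N(t_0)}$ is at least $(1-c_1)/c_1$, which is $\ge 1$ whenever $c_1\le 1/2$, so the iteration does not force $R$ to stay bounded. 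Indeed~(A1) alone, or together with a uniform exponential upper tail, does not imply~(A2): one needs a genuine comparison in the opposite direction, namely an upper bound for $\mathbb{P}_{x,u}(t<\tau_\partial)$ in terms of $\mathbb{P}_\nu$. The paper obtains this by proving two density estimates for the spatial position at jump times, namely $\mathbb{P}_{x,u}(J_4<\infty,\,X_{J_4}\in dz)\le C\Lambda(dz)$ (via the explicit density of $VZ$ and Young's inequality to get an $L^\infty$ bound after three convolutions) and $\mathbb{P}_\nu(J_1<\infty,\,X_{J_1}\in dz)\ge c\,\mathds 1_D(z)\Lambda(dz)$ (using~(B2) to ensure visibility of $z$ from a set of $x\in D_\varepsilon$ of uniformly positive Lebesgue measure). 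Combining these with the observation that $V_{J_k}$ is uniform on $S^2$ and independent of $X_{J_k}$ then yields $\mathbb{P}_{x,u}(t<\tau_\partial)\le \tfrac{C}{c}\mathbb{P}_\nu(t-4\,\mathrm{diam}(D)<\tau_\partial)$, and only at this point is~(A1) used to remove the time shift. Some analogue of this explicit Gaussian-free density comparison is the missing ingredient in your argument.
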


\begin{proof}
In all the proof, we will make use of the following notation: for all $k\geq 1$, let $J_k$ be the $k$-th jump time of $V_t$ (the
absorption time is not considered as a jump, so $J_{k+1}=\infty$ if $J_k<\infty$ and $X_t$ hits $\partial D$ after $J_k$ and before
the $(k+1)$-th jump of $V_t$).

Let us first prove (A1). The following properties are easy consequences of the boundedness of $D$ and Assumption~(B).
\begin{lem}
  \label{prop:neutron}
  \begin{description}
  \item[\textmd{(i)}] There exists $n\geq 1$ and $x_1,\ldots,x_n\in D_{\varepsilon}$ such that $D_{\varepsilon}\subset\bigcup_{i=1}^n
    B(x_i,\varepsilon/16)$.
  \item[\textmd{(ii)}] For all $x,y\in D_{\varepsilon}$, there exists $m\leq n$ and $i_1,\ldots, i_m$ distinct in $\{1,\ldots,n\}$
    such that $x\in B(x_{i_1},\varepsilon/16)$, $y\in B(x_{i_m},\varepsilon/16)$ and for all $1\leq j\leq m-1$,
    $B(x_{i_j},\varepsilon/16)\cap B(x_{i_{j+1}},\varepsilon/16)\not=\emptyset$.
  \end{description}
\end{lem}




The next lemma is proved just after the current proof.
\begin{lem}
  \label{lem:neutron}
  For all $x\in D$, $u\in S^2$ and $t>0$ such that $d(x,\partial D)>t$, the law of $(X_t,V_t)$ under $\mathbb{P}_{x,u}$ satisfies
  $$
  \mathbb{P}_{x,u}(X_t\in dz,\,V_t\in dv)\geq \frac{\lambda^2 e^{-\lambda t}}{4\pi t}\, \frac{(t-|z-x|)^2}{t+|z-x|}\mathds{1}_{z\in
    B(x,t)}\Lambda(dz)\sigma(dv),
  $$
  where $\Lambda$ is Lebesgue's measure on $\mathbb{R}^2$.
\end{lem}

This lemma has the following immediate consequence. Fix $i\not=j$ in $\{1,\ldots,n\}$ such that $B(x_i,\varepsilon/16)\cap
B(x_j,\varepsilon/16)\not=\emptyset$. Then, for all $x\in B(x_i,\varepsilon/16)$ and $u\in S^2$,
$$
\mathbb{P}_{x,u}(X_{\varepsilon/2}\in dz,\,V_{\varepsilon/2}\in dv)\geq C_\varepsilon\mathds{1}_{B(x_j,\varepsilon/8)\cup
  B(x_i,\varepsilon/8)}(z)\Lambda(dz)\,\sigma(dv),
$$
for a constant $C_\varepsilon>0$ independent of $x$, $i$ and $j$.

Combining this result with Lemma~\ref{prop:neutron}, one easily deduces that for all $x\in D_\varepsilon$, $u\in S^2$ and $m\geq n$,
\begin{equation*}
  \mathbb{P}_{x,u}(X_{m\varepsilon/2}\in dz,\,V_{m\varepsilon/2}\in dv)\geq
  C_\varepsilon c^{m-1}_\varepsilon\mathds{1}_{D_\varepsilon}(z)\Lambda(dz)\,\sigma(dv),
\end{equation*}
where $c_\varepsilon=C_\varepsilon\Lambda(B(\varepsilon/16))=C_\varepsilon\pi\varepsilon^2/256$. Proceeding similarly, but with a
first time step of length in $[\varepsilon/2,\varepsilon)$, we can also
deduce from Lemma~\ref{lem:neutron} that, for all $t\geq n\varepsilon/2$,
\begin{equation}
  \label{eq:neutron-1}
  \mathbb{P}_{x,u}(X_{t}\in dz,\,V_{t}\in dv)\geq C'_\varepsilon c^{\lfloor
    2t/\varepsilon\rfloor-1}_\varepsilon\mathds{1}_{D_\varepsilon}(z)\Lambda(dz)\,\sigma(dv)
\end{equation}
for a contant $C'_\varepsilon>0$.

This entails~(A1) with $\nu$ the uniform probability measure on $D_\varepsilon\times S^2$ and any $t_0\geq n\varepsilon/2$, but only
for initial conditions in $D_\varepsilon\times S^2$.

Now, assume that $x\in D\setminus D_\varepsilon$ and $u\in S^2$. Let 
$$
s=\inf\{t\geq 0: x+tu\in D_\varepsilon\cup\partial D\}.
$$
If $x+su\in \d D_\varepsilon$, then $\mathbb{P}_{x,u}(X_{s}\in \d D_\varepsilon)\geq e^{-\lambda s}\geq
e^{-\lambda\,\text{diam}(D)}$, and thus, combining this with~\eqref{eq:neutron-1}, for all $t> n\varepsilon/2$,
\begin{multline*}
  \mathbb{P}_{x,u}(X_{s+t}\in dz,\,V_{s+t}\in dv\mid s+t<\tau_\d)\geq
  \mathbb{P}_{x,u}(X_{s+t}\in dz,\,V_{s+t}\in dv) \\ \geq
  e^{-\lambda\,\text{diam}(D)} C'_\varepsilon c^{\lfloor 2t/\varepsilon\rfloor-1}_\varepsilon\mathds{1}_{D_\varepsilon}(z)\Lambda(dz)\,\sigma(dv).  
\end{multline*}
If $x+su\in\partial D$,
\begin{align*}
  \mathbb{P}_{x,u}(X_{t_\varepsilon}\in D_\varepsilon,\,t_\varepsilon<\tau_\d) & \geq
  \mathbb{P}(J_1<s\wedge(t_\varepsilon-s_\varepsilon),\,V_{J_1}\in K_{x+uJ_1},\,J_2>t_\varepsilon) \\ & \geq
  \underline{\sigma}e^{-\lambda t_\varepsilon} \mathbb{P}(J_1<s\wedge(t_\varepsilon-s_\varepsilon)).
\end{align*}
Hence~\eqref{eq:neutron-1} entails, for all $t\geq n\varepsilon/2$ such that $t+t_\varepsilon\geq s$,
\begin{multline*}
  \mathbb{P}_{x,u}(X_{t_\varepsilon+t}\in dz,\,V_{t_\varepsilon+t}\in dv\mid t_\varepsilon+t<\tau_\d) \\ 
  \begin{aligned}
    & \geq \frac{\mathbb{P}_{x,u}(X_{t_\varepsilon}\in D_\varepsilon,\,t_\varepsilon<\tau_\d,\,X_{t_\varepsilon+t}\in
      dz,\,V_{t_\varepsilon+t}\in dv)}{\mathbb{P}_{x,u}(t+t_\varepsilon<\tau_\d)} 
    \\ & \geq \frac{\mathbb{P}(J_1<s\wedge(t_\varepsilon-s_\varepsilon))}{\mathbb{P}(J_1<s)}
    \underline{\sigma}e^{-\lambda t_\varepsilon} c^{\lfloor
      2t/\varepsilon\rfloor+1}_\varepsilon\mathds{1}_{D_\varepsilon}(z)\Lambda(dz)\,\sigma(dv).
  \end{aligned}
\end{multline*}
Since $t_\varepsilon\leq \text{diam}(D)$, we have for all $0<s\leq \text{diam}(D)$
$$
\frac{\mathbb{P}(J_1<s\wedge(t_\varepsilon-s_\varepsilon))}{\mathbb{P}(J_1<s)}\geq
\frac{1-e^{-\lambda(t_\varepsilon-s_\varepsilon)}}{1-e^{-\lambda\,\text{diam}(D)}}>0.
$$
Hence, we have proved (A1) with $\nu$ the uniform probability measure on $D_\varepsilon\times S^2$ and
$t_0=\frac{n\varepsilon}{2}+\text{diam}(D)$.

Now we come to the proof of (A2). This can be done in two steps: first, we prove that for all $x\in D$ and $u\in S^2$,
\begin{equation}
  \label{eq:neutron-3}
  \mathbb{P}_{x,u}(J_4<\infty,\,X_{J_4}\in dz)\leq C\mathds{1}_D(z)\Lambda(dz)  
\end{equation}
for some constant $C$ independent of $x$ and $u$; second
\begin{equation}
  \label{eq:neutron-2}
  \mathbb{P}_\nu(J_1<\infty,\,X_{J_1}\in dz)\geq c\mathds{1}_D(z)\Lambda(dz)
\end{equation}
for some constant $c>0$.

Since for all $k\geq 1$, conditionally on $\{J_k<\infty\}$, $V_{J_k}$ is uniformly distributed on $S^2$ and independent of $X_{J_k}$,
this is enough to conclude as follows: by~\eqref{eq:neutron-3} and the inequality $J_4\leq 4\,\text{diam}(D)$ a.s. on $\tau_\d>t$, for all $t\geq
4\,\text{diam}(D)$,
\begin{align*}
  \mathbb{P}_{x,u}(t<\tau_\d) & \leq\mathbb{E}_{x,u}[\mathbb{P}_{X_{J_4},V_{J_4}}(t-4\,\text{diam}(D)<\tau_\d)] \\ & \leq
  C\iint_D\int_{S^2}\mathbb{P}_{z,v}(t-4\,\text{diam}(D)<\tau_\d)\sigma(dv)\Lambda(dz).
\end{align*}
Similarly,~\eqref{eq:neutron-2} entails that, for all $t\geq \text{diam}(D)$,
$$
\mathbb{P}_\nu(t<\tau_\d)\geq c\iint_D\int_{S^2}\mathbb{P}_{z,v}(t<\tau_\d)\sigma(dv)\Lambda(dz),
$$
and thus, for all $t\geq 5\,\text{diam}(D)$,
$$
\mathbb{P}_{x,u}(t<\tau_\d)\leq\frac{C}{c}\mathbb{P}_\nu(t-4\,\text{diam}(D)<\tau_\d).
$$
Now, it follows from (A1) that $\mathbb{P}_\nu((X_{t_0},V_{t_0})\in\cdot)\geq c_1\mathbb{P}_\nu(t_0<\tau_\d)\nu(\cdot)$ and thus
\begin{align*}
  \mathbb{P}_\nu(t-4\,\text{diam}(D)+t_0<\tau_\d) & =\mathbb{E}_\nu[\mathbb{P}_{X_{t_0},V_{t_0}}(t-4\,\text{diam}(D)<\tau_\d)] \\ &
  \geq c_1\mathbb{P}_\nu(t-4\,\text{diam}(D)<\tau_\d).
\end{align*}
Iterating this inequality as needed completes the proof of (A2).

So it only remains to prove~\eqref{eq:neutron-3} and~\eqref{eq:neutron-2}. We start with~\eqref{eq:neutron-3}. We denote by
$(\hat{X}_t,\hat{V}_t)_{t\geq 0}$ the neutron transport process in $\hat{D}=\mathbb{R}^2$, coupled with $(X,V)$ such that
$\hat{X}_t=X_t$ and $\hat{V}_t=V_t$ for all $t<\tau_\d$. We denote by $\hat{J}_1<\hat{J}_2<\ldots$ the jumping times of $\hat{V}$. It
is clear that $\hat{J}_k=J_k$ for all $k\geq 1$ such that $J_k<\infty$.

Now, $\hat{X}_{\hat{J}_4}=Y_1+Y_2+Y_3+Y_4$, where the r.v. $Y_1,\ldots,Y_4$ are independent, $Y_1=x+u Z$, where $Z$ is an exponential
r.v.\ of parameter $\lambda$, and $Y_2,Y_3,Y_4$ are all distributed as $V Z$, where $V$ is uniform on $S^2$ and independent of $Z$.
Using the change of variable from polar to Cartesian coordinates, one checks that $VZ$ has density $g(z)=\frac{\lambda
  e^{-\lambda|z|}}{2\pi|z|}$ w.r.t. $\Lambda(dz)$, so that $g\in L^{3/2}(\mathbb{R}^2)$. Applying twice Young's inequality, one has
$g*g*g\in L^\infty(\mathbb{R}^2)$. Hence, for all $f\in \mathcal{B}(\mathbb{R}^2)$,
$$
\mathbb{E}_{x,u}[f(X_{J_4});\,J_4<\tau_\d]\leq\mathbb{E}[f(\hat{X}_{\hat{J}_4})]\leq \|g*g*g\|_{\infty}\iint_{\mathbb{R}^2}f(z)\Lambda(dz).
$$
Hence~\eqref{eq:neutron-3} is proved.

We finally prove~\eqref{eq:neutron-2}. For all $x,y\in\mathbb{R}^2$, we denote by $[x,y]$ the segment delimited by $x$ and $y$. For
all $f\in\mathcal{B}(\mathbb{R}^2)$,
\begin{align*}
  \mathbb{E}_\nu[f(X_{J_1});\,J_1<\infty] & =\iint_{D_\varepsilon}\frac{\Lambda(dx)}{\Lambda(D_\varepsilon)}\int_{S^2}\sigma(du)\int_0^\infty
  \mathds{1}_{[x,x+su]\subset D}\,\lambda e^{-\lambda s}f(x+su)\,ds \\
  & =\iint_{D_\varepsilon}\frac{\Lambda(dx)}{\Lambda(D_\varepsilon)}\iint_{D}\Lambda(dz)\mathds{1}_{[x,z]\subset
    D}\,\frac{\lambda e^{-\lambda |z-x|}}{2\pi|z-x|}f(z) \\
  & \geq \frac{\lambda e^{-\lambda\,\text{diam}(D)}}{2\pi\text{diam}(D) \Lambda(D_\varepsilon)}\iint_{D}\Lambda(dz) f(z)\iint_{D_\varepsilon}\Lambda(dx)\mathds{1}_{[x,z]\subset
    D}.
\end{align*}
Now, for all $z\in D\setminus D_\varepsilon$, using assumption~(B2),
$$
\iint_{D_\varepsilon}\mathds{1}_{[x,z]\subset
  D}\,\Lambda(dx)\geq\Lambda(L_z)\geq\frac{\underline{\sigma}}{2}(t_\varepsilon^2-s_\varepsilon^2),
$$
and for all $z\in D_\varepsilon$,
$$
\iint_{D_\varepsilon}\mathds{1}_{[x,z]\subset
  D}\,\Lambda(dx)\geq\Lambda(D_\varepsilon\cap B(z,\varepsilon)).
$$
Since the map $z\mapsto\Lambda(D_\varepsilon\cap B(z,\varepsilon))$ is continuous and positive on the compact set
$\overline{D_\varepsilon}$, we have proved~\eqref{eq:neutron-2}.
\end{proof}

\begin{proof}[Proof of Lemma~\ref{lem:neutron}]
  Using twice the relation: for all bounded measurable $f$, $k\geq 1$, $t\geq 0$, $x\in D$ and $u\in S^2$,
\begin{multline*}
  \mathbb{E}_{x,u}[f(X_t,V_t);J_k\leq t<J_{k+1}] \\ =\int_0^t ds\lambda e^{-\lambda
    s}\int_{S^2}\sigma(dv)\mathbb{E}_{x+su,v}[f(X_{t-s},V_{t-s});J_{k-1}\leq t-s< J_k],
\end{multline*}
we obtain
\begin{multline*}
  \mathbb{E}_{x,u}[f(X_t,V_t);J_2\leq t< J_3]=\lambda^2 e^{-\lambda t} \\ \int_{S^2}\sigma(dv) \int_{S^2}\sigma(dw)
  \int_0^t ds\int_0^{t-s} d\theta f(x+su+\theta v+(t-s-\theta)w,w).
\end{multline*}
For all $x,y,z\in \mathbb{R}^2$, we denote by $[x,y,z]$ the triangle of $\mathbb{R}^2$ delimited by $x$, $y$ and $z$. Using the
well-known fact that a point in $[x,y,z]$ with barycentric coordinates distributed uniformly on the simplex is distributed uniformly
on $[x,y,z]$, we deduce that
\begin{multline*}
  \mathbb{E}_{x,u}[f(X_t,V_t);J_2\leq t< J_3] \\ =\frac{\lambda^2t^2}{2} e^{-\lambda t} \int_{S^2}\sigma(dv)
  \int_{S^2}\sigma(dw)\iint_{[u,v,w]} f(x+tz,w)\,\frac{\Lambda(dz)}{\Lambda([u,v,w])}.
\end{multline*}

Now, for all $u,v,w\in S^2$,
$$
\Lambda([u,v,w])=\frac{1}{2}|u-w|\,|v-v'|\leq |u-w|,
$$
where $v'$ is the orthogonal projection of $v$ on the line $(u,w)$ (see fig.~\ref{fig:area-triangle}), and where we used the fact
that $|v-v'|\leq 2$.

\begin{figure}[h]
  \begin{center}
    \includegraphics[height=8cm]{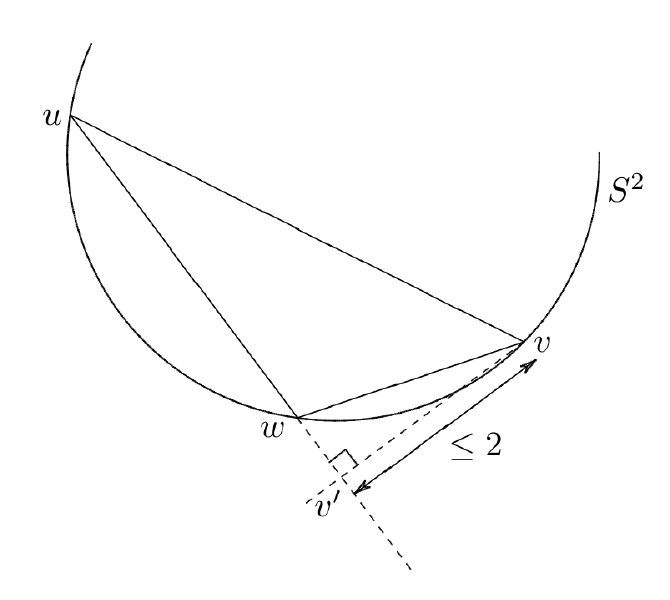}
  \end{center}
  \caption{The triangle $[u,v,w]$ and the point $v'$.}
  \label{fig:area-triangle}
\end{figure}

Moreover, for fixed $t\geq 0$, $x\in D$, $u,w\in S^2$,
\begin{multline*}
  \int_{S^2}\sigma(dv)\iint_{[u,v,w]}
  f(x+tz,w)\,\Lambda(dz) =\frac{1}{2\pi}\int_0^{2\pi}d\theta\iint_{[u,v_\theta,w]}f(x+tz,w)\Lambda(dz) \\
  \begin{aligned}
    & =\frac{1}{2\pi}\int_0^{2\pi}d\theta\iint_{B(1)}f(x+tz,w)\mathds{1}_{z\in[u,v_\theta,w]}\Lambda(dz) \\
    & =\iint_{B(1)} f(x+tz,w)\frac{\wideparen{u_zw_z}}{2\pi}\Lambda(dz), 
  \end{aligned}
\end{multline*}
where $v_\theta=(\cos\theta,\sin\theta)\in\mathbb{R}^2$, $B(r)$ is the ball centered at 0 of radius $r$ of $\mathbb{R}^2$, $u_z$
(resp.\ $w_z$) is the symmetric of $u$ (resp.\ $w$) with respect to $z$ in $S^2$ (see Fig.~\ref{fig:arc}) and $\wideparen{uv}$ is the
length of the arc between $u$ and $v$ in $S^2$.

\begin{figure}[h]
  \begin{center}
    \includegraphics[height=8cm]{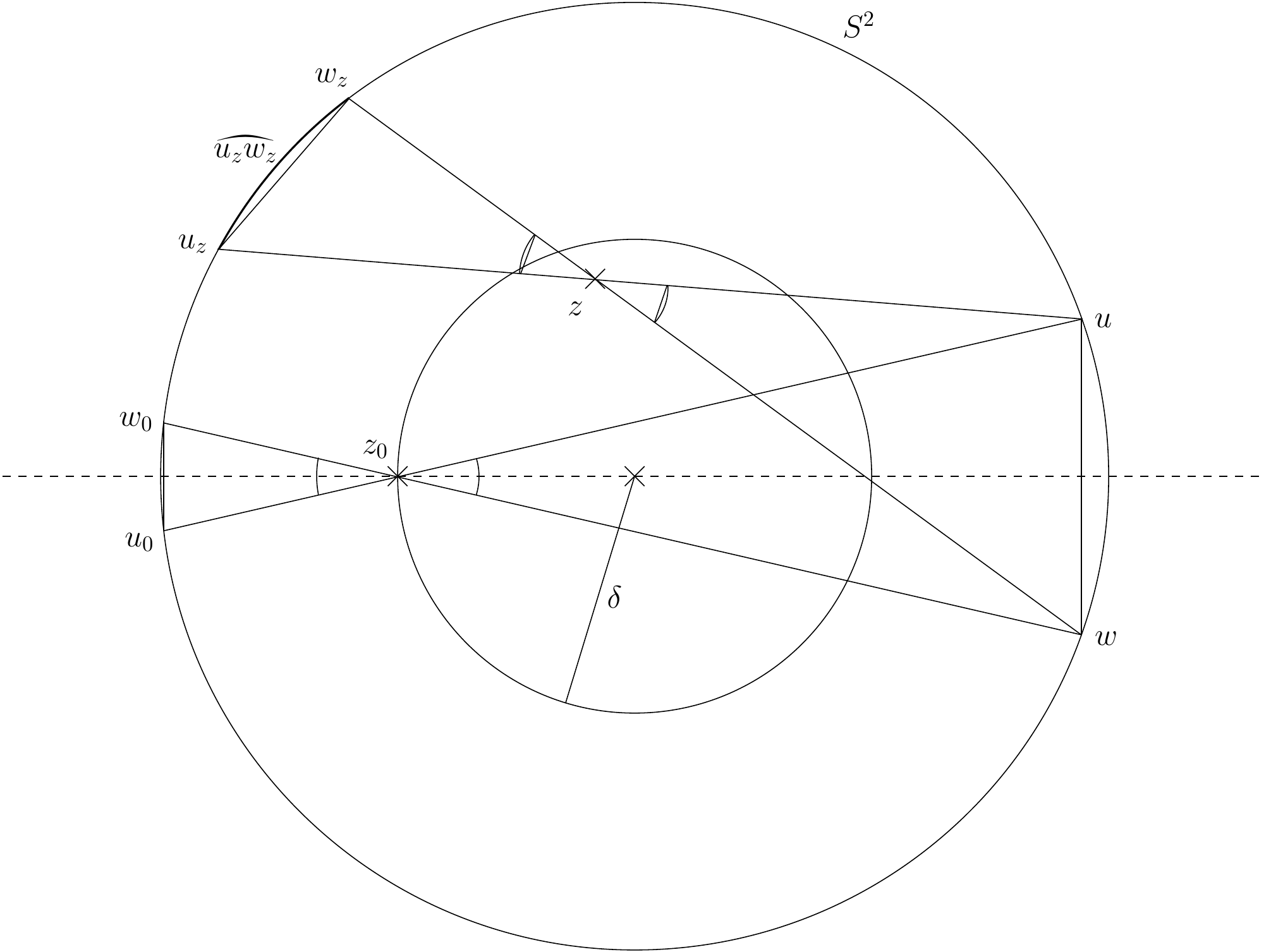} 
  \end{center}
  \caption{Definition of $u_z$, $w_z$, $\wideparen{u_zw_z}$, $z_0$, $u_0$ and $w_0$. The angle marked $\rrparenthesis$ is larger than
    the one marked $)$.}
  \label{fig:arc}
\end{figure}

Fix $0<\delta<1$, and let $z_0$ be the farthest point in $B(\delta)$ from $\{u,w\}$ (this point is unique except when the segment
$[u,w]$ between $u$ and $w$ is a diameter of $S^2$). We set $u_0=u_{z_0}$ and $w_0=w_{z_0}$ (see Fig.~\ref{fig:arc}). Note that Thales' theorem implies that
$$
|u_0-w_0|=\frac{1-\delta}{1+\delta}|u-w|.
$$
Then, for any $z\in B(\delta)$, we have $\widehat{uz_0w}\leq\widehat{uzw}$, where $\widehat{xyz}$ is the measure of the angle formed
by the segments $[y,x]$ and $[y,z]$. Since in addition $|z_0-u_0|=|z_0-w_0|\leq 1$ and $|z-u_z|\wedge|z-w_z|\geq 1-\delta$, Thales'
theorem yields
$$
\frac{|u_z-w_z|}{1-\delta}\geq |u_0-w_0|.
$$
%
%
%
Puting everything together, we deduce that
\begin{align*}
  \mathbb{E}_{x,u}[f(X_t,V_t); & J_2\leq t< J_3] \\
  & \geq\frac{\lambda^2t^2}{4\pi} e^{-\lambda t} \int_{S^2}\sigma(dw)\iint_{B(1)}f(x+tz,w)\frac{|u_z-w_z|}{|u-w|}\Lambda(dz) \\
  & \geq\frac{\lambda^2t^2}{4\pi} e^{-\lambda t} \int_{S^2}\sigma(dw)\iint_{B(1)}f(x+tz,w)\frac{(1-|z|)^2}{1+|z|}\Lambda(dz).
\end{align*}
This ends the proof of Lemma~\ref{lem:neutron}.
\end{proof}



\section{Quasi-stationary
  distribution: proofs of the results of Section~\ref{sec:results-QSD}}
\label{sec:QSD}

This section is devoted to the proofs of Theorem~\ref{thm:QSD_full} (Sections~\ref{sec:ii-implies-iv} and~\ref{sec:vi-implies-iii}), Corollary~\ref{cor:1} (Section~\ref{sec:proof-of-cor1}), Proposition~\ref{prop:2} (Section~\ref{sec:pf-prop:2}) and Corollary~\ref{cor:2} (Section~\ref{sec:proof-of-cor2}).
In Theorem~\ref{thm:QSD_full}, the implications (iii)$\Rightarrow$(i)$\Rightarrow$(ii) and (iv)$\Rightarrow$(v)$\Rightarrow$(vi) are obvious so we only need to prove (ii)$\Rightarrow$(iv) and (vi)$\Rightarrow$(iii).

%


\subsection{(ii) implies~(iv)}
\label{sec:ii-implies-iv}
Assume that $X$ satisfies Assumption~(A'). We shall prove the result assuming (A1') holds for $t_0=1$. The extension to any $t_0$ is immediate.

\medskip
\noindent
\textit{Step 1: control of the distribution at time 1, conditionally on non-absorption at a later time.}\\
Let us show that, for all $t\geq 1$ and for all $x_1,x_2\in E$, there exists a probability measure $\nu^t_{x_1,x_2}$ on $E$ such that, for all measurable set $A\subset E$,
\begin{align}
\label{eq:control-of-distribution}
\PP_{x_i}\left(X_1\in A\mid t<\tau_\partial\right)\geq c_1c_2\nu^t_{x_1,x_2}(A),\text{ for }i=1,2.
\end{align}
Fix $x_1,x_2\in E$, $i\in\{1,2\}$, $t\geq 1$ and a measurable subset $A\subset E$. Using the Markov property, we have
\begin{align*}
\PP_{x_i}\left(X_1\in A\text{ and }t<\tau_\partial\right)
&=\EE_{x_i}\left[\11_A(X_1)\PP_{X_1}\left(t-1<\tau_{\partial}\right)\right]\\
&=\EE_{x_i}\left[\11_A(X_1)\PP_{X_1}\left(t-1<\tau_{\partial}\right)\mid 1<\tau_{\partial}\right]\PP_{x_i}\left(1<\tau_{\partial}\right)\\
&\geq c_1\nu_{x_1,x_2}\left(\11_A(\cdot)\PP_{\cdot}\left(t-1<\tau_{\partial}\right)\right)\PP_{x_i}\left(1<\tau_{\partial}\right),
\end{align*}
by Assumption~(A1'). 
Dividing both sides by $\PP_{x_i}\left(t<\tau_{\partial}\right)$, we deduce that
\begin{align*}
\PP_{x_i}\left(X_1\in A\mid t<\tau_\partial\right)
&\geq 
c_1\nu_{x_1,x_2}\left(\11_A(\cdot)\PP_{\cdot}\left(t-1<\tau_{\partial}\right)\right)
\frac{\PP_{x_i}\left(1<\tau_{\partial}\right)}
{\PP_{x_i}\left(t<\tau_{\partial}\right)}.
\end{align*}
Using again the Markov property, we have
\begin{align*}
\PP_{x_i}\left(t<\tau_{\partial}\right)\leq \PP_{x_i}\left(1<\tau_{\partial}\right)\sup_{y\in E}\PP_y\left(t-1<\tau_{\partial}\right),
\end{align*}
so that
\begin{align*}
\PP_{x_i}\left(X_1\in A\mid t<\tau_\partial\right)
&\geq c_1
\frac{\nu_{x_1,x_2}\left(\11_A(\cdot)\PP_{\cdot}\left(t-1<\tau_{\partial}\right)\right)}
{\sup_{y\in E}\PP_y\left(t-1<\tau_{\partial}\right)}.
\end{align*}
Now Assumption~(A2') implies that the non-negative measure
\begin{align*}
B\mapsto \frac{\nu_{x_1,x_2}\left(\11_B(\cdot)\PP_{\cdot}\left(t-1<\tau_{\partial}\right)\right)}
{\sup_{y\in E}\PP_y\left(t-1<\tau_{\partial}\right)}
\end{align*}
has a total mass greater than $c_2$. Therefore~\eqref{eq:control-of-distribution} holds with
\begin{align*}
\nu^t_{x_1,x_2}(B)= \frac{\nu_{x_1,x_2}\left(\11_B(\cdot)\PP_{\cdot}\left(t-1<\tau_{\partial}\right)\right)}
{\PP_{\nu_{x_1,x_2}}\left(t-1<\tau_{\partial}\right)}
\end{align*}

\bigskip
\noindent
\textit{Step 2: exponential contraction for Dirac initial distributions}\\
 We now prove that, for all $x,y\in E$ and $T\geq 0$
\begin{align}
\label{eq:thm1-step2}
\left\|\PP_x\left(X_T\in \cdot\mid T<\tau_{\partial}\right)-\PP_y\left(X_T\in \cdot\mid T<\tau_{\partial}\right)\right\|_{TV}\leq 2(1-c_1c_2)^{\lfloor T \rfloor}.
\end{align}
Let us define, for all $0 \leq s \leq
 t\leq T$ the linear operator $R_{s,t}^T$ by
\begin{align*}
  R_{s,t}^T f(x)&= \EE_x(f(X_{t-s})\mid T-s< \tau_{\partial})\\
                &= \EE(f(X_{t})\mid X_s=x,\ T< \tau_{\partial}),
\end{align*}
by the Markov property. For any $T>0$, the
family $(R_{s,t}^T)_{0\leq s\leq t\leq T}$ is a Markov semi-group:
we
have, for all $0\leq u\leq s\leq t\leq T$ and all bounded measurable function $f$,
 \begin{equation*}
   R_{u,s}^T (R_{s,t}^T f)(x) = R_{u,t}^T f(x).
 \end{equation*}
This can be proved as Lemma~12.2.2 in~\cite{DelMoral2013} or by observing that a Markov process conditioned by an event in its tail $\sigma
$-field remains Markovian (but no longer time-homogeneous).


For any $x_1,x_2\in E$, we have by~\eqref{eq:control-of-distribution} that $\delta_{x_i} R_{s,s+1}^T-c_1 c_2\nu^{T-s}_{x_1,x_2}$ is a
positive measure whose mass is  $1-c_1 c_2$, for $i=1,2$. We deduce that
 \begin{multline*}
   \left\|\delta_{x_1} R_{s,s+1}^T-\delta_{x_2} R_{s,s+1}^T \right\|_{TV}\\
   \begin{aligned}
   &\leq
   \|\delta_{x_1} R_{s,s+1}^T - c_1 c_2 \nu^{T-s}_{x_1,x_2}\|_{TV}
   +\|\delta_{x_2} R_{s,s+1}^T - c_1 c_2 \nu^{T-s}_{x_1,x_2}\|_{TV}\\
   &\leq
   2(1-c_1 c_2).
   \end{aligned}
 \end{multline*}

Let $\mu_1,\mu_2$ be two
 mutually singular probability measures on $E$ and any $f\geq 0$, we have
 \begin{align*}
   \left\|\mu_1 R_{s,s+1}^T-\mu_2 R_{s,s+1}^T \right\|_{TV}
   &\leq \iint_{E^2} 
   \left\|\delta_{x} R_{s,s+1}^T-\delta_{y} R_{s,s+1}^T \right\|_{TV}
   d\mu_1\otimes d\mu_2(x,y)\\
   &\leq 2(1-c_1c_2)=(1-c_1c_2)\|\mu_1-\mu_2\|_{TV}.
   \end{align*}

 Now if $\mu_1$ and $\mu_2$ are any two different probability measures (not necessarily mutually singular), one can apply the previous result to the
 mutually singular probability measures
 $\frac{(\mu_1-\mu_2)_+}{(\mu_1-\mu_2)_+(E)}$ and
 $\frac{(\mu_1-\mu_2)_-}{(\mu_1-\mu_2)_-(E)}$. Then
 \begin{multline*}
  \left\|\frac{(\mu_1-\mu_2)_+}{(\mu_1-\mu_2)_+(E)} R_{s,s+1}^T-\frac{(\mu_1-\mu_2)_-}{(\mu_1-\mu_2)_-(E)} R_{s,s+1}^T \right\|_{TV}\\
   \leq (1-c_1 c_2) \left\|\frac{(\mu_1-\mu_2)_+}{(\mu_1-\mu_2)_+(E)}-\frac{(\mu_1-\mu_2)_-}{(\mu_1-\mu_2)_-(E)}\right\|_{TV}.
 \end{multline*}
Since $\mu_1(E)=\mu_2(E)=1$, we have
 $(\mu_1-\mu_2)_+(E)=(\mu_1-\mu_2)_-(E)$. So multiplying the last inequality by $(\mu_1-\mu_2)_+(E)$, we
 deduce that
 \begin{multline*}
   \|(\mu_1-\mu_2)_+ R_{s,s+1}^T-(\mu_1-\mu_2)_- R_{s,s+1}^T \|_{TV}\\
   \leq (1-c_1c_2) \|(\mu_1-\mu_2)_+-(\mu_1-\mu_2)_-\|_{TV}.
 \end{multline*}
 Since $(\mu_1-\mu_2)_+-(\mu_1-\mu_2)_-=\mu_1-\mu_2$, we obtain
 \begin{equation*}
 \label{eq:ergodicite1}
   \|\mu_1 R_{s,s+1}^T-\mu_2 R_{s,s+1}^T \|_{TV}\leq (1-c_1c_2) \|\mu_1-\mu_2\|_{TV}.
 \end{equation*}
 Using the semi-group property of $(R_{s,t}^T)_{s,t}$,
 we deduce that, for any $x,y\in E$,
 \begin{align*}
   \|\delta_x R_{0,T}^T - \delta_y R_{0,T}^T\|_{TV}
   &=\|\delta_x R^T_{0,T-1} R_{T-1,T}^T - \delta_y R_{0,T-1}^T R_{T-1,T}^T\|_{TV}\\
           &\leq \left(1-c_1c_2\right)\|\delta_x R_{0,T-1}^T - \delta_y R_{0,T-1}^T\|_{TV}\\
           &\leq\ \ldots\ \leq  2 \left(1-c_1c_2\right)^{\lfloor T\rfloor}.
 \end{align*}
By definition of $R_{0,T}^T$, this inequality immediately leads to~\eqref{eq:thm1-step2}.

\bigskip
\noindent
\textit{Step 3: exponential contraction for general initial distributions}\\
We prove now that inequality~\eqref{eq:thm1-step2} extends to any pair of
 initial probability measures $\mu_1,\mu_2$ on $E$, that is,  for all $T\geq 0$,
\begin{align}
\label{eq:thm1-step3}
\left\|\PP_{\mu_1}\left(X_T\in \cdot\mid T<\tau_{\partial}\right)-\PP_{\mu_2}\left(X_T\in \cdot\mid T<\tau_{\partial}\right)\right\|_{TV}\leq 2(1-c_1c_2)^{\lfloor T \rfloor}.
\end{align}
Let $\mu_1$ be a probability measure on
 $E$ and $x\in E$. We have
\begin{align*}
  &\|\PP_{\mu_1}(X_T\in\cdot\mid T<\tau_\partial)-\PP_{x}(X_T\in\cdot\mid T<\tau_\partial)\|_{TV}\\
  &=\frac{1}{\PP_{\mu_1}(T<\tau_\partial)}\|\PP_{\mu_1}(X_T\in\cdot)-\PP_{\mu_1}(T<\tau_\partial)\PP_{x}(X_T\in\cdot\mid T<\tau_\partial)\|_{TV}\\
  &\leq
  \frac{1}{\PP_{\mu_1}(T<\tau_\partial)} \int_{y\in E}
  \|\PP_{y}(X_T\in\cdot)-\PP_{y}(T<\tau_\partial)\PP_x(X_T\in\cdot\mid T<\tau_\partial)\|_{TV}d\mu_1(y)\\
  &\leq   \frac{1}{\PP_{\mu_1}(T<\tau_\partial)} \int_{y\in E}
  \PP_{y}(T<\tau_\partial)\|\PP_{y}(X_T\in\cdot\mid T<\tau_\partial)-\PP_x(X_T\in\cdot\mid T<\tau_\partial)\|_{TV}d\mu_1(y)\\
  &\leq  \frac{1}{\PP_{\mu_1}(T<\tau_\partial)} \int_{y\in E}
  \PP_{y}(T<\tau_\partial) 2 (1-c_1c_2)^{\lfloor T\rfloor}d\mu_1(y)\\
  &\leq 2 (1-c_1c_2)^{\lfloor T\rfloor}.
\end{align*}
The same computation, replacing $\delta_x$ by any probability measure,
leads to~\eqref{eq:thm1-step3}.

\bigskip
\noindent
\textit{Step 4: existence and uniqueness of a quasi-stationary distribution for $X$.}\\
Let us first prove the uniqueness of the quasi-stationary distribution. If $\alpha_1$ and
$\alpha_2$ are two quasi-stationary distributions, then we have $\PP_{\alpha_i}(X_t\in\cdot|t<
\tau_{\partial})=\alpha_i$ for $i=1,2$ and any $t\geq 0$. Thus, we deduce from
inequality~\eqref{eq:thm1-step3} that
\begin{equation*}
  \|\alpha_1-\alpha_2\|_{TV}\leq 2 (1-c_1c_2)^{\lfloor t\rfloor},\ \forall t\geq 0,
\end{equation*}
which yields $\alpha_1=\alpha_2$.

 Let us now prove the existence of a QSD. By
\cite[Proposition~1]{MV12}, this is equivalent to prove the
existence of a quasi-limiting distribution for $X$.
So we only need to prove that
$\PP_x(X_t\in\cdot|t<\tau_{\partial})$ converges when $t$ goes to infinity, for some $x\in E$.
 We have, for all $s,t\geq 0$ and $x\in E$,
 \begin{align}
 \PP_x\left(X_{t+s}\in\cdot\mid t+s<\tau_{\partial}\right)
 &=\frac{\delta_x P_{t+s}}{\delta_x P_{t+s}\11_E}
 =\frac{\delta_x P_tP_s}{\delta_x P_t P_s\11_E}=\frac{\delta_x R_{0,s}^s P_t}{\delta_x R_{0,s}^sP_t\11_E}\notag\\
 &=\PP_{\delta_x R_{0,s}^s}\left(X_t\in\cdot\mid t<\tau_{\partial}\right),\label{eq:semigroup2}
 \end{align}
 where we use the identity $R_{0,s}^sf(x)=\frac{P_s f(x)}{P_s\11_E(x)}$ for the third equality.
Hence
\begin{align*}
  \|\PP_x(X_t\in\cdot|t<\tau_{\partial})&-\PP_x(X_{t+s}\in\cdot|t+s<\tau_{\partial})\|_{TV}\\
  &=\|\PP_x(X_t\in\cdot|t<\tau_{\partial})-\PP_{\delta_x R_{0,s}^{s}}(X_{t}\in\cdot|t<\tau_{\partial})\|_{TV}\\
  &\leq 2 \left(1-c_1c_2\right)^{\lfloor t\rfloor}
  \xrightarrow[s,t\rightarrow+\infty]{} 0.
\end{align*}
In particular the sequence $(\PP_x(X_t\in\cdot\mid t<\tau_{\partial}))_{t\geq 0}$ is a Cauchy
sequence for the total variation norm. The space of probability
measures on $E$ equipped with the total variation norm is complete,
so $\PP_x(X_t\in\cdot\mid t<\tau_{\partial})$ converges when $t$ goes to
infinity to some probability measure $\alpha$ on $E$. 

Finally Equation~\eqref{eq:expo-cv-explicit} follows from~\eqref{eq:thm1-step3} with $\mu_1=\mu$ and $\mu_2=\alpha$. Therefore we have proved (iv) and the last statement of Theorem~\ref{thm:QSD_full} concerning existence and uniqueness of a quasi-stationary distribution and the explicit expression for $C$ and $\gamma$.



\subsection{(vi) implies (iii)}
\label{sec:vi-implies-iii}
Assume that \eqref{eq:epsilon-t} holds with some probability measure $\alpha$ on $E$. Let us define
\begin{align*}
\varepsilon(t)=\sup_{x\in E}\left\|\P_x(X_t\in\cdot\mid t<\tau_\d)-\alpha\right\|_{TV}.
\end{align*}

\bigskip
\noindent
\textit{Step 1: $\varepsilon(\cdot)$ is non-increasing and $\alpha$ is a quasi-stationary distribution}\\
For all $s,t\geq 0$, $x\in E$ and $A\in{\cal E}$,
\begin{align*}
|\P_x(X_{t+s}\in A &\mid t+s<\tau_\d)-\alpha(A)\\
&=\left|\frac{\E_x\left\{\11_{t<\tau_\d}\P_{X_t}(s<\tau_\d)\left[\P_{X_t}(X_s\in A\mid s<\tau_\d)-\alpha(A)\right]\right\}}{\P_x(t+s<\tau_\d)}\right|\\
&\leq \frac{\E_x\left\{\11_{t<\tau_\d}\P_{X_t}(s<\tau_\d)\left|\P_{X_t}(X_s\in A\mid s<\tau_\d)-\alpha(A)\right|\right\}}{\P_x(t+s<\tau_\d)}\\
&\leq \varepsilon(s).
\end{align*}
Taking the supremum over $x$ and $A$, we deduce that the function $\varepsilon(\cdot)$ is non-increasing. By~\eqref{eq:epsilon-t}, this implies that $\varepsilon(t)$ goes to $0$ when $t\rightarrow+\infty$. By Step~4 above, $\alpha$ is a quasi-stationary distribution and there exists $\lambda_0>0$ such $\PP_{\alpha}(t<\tau_{\partial})=e^{-\lambda_0 t}$.

\bigskip
\noindent
\textit{Step 2: proof of~(A$2''$) for $\mu=\alpha$}\\
We define, for all $s\geq 0$,
\begin{align*}
A(s)=\frac{\sup_{x\in E}\PP_x(s<\tau_{\partial})}{\PP_\alpha(s<\tau_{\partial}).}=e^{\lambda_0 s}\sup_{x\in E}\PP_x(s<\tau_{\partial}).
\end{align*}
Our goal is to prove that $A$ is bounded.
The Markov property implies, for $s\leq t$,
\begin{align*}
\PP_x(t<\tau_{\partial})=\PP_x(s<\tau_{\partial})\,\EE\left(\PP_{X_s}(t-s<\tau_{\partial})\mid s<\tau_{\partial}\right).
\end{align*}
By~\eqref{eq:epsilon-t}, the total variation distance between $\alpha$ and ${\cal L}_x(X_s\mid s<\tau_{\d})$ is smaller than $\varepsilon(s)$, so
\begin{align*}
\PP_x(t<\tau_{\partial})\leq \PP_x(s<\tau_{\partial})\,\left(\PP_{\alpha}(t-s<\tau_{\partial})+\varepsilon(s)\sup_{y\in E}\PP_y(t-s<\tau_\d)\right).
\end{align*}
For $s\leq t$, we thus have
\begin{align}
\label{eq:psi}
A(t)\leq A(s)\left(1+\varepsilon(s)A(t-s)\right).
\end{align}
The next lemma proves that
\begin{align}
\label{eq:A2-for-alpha}
\P_{\alpha}(t<\tau_\d)=e^{-\lambda_0 t}\geq c_2(\alpha)\sup_{x\in E}\P_x(t<\tau_\d)
\end{align}
for the constant $c_2(\alpha)=1/\sup_{s>0} A(s)$ and concludes Step~2.
\begin{lem}
\label{le:1}
A function $A:\R_+\mapsto\R_+$ satisfying \eqref{eq:psi} for all $s\leq t$ is bounded.
\end{lem}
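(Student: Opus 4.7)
My plan is to iterate~\eqref{eq:psi} multiplicatively and then linearize by passing to logarithms. Fix any $u>0$ and apply~\eqref{eq:psi} with $t=ku$ and $s=(k-1)u$ for $k\geq 1$ to obtain $A(ku)\leq A((k-1)u)\bigl(1+\varepsilon((k-1)u)A(u)\bigr)$. Taking logarithms and using $\log(1+x)\leq x$ for $x\geq 0$, the telescoping sum yields
\begin{align*}
\log A(nu)\leq \log A(0)+A(u)\sum_{k=0}^{n-1}\varepsilon(ku).
\end{align*}
Because $\varepsilon$ is non-increasing (by Step~1 above) and integrable on $\R_+$ by hypothesis~(vi), a Riemann-sum comparison gives
\begin{align*}
\sum_{k=0}^{n-1}\varepsilon(ku)\leq \varepsilon(0)+\frac{1}{u}\int_0^\infty\varepsilon(s)\,ds<\infty,
\end{align*}
so the sequence $\bigl(A(nu)\bigr)_{n\geq 0}$ is uniformly bounded by a constant depending only on $u$, $A(0)$ and $A(u)$.

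Next I would extend this bound from the grid $u\mathbb{N}$ to all of $\R_+$ by a single additional application of~\eqref{eq:psi}. For $t\in[nu,(n+1)u)$, taking $s=nu$ gives
\begin{align*}
A(t)\leq A(nu)\bigl(1+\varepsilon(nu)A(t-nu)\bigr)\leq A(nu)\Bigl(1+\varepsilon(0)\sup_{r\in[0,u]}A(r)\Bigr),
\end{align*}
which is uniformly bounded as soon as $A$ is bounded on $[0,u]$.

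The only ingredient not produced by the abstract inequality itself is the local boundedness of $A$ on an initial interval; without it one could envision pathological solutions of~\eqref{eq:psi} that blow up near $0$. In the application this is free: by its very definition $A(s)=e^{\lambda_0 s}\sup_{x\in E}\P_x(s<\tau_\d)\leq e^{\lambda_0 s}$, so $A$ is bounded on every compact subset of $\R_+$. I therefore expect the main obstacle to be the bookkeeping step rather than any deep estimate: one must choose the step size $u>0$ carefully and exploit the trade-off between the multiplicative factor $A(u)$ (which grows with $u$) and the factor $1/u$ coming from the Riemann-sum bound, so that the resulting constant $\sup_{s>0}A(s)$ is explicit enough to yield the quantitative formula for $c_2(\alpha)$ announced in~\eqref{eq:ineg-c2-de-alpha}.
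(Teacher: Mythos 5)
Your proof is correct and follows the same core strategy as the paper: iterate~\eqref{eq:psi} along an arithmetic progression $\{ku\}_{k\geq 0}$, linearize via $1+x\leq e^x$ (you take logarithms, which is the same thing), and exploit the summability of $\varepsilon$ along the grid, which follows from~\eqref{eq:epsilon-t} and the monotonicity of $\varepsilon$ established in Step~1. The paper's packaging is slightly slicker: it works with the non-decreasing envelope $\psi(t)=\sup_{0\leq s\leq t}A(s)$, for which the same iteration gives $\psi(Ns)\leq\psi(s)\exp\bigl(\psi(s)\sum_{k\geq 1}\varepsilon(ks)\bigr)$, and the boundedness of $\psi$ (hence of $A$) then follows directly from its monotonicity. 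This spares the separate step you need to extend the bound from the grid $u\mathbb{N}$ to all of $\R_+$, and it also avoids the need to worry about $A$ taking the value $0$ somewhere (the log is never taken).

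Your remark about local boundedness is in fact sharper than what the paper records: both proofs silently assume that $A$ is finite on some initial interval $[0,s]$, equivalently $\psi(s)<\infty$ for some $s>0$. Without this, the lemma as stated abstractly is false: take $A(0)=1$, $A(t)=1/t$ for $t\in(0,1]$, $A(t)=1$ for $t>1$, and any non-increasing integrable $\varepsilon$ with $\varepsilon(0)\geq 1$; one checks directly that~\eqref{eq:psi} holds for all $s\leq t$, yet $A$ is unbounded near the origin. You correctly observe that in the application $A(s)=e^{\lambda_0 s}\sup_{x\in E}\P_x(s<\tau_\d)\leq e^{\lambda_0 s}$, so local boundedness is free; the paper uses this bound in the remark that immediately follows the lemma (to derive~\eqref{eq:ineg-c2-de-alpha}) but does not flag it inside the lemma's proof. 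So your proof is sound, and the extra comment identifies a genuine, if minor, gap in the abstract formulation of the lemma.
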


\begin{proof}
We introduce the non-decreasing function
$
\psi(t)=\sup_{0\leq s\leq t} A_s.
$
It follows from~\eqref{eq:psi} that, for all $s\leq u\leq t$,
\begin{align*}
A(u)\leq \psi(s)\left(1+\varepsilon(s)\psi(t-s)\right).
\end{align*}
Since this inequality holds also for $u\leq s$, we obtain for all $s\leq t$,
\begin{align}
\psi(t)\leq \psi(s)\left(1+\varepsilon(s)\psi(t-s)\right).
\end{align}
By induction, for all $N\geq 1$ and $s\geq 0$,
\begin{align}
\psi(Ns)&\leq \psi(s)\prod_{k=1}^{N-1}\left(1+\varepsilon(ks)\psi(s)\right)\notag\\
&\leq \psi(s)\exp\left(\psi(s)\sum_{k=1}^{N-1} \varepsilon(ks)\right)\notag\\
&\leq \psi(s)\exp\left(\psi(s)\sum_{k=1}^{\infty} \varepsilon(ks)\right),\label{eq:psi-s-lemma}
\end{align}
where $\sum_{k=1}^{\infty} \varepsilon(ks)<\infty$ by~\eqref{eq:epsilon-t}.

Since $\psi$ is non-decreasing, it is bounded.

\end{proof}

\begin{rem}
  Note that, under Assumption~(v), $\varepsilon(t)\leq Ce^{-\gamma t}$. Using the fact that $\psi(s)\leq e^{\lambda_0 s}$, we deduce
  from~\eqref{eq:psi-s-lemma} that, for all $s>0$,
  \begin{align*}
    A(Ns)\leq \exp\left(
      \lambda_0s+\frac{Ce^{(\lambda_0-\gamma)s}}{1-e^{-\gamma s}}
    \right).
  \end{align*}
  This justifies~\eqref{eq:ineg-c2-de-alpha} in Remark~\ref{rem:explicit-constant-c_2}.
\end{rem}

\bigskip
\noindent
\textit{Step 3: proof of (A$2''$)}\\
Applying Step~3 of Section~\ref{sec:ii-implies-iv} with $\mu_1=\rho$ and $\delta_x=\alpha$, we easily obtain
\begin{align*}
\varepsilon(t)=\sup_{\rho\in{\cal M}_1(E)}\left\|\P_{\rho}(X_t\in\cdot\mid t<\tau_\d)-\alpha\right\|_{TV}.
\end{align*}
Let $\mu$ be a probability measure on $E$. For any $s\geq 0$, let us define $\mu_s(\cdot)=\P_{\mu}(X_s\in\cdot\mid s<\tau_\d)$.  We have $\|\mu_s-\alpha\|_{TV}\leq \varepsilon(s)$ and so
\begin{align*}
\P_{\mu_s}(t-s<\tau_\d)&\geq \P_{\alpha}(t-s<\tau_\d)-\varepsilon(s)\sup_{x\in E}\P_{x}(t-s<\tau_\d)\\
&\geq e^{-\lambda_0(t-s)}-\frac{\varepsilon(s)}{c_2(\alpha)}e^{-\lambda_0(t-s)}
\end{align*}
by~\eqref{eq:A2-for-alpha}.
Since $\varepsilon(s)$ decreases to $0$, there exists $s_0$ such that $\varepsilon(s_0)/c_2(\alpha)=1/2$. Using the Markov property, we deduce that, for any $t\geq s_0$,
\begin{align*}
\P_{\mu}(t<\tau_\d)&=\P_{\mu_{s_0}}(t-s_0<\tau_\d)\P_{\mu}(s_0<\tau_\d)\\
&\geq \frac{e^{-\lambda_0 (t-s_0)}}{2} \P_{\mu}(s_0<\tau_\d).
\end{align*}
Therefore, by~\eqref{eq:A2-for-alpha}, we have proved that
\begin{align*}
\P_{\mu}(t<\tau_\d)\geq c_2(\mu)\sup_{x\in E}\P_x(t<\tau_\d)
\end{align*}
for 
\begin{align*}
c_2(\mu)=\frac{1}{2}c_2(\alpha)e^{\lambda_0 s_0}\P_\mu(s_0<\tau_\d)>0.
\end{align*}

\bigskip
\noindent
\textit{Step 4: construction of the measure $\nu$ in (A1)}\\
\noindent We define the measure $\nu$ as the infimum of the family of measures $(\delta_x R_{0,2t}^{2t})_{x\in E}$, as defined in the next lemma, for a fixed $t$ such that $c_2(\alpha)\geq 2\varepsilon(t)$. To prove $(A1)$  for $t_0=2t$, we only need to check that $\nu$ is a positive measure.
\begin{lem}
\label{le:3}
Let $(\mu_x)_{x\in F}$ be a family of positive measures  on $E$ indexed by an arbitrary set $F$. For all $A\in {\cal E}$, we define
\begin{align*}
\mu(A)=\inf\left\{\sum_{i=1}^n \mu_{x_i}(B_i) \mid n\geq 1,\,x_1,\ldots,x_n\in F,\, B_1,\ldots,B_n\in{\cal E}\text{ partition of }A\right\}.
\end{align*}
Then $\mu$ is the largest non-negative measure on $E$ such that $\mu\leq \mu_x$ for all $x\in F$ and is called the infimum measure of $(\mu_x)_{x\in F}$.
\end{lem}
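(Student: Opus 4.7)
The plan is to verify three things: (a) $\mu$ is a non-negative measure on $({\cal E},E)$; (b) $\mu\leq \mu_x$ for all $x\in F$; (c) any non-negative measure $\nu$ with $\nu\leq \mu_x$ for all $x\in F$ satisfies $\nu\leq \mu$. Points (b) and (c) are the easy ones: for (b), the trivial partition $n=1$, $B_1=A$, $x_1=x$ shows $\mu(A)\leq \mu_x(A)$; for (c), given any admissible finite partition $(B_i,x_i)$ of $A$, we have $\sum_i\mu_{x_i}(B_i)\geq \sum_i\nu(B_i)=\nu(A)$, and taking the infimum over such partitions gives $\mu(A)\geq\nu(A)$. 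Note that $\mu(\emptyset)=0$ by taking $B_1=\emptyset$.

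The substance is in point (a). I would first establish \emph{finite additivity}: if $A=A_1\sqcup A_2$, then $\mu(A)=\mu(A_1)+\mu(A_2)$. The inequality $\mu(A)\leq\mu(A_1)+\mu(A_2)$ follows by concatenating two $\varepsilon$-optimal finite partitions of $A_1$ and $A_2$ into a finite partition of $A$. Conversely, given any finite partition $(B_i,x_i)$ of $A$, the refined partitions $(B_i\cap A_j)_i$ are finite partitions of $A_j$ for $j=1,2$, yielding
\begin{align*}
\sum_i\mu_{x_i}(B_i)=\sum_i\mu_{x_i}(B_i\cap A_1)+\sum_i\mu_{x_i}(B_i\cap A_2)\geq \mu(A_1)+\mu(A_2),
\end{align*}
and the infimum over partitions of $A$ gives $\mu(A)\geq \mu(A_1)+\mu(A_2)$. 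The proof immediately extends by induction to any finite disjoint union.

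The main obstacle is \emph{countable additivity}: for $A=\bigsqcup_{k\geq 1}A_k$, showing $\mu(A)=\sum_k\mu(A_k)$. The lower bound $\mu(A)\geq \sum_{k=1}^N\mu(A_k)$ for every $N$ follows from finite additivity applied to the partition $A_1\sqcup\cdots\sqcup A_N\sqcup\bigsqcup_{k>N}A_k$, giving $\mu(A)\geq\sum_k\mu(A_k)$ in the limit. For the upper bound, fix $\varepsilon>0$, choose for each $k$ a finite partition $(B_i^k,x_i^k)$ of $A_k$ with $\sum_i\mu_{x_i^k}(B_i^k)\leq \mu(A_k)+\varepsilon/2^k$, fix any $y\in F$, and for each $N\geq 1$ form the finite partition of $A$ consisting of the sets $\{B_i^k:k\leq N,\,i\leq n_k\}$ together with the ``tail'' $R_N:=A\setminus\bigsqcup_{k\leq N}A_k=\bigsqcup_{k>N}A_k$ labeled by $y$. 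This gives
\begin{align*}
\mu(A)\leq \sum_{k=1}^N\sum_i\mu_{x_i^k}(B_i^k)+\mu_y(R_N)\leq \sum_{k\geq 1}\mu(A_k)+\varepsilon+\mu_y(R_N).
\end{align*}
Here is the crux: to conclude, we need $\mu_y(R_N)\to 0$, which is the countable additivity of $\mu_y$ applied to the decreasing sequence $R_N\downarrow\emptyset$, valid provided $\mu_y(A)<\infty$ for some $y\in F$. In the application of interest (where the $\mu_x$'s are sub-probability measures $\delta_x R_{0,2t}^{2t}$) this finiteness is automatic; in general one assumes at least that some $\mu_y$ is $\sigma$-finite on $A$ or, equivalently, restricts the statement to sets $A$ on which some $\mu_y$ is finite. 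Letting first $N\to\infty$ and then $\varepsilon\to 0$ yields $\mu(A)\leq\sum_k\mu(A_k)$, completing the proof that $\mu$ is a measure.
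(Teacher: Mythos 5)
Your proof is correct and takes essentially the same route as the paper: both prove $\sigma$-additivity by bounding $\mu(A)$ above via concatenated $\varepsilon/2^k$-optimal partitions of the $A_k$ plus a ``tail'' set, and below by refining any partition of $A$ against the $A_k$; both then establish maximality by the same two-line argument. One genuine improvement on your part: you explicitly flag that the step $\mu_y(R_N)\to 0$ requires continuity from above of $\mu_y$ on $A$, hence $\mu_y(A)<\infty$ for some $y$ (or $\sigma$-finiteness). The paper uses exactly this step — in the form $\mu(A)\leq\mu(\cup_{k\leq K}A_k)+\mu_{x_0}(A\setminus\cup_{k\leq K}A_k)$ with the second term sent to zero — without remarking on the implicit finiteness hypothesis, which happens to be harmless there because in the application the $\mu_x=\delta_x R^{2t}_{0,2t}$ are sub-probability measures.
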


\begin{proof}[Proof of Lemma~\ref{le:3}]
Clearly $\mu(A)\geq 0$ for all $A\in{\cal E}$ and $\mu(\emptyset)=0$.
Let us prove the $\sigma$-additivity. Consider disjoints measurable sets $A_1,A_2,\ldots$ and define $A=\cup_{k} A_k$. Let $B_1,\ldots,B_n$ be a measurable partition of $A$. Then
\begin{align*}
\sum_{i=1}^n \mu_{x_i}(B_i)=\sum_{k=1}^\infty \sum_{i=1}^n \mu_{x_i}(B_i\cap A_k)\geq \sum_{k=1}^\infty \mu(A_k).
\end{align*}
Hence $\mu(A)\geq \sum_{k=1}^\infty \mu(A_k)$.

Fix $K\geq 1$ and $\epsilon>0$. For all $k\in\{1,\ldots,K\}$, let $(B_i^k)_{i\in \{1,\ldots,n_k\}}$ be a partition of $A_i$ and $(x^k_i)_{i\in \{1,\ldots,n_k\}}$ be such that
\begin{align*}
\mu(A_k)\geq \sum_{i=1}^{n_k} \mu_{x_i^k}(B_i^k)-\frac{\epsilon}{2^k}.
\end{align*}
Then
\begin{align*}
\sum_{k=1}^K\mu(A_k)\geq \sum_{k=1}^K \left(\sum_{i=1}^{n_k} \mu_{x_i^k}(B_i^k)-\frac{\epsilon}{2^k}\right)\geq \mu\left(\bigcup_{k=1}^K A_k\right)-\epsilon.
\end{align*}
Since, for any $x_0\in F$, 
\begin{align*}
\mu(A)\leq \mu\left(\bigcup_{k=1}^K A_k\right)+\mu_{x_0}\left(A\setminus \bigcup_{k=1}^K A_k\right),
\end{align*}
choosing $K$ large enough, $\mu(A)\leq \mu\left(\cup_{k=1}^K A_k\right)+\epsilon$.
Combining this with the previous inequality, we obtain
\begin{align*}
\mu(A)\geq \sum_{k=1}^\infty \mu(A_k)\geq \mu(A)-2\epsilon.
\end{align*}
This concludes the proof that $\mu$ is a non-negative measure.

Let us now prove that $\mu$ is the largest non-negative measure on $E$ such that $\mu\leq \mu_x$ for all $x\in F$. Let $\hat{\mu}$ be another measure such that $\hat{\mu}\leq \mu_x$. Then, for all $A\in{\cal E}$ and $B_1,\ldots,B_n$ a measurable partition of $A$ and $x_1,\ldots,x_n\in F$,
\begin{align*}
\hat{\mu}(A)=\sum_{i=1}^n\hat{\mu}(B_i)\leq \sum_{i=1}^n \mu_{x_i}(B_i).
\end{align*}
Taking the infimum over $(B_i)$ and $(x_i)$ implies that $\hat{\mu}\leq \mu$.
\end{proof}

\noindent Let us now prove that $\nu$ is a positive measure. By~\eqref{eq:semigroup2}, for any $x\in E$, $t\geq 0$ and $A\subset E$ measurable,
\begin{align*}
\delta_x R^{2t}_{0,2t}(A)&=\P_{\delta_x R^t_{0,t}}\left(X_t\in A|t<\tau_{\d}\right)\\
&=\frac{\int_E \P_y(X_t\in A)\delta_x R^t_{0,t}(dy)}{\int_E \P_y(t<\tau_\d)\delta_x R^t_{0,t}(dy)}.
\end{align*}
By Step~2, we have
\begin{align*}
\delta_x R^{2t}_{0,2t}(A)&\geq 
c_2(\alpha)e^{\lambda_0 t}\int_E \P_y(X_t\in A)\delta_x R^t_{0,t}(dy).
\end{align*}
We set $\nu_{t,x}^+=(\alpha-\delta_x R_{0,t}^t)_+$.
Using the inequality (between measures) $\delta_x R_{0,t}^t\geq \alpha-\nu_{t,x}^+$,
\begin{align*}
\delta_x R^{2t}_{0,2t}(A)&\geq c_2(\alpha)\alpha(A)-
c_2(\alpha)e^{\lambda_0 t}\int_E \P_y(X_t\in A\mid t<\tau_\d)\P_y(t<\tau_\d)\nu_{t,x}^+(dy).
\end{align*}
Since $\nu_{t,x}^+$ is a positive measure, Step~2 implies again
\begin{align*}
\delta_x R^{2t}_{0,2t}(A)&\geq c_2(\alpha)\alpha(A)-
\int_E \P_y(X_t\in A\mid t<\tau_\d)\nu_{t,x}^+(dy)\\
&= c_2(\alpha)\alpha(A)-
\int_E \delta_y R^t_{0,t}(A)\nu_{t,x}^+(dy)\\
&\geq \left(c_2(\alpha)-\nu_{t,x}^+(E)\right)\alpha(A)-\int_E \left(\delta_y R^t_{0,t}(A)-\alpha(A)\right)\nu_{t,x}^+(dy)\\
&\geq \left(c_2(\alpha)-\varepsilon(t)\right)\alpha(A)-\int_E \left(\delta_y R^t_{0,t}(A)-\alpha(A)\right)_+\nu_{t,x}^+(dy),
\end{align*}
where the inequality $\nu_{t,x}^+(E)\leq \varepsilon(t)$ follows from~\eqref{eq:epsilon-t}. Moreover $\nu_{t,x}^+=(\alpha-\delta_x R_{0,t}^t)_+\leq \alpha$, therefore
\begin{align*}
\delta_x R^{2t}_{0,2t}(A)&\geq \left(c_2(\alpha)-\varepsilon(t)\right)\alpha(A)-\int_E \left(\delta_y R^t_{0,t}(A)-\alpha(A)\right)_+\alpha(dy).
\end{align*}
Hence, for all $B_1,\ldots,B_n$ a measurable partition of $E$ and all $x_1,\ldots,x_n\in E$, 
\begin{align*}
\sum_{i=1}^n \delta_{x_i} R_{0,2t}^2t(B_i)\geq 
\left(c_2(\alpha)-\varepsilon(t)\right)\alpha(E)-\int_E \sum_{i=1}^n\left(\delta_y R^t_{0,t}(B_i)-\alpha(B_i)\right)_+\alpha(dy).
\end{align*}
Now 
\begin{align*}
\sum_{i=1}^n\left(\delta_y R^t_{0,t}(B_i)-\alpha(B_i)\right)_+
\leq \left\|\delta_y R^t_{0,t}(B_i)-\alpha(B_i)\right\|_{TV} \leq \varepsilon(t).
\end{align*}
Therefore $\nu(E)\geq c_2(\alpha)-2\varepsilon(t)>0$.

This concludes the proof of Theorem~\ref{thm:QSD_full}.
%
%

\subsection{Proof of Corollary~\ref{cor:1}}
\label{sec:proof-of-cor1}

Assume that the hypotheses of Theorem~\ref{thm:QSD_full} are satisfied and let $\mu_1,\mu_2$ be two probability measures and $t\geq 0$. Without loss of generality, we assume that $\P_{\mu_1}(t<\tau_\d)\geq \P_{\mu_2}(t<\tau_\d)$ and prove that
\begin{align*}
\left\|\P_{\mu_1}\left(X_t\in\cdot\mid t<\tau_\d\right)-\P_{\mu_2}\left(X_t\in\cdot\mid t<\tau_\d\right)\right\|\leq \frac{(1-c_1c_2)^{\lfloor t/t_0\rfloor}}{c_2(\mu_1)}\|\mu_1-\mu_2\|_{TV}.
\end{align*}
Using the relation
\begin{align*}
\mu_1 P_t=(\mu_1-(\mu_1-\mu_2)_+)P_t+(\mu_1-\mu_2)_+P_t,
\end{align*}
the similar one for $\mu_2 P_t$ and $\mu_1-(\mu_1-\mu_2)_+=\mu_2-(\mu_2-\mu_1)_+$, we can write
\begin{align}
\label{eq:mu1-mu2}
\frac{\mu_1 P_t}{\mu_1 P_t\11_E}-\frac{\mu_2 P_t}{\mu_2 P_t\11_E}=\alpha_1 P_t-\alpha_2 P_t,
\end{align}
where $\alpha_1$ and $\alpha_2$ are the positive measures defined by
\begin{align*}
\alpha_1=\frac{(\mu_1-\mu_2)_+}{\mu_1 P_t\11_E}
\end{align*}
and
\begin{align*}
\alpha_2=\frac{(\mu_2-\mu_1)_+}{\mu_2 P_t\11_E}+\left(\frac{1}{\mu_2 P_t\11_E}-\frac{1}{\mu_1 P_t\11_E}\right)\times\left(\mu_1-(\mu_1-\mu_2)_+\right).
\end{align*}
We immediately deduce from\eqref{eq:mu1-mu2}, that $\alpha_1P_t\11_E=\alpha_2P_t\11_E$, so that
\begin{align*}
\|\alpha_1 P_t-\alpha_2 P_t\|_{TV}&=\alpha_1 P_t\11_E\left\|\frac{\alpha_1}{\alpha_1P_t\11_E}P_t-\frac{\alpha_2}{\alpha_2P_t\11_E}P_t\right\|_{TV}\\
& \leq 2(1-c_1c_2)^{\lfloor t/t_0\rfloor}\alpha_1P_t\11_E.
\end{align*}
Since
\begin{align*}
\alpha_1 P_t\11_E&=\frac{(\mu_1-\mu_2)_+P_t\11_E}{\mu_1P_t\11_E}\\
&\leq (\mu_1-\mu_2)_+(E)\frac{\sup_{\rho\in{\cal M}_1(E)}\rho P_t\11_E}{\mu_1P_t\11_E}\leq \frac{\|\mu_1-\mu_2\|_{TV}}{2c_2(\mu_1)}
\end{align*}
by definition of $c_2(\mu_1)$, the proof of Corollary~\ref{cor:1} is complete.

\subsection{Proof of Proposition~\ref{prop:2}}
\label{sec:pf-prop:2}
\noindent \textit{Step 1: Existence of $\eta$.}\\
For all $x\in E$ and $t\geq 0$, we set
\begin{align*}
\eta_t(x)=\frac{\P_x(t<\tau_\d)}{\P_\alpha(t<\tau_\d)}=e^{\lambda_0 t}\P_x(t<\tau_\d).
\end{align*}
By the Markov property
\begin{align*}
\eta_{t+s}(x)&=e^{\lambda_0 (t+s)}\E_x\left(\11_{t<\tau_\d}\P_{X_t}(s<\tau_\d)\right)\\
&=\eta_t(x)\E_x\left(\eta_s(X_t)\mid t<\tau_\d\right).
\end{align*}
By (A$2''$), $\int_E\eta_s(y)\rho(dy)$ is uniformly bounded by $1/c_2(\alpha)$ in $s$ and in $\rho\in{\cal M}_1(E)$. Therefore, by~\eqref{eq:expo-cv},
\begin{align*}
\left|\E_x\left(\eta_s(X_t)\mid t<\tau_\d\right)- \alpha(\eta_s)\right|
\leq \frac{C}{c_2(\alpha)}e^{-\gamma t}.
\end{align*}
Since, $\alpha(\eta_s)=1$, we obtain 
\begin{align*}
\sup_{x\in E}\left|\eta_{t+s}(x)-\eta_t(x)\right|\leq \frac{C}{c_2(\alpha)^2}e^{-\gamma t}.
\end{align*}
That implies that $(\eta_t)_{t\geq 0}$ is a Cauchy family for the uniform norm and hence converges uniformly to a bounded limit $\eta$. By Lebesgue's theorem, $\alpha(\eta)=1$.

It only remains to prove that $\eta(x)>0$ for all $x\in E$. This is an immediate consequence of (A$2''$).

\medskip\noindent \textit{Step 2: Eigenfunction of the infinitesimal generator.}\\
We prove now that $\eta$ belongs to the domain of the infinitesimal generator $\cal L$ of the semi-group $(P_t)_{t\geq 0}$ and that
\begin{align}
\label{eq:Leta}
{\cal L}\eta=-\lambda_0\eta.
\end{align}
For any $h>0$, we have by the dominated convergence theorem and Step~1,
\begin{align*}
P_h\eta(x)&=\E_x\left(\eta(X_h)\right)
=\lim_{t\rightarrow\infty}\frac{\E_x\left(\P_{X_h}(t<\tau_\d)\right)}{\P_{\alpha}(t<\tau_\d)}.
\end{align*}
We have $\P_{\alpha}(t<\tau_\d)=e^{-\lambda_0 h}\P_{\alpha}(t+h<\tau_\d)$. Hence, by the Markov property,
\begin{align*}
P_h\eta(x) &= \lim_{t\rightarrow\infty}e^{-\lambda_0 h}\frac{\P_{x}(t+h<\tau_\d)}{\P_{\alpha}(t+h<\tau_\d)}\\
           &= e^{-\lambda_0 h} \eta(x).
\end{align*}
Since $\eta$ is uniformly bounded, it is immediate that
\begin{align*}
\frac{P_h\eta-\eta}{h}\xrightarrow[h\rightarrow 0]{\|\cdot\|_{\infty}}-\lambda_0 \eta.
\end{align*}
By definition of the infinitesimal generator, this implies that $\eta$ belongs to the domain of $\cal L$  and that~\eqref{eq:Leta} holds.

\subsection{Proof of Corollary~\ref{cor:2}}
\label{sec:proof-of-cor2}

Since $Lf=\lambda f$, we have $\E_x(f(X_t))=P_t f(x)=e^{\lambda t} f(x)$. When $f(\d)\neq 0$, taking $x=\d$, we see that $\lambda=0$ and, taking $x\not= \d$, the left hand side converges to $f(\d)$ and thus $f$ is constant.
 So let us assume that $f(\d)=0$. By property (v), 
\begin{align*}
\frac{P_t f(x)}{P_t\11_E(x)}-\alpha(f)\xrightarrow[t\rightarrow +\infty]{} 0
\end{align*}
uniformly in $x\in E$ and exponentially fast. Assume first that $\alpha(f)\neq 0$, then by Proposition~\ref{prop:2}, 
\begin{align*}
\frac{e^{(\lambda+\lambda_0)t} f(x)}{\eta(x)}\xrightarrow[t\rightarrow+\infty]{} \alpha(f),\ \forall x\in E.
\end{align*}
We deduce that $\lambda=-\lambda_0$ and $f(x)=\alpha(f)\eta(x)$ for all $x\in E\cup\{\d\}$. Assume finally that $\alpha(f)=0$, then, using~\eqref{eq:youpi2} to give a lower bound for $1/P_t\11_E(x)$, we deduce that
\begin{align*}
c_2(\alpha) e^{(\gamma+\lambda+\lambda_0)t}f(x)\leq \frac{e^{\gamma t}P_tf(x)}{P_t\11_E(x)},\ \forall x\in E,
\end{align*}
where the right hand side is bounded by property (v) of Theorem~\ref{thm:QSD_full}. Thus $\gamma+\lambda+\lambda_0\leq 0$.

\section{$Q$-process: proofs of the results of Section~\ref{sec:results-Q-process}}
\label{sec:proofs-Q-process}
We first prove Theorem~\ref{thm:Q-process} in Subsection~\ref{sec:pf-theorem-Q-process} and then Theorem~\ref{thm:generator-of-Q-process} in Subsection~\ref{sec:pf-thm-generator-of-Q-process}.

\subsection{Proof of Theorem~\ref{thm:Q-process}}
\label{sec:pf-theorem-Q-process}

\noindent \textit{Step 1: existence of the $Q$-process $\Q_x$ and expression of its transition kernel.}
We introduce $\Gamma_t=\11_{t<\tau_\d}$ and define the probability measure
\begin{align*}
Q^{\Gamma,x}_t=\frac{\Gamma_t}{\E_x\left(\Gamma_t\right)}\P_x,
\end{align*}
so that the $Q$-process exists if and only if $Q_t^{\Gamma,x}$ admits a proper limit when $t\rightarrow\infty$.
We have by the Markov property
\begin{align*}
\frac{\E_x\left(\Gamma_t\mid{\cal F}_s\right)}{\E_x\left(\Gamma_t\right)}=\frac{\11_{s<\tau_\d}\P_{X_s}\left(t-s<\tau_\d\right)}{\P_x\left(t<\tau_\d\right)}.
\end{align*}
By Proposition~\ref{prop:2}, this is uniformly bounded and converges almost surely to 
\begin{align*}
 M_s:=\11_{s<\tau_\d}e^{\lambda_0 s}\frac{\eta(X_s)}{\eta(x)}.
\end{align*}
By the dominated convergence Theorem, we obtain that
\begin{align*}
\E_x\left(M_s\right)=1.
\end{align*}
By the penalisation's theorem of Roynette, Vallois and Yor \cite[ Theorem~2.1]{Roynette_Vallois_Yor_2006}, these two conditions imply that $M$ is a martingale under $\P_x$ and that $Q_t^{\Gamma,x}(\Lambda_s)$ converges to $\E_x\left(M_s\11_{\Lambda_s}\right)$ for all $\Lambda_s\in{\cal F}_s$ when $t\rightarrow\infty$. This means that $\Q_x$ is well defined and 
\begin{align*}
\restriction{\frac{d\Q_x}{d\P_x}}{{\cal F}_s}=M_s.
\end{align*}
In particular, the transition kernel of the $Q$-process is given by
\begin{align*}
\tilde{p}(x;t,dy)=e^{\lambda_0 t}\frac{\eta(y)}{\eta(x)}p(x;t,dy).
\end{align*}

Let us now prove that $(\Q_x)_{x\in E}$ defines a Markov process, that is
\begin{align*}
\E_{\Q_x}\left(f(X_t)\mid {\cal F}_s\right)&=\E_{\Q_x}\left(f(X_t)\mid X_s\right),
\end{align*}
for all $s<t$ and all $f\in{\cal B}(E)$.
One easily checks from the definition of the conditional expectation that
\begin{align*}
M_s\E_{\Q_x}\left(f(X_t)\mid{\cal F}_s\right)&=\E_{x}\left(M_t f(X_t)\mid{\cal F}_s\right)
\end{align*}
By definition of $M_t$ and by the Markov property of $\P_x$, we deduce that
\begin{align*}
M_s \E_{\Q_x}\left(f(X_t)\mid{\cal F}_s\right)=\E_x(M_t f(X_t)\mid {X}_s).
\end{align*}
 Since 
$\E_{x}\left(M_t f(X_t)\mid X_s\right)=M_s \E_{\Q_x}\left(f(X_t)\mid X_s\right)$, this implies the Markov property for the $Q$-process.
The same proof gives the strong Markov property of $X$ under $(\Q_x)_{x\in E}$ provided that $X$ is strong Markov under $(\P_x)_{x\in E}$.

This concludes parts $i)$ and $ii)$ of Theorem~\ref{thm:Q-process}.

\medskip\noindent \textit{Step 2: exponential ergodicity of the $Q$-process.}\\
Let us first check that $\beta$ is invariant for $X$ under $\Q$. 
By~\eqref{eq:alphaPt} and \eqref{eq:semi-group-Q}, we have for all $t\geq 0$ and $\varphi\in{\cal B}(E)$
\begin{align*}
\alpha(\eta\tilde{P}_t\varphi)&=e^{\lambda_0 t}\alpha(P_t(\eta\varphi))=\alpha(\eta\varphi).
\end{align*}
Since $\eta$ is bounded, $\beta$ is well defined and is an invariant distribution for $X$ under $\Q$.

By~\eqref{eq:ergodicite1} and the semi-group property of $(R_{s,t}^T)_{s,t}$, we have for all $\mu_1,\mu_2\in{\cal M}_1(E)$ and all $t\leq T$
\begin{align*}
\left\|\mu_1 R^T_{0,t}-\mu_2 R^T_{0,t} \right\|_{TV}\leq (1-c_1c_2)^{\lfloor t/t_0\rfloor}\|\mu_1-\mu_2\|_{TV}.
\end{align*}
By definition of $R^T_{0,t}$ and by dominated convergence when $T\rightarrow\infty$, we obtain
\begin{align*}
\left\| \Q_{\mu_1}(X_t\in \cdot)-\Q_{\mu_2}(X_t\in \cdot)\right)\|_{TV}\leq (1-c_1c_2)^{\lfloor t/t_0\rfloor}\|\mu_1-\mu_2\|_{TV}.
\end{align*}
Taking $\mu_2=\beta$, this implies that $X$ is exponentially ergodic under $\Q$ with unique invariant distribution $\beta$.

\subsection{Proof of Theorem~\ref{thm:generator-of-Q-process}}
\label{sec:pf-thm-generator-of-Q-process}

\noindent\textit{Step 1: computation of $\tilde{L}^w$ and a first inclusion in \eqref{eq:domain-l-tilde}.}\\
Let us first show that
\begin{align*}
{\cal D}(\tilde{L}^w)\supset \left\{f\in{\cal B}(E),\;\eta f\in{\cal D}(L^w)\text{ and }\frac{L^w(\eta f)}{\eta}\text{is bounded}\right\}.
\end{align*}
Let $f$ belong to the set in the r.h.s. of the last equation. We have
\begin{align}
\label{eq:ptilde-utile}
\frac{\tilde{P}_h f-f}{h}&=\frac{e^{\lambda_0 h}}{\eta} \frac{P_h(\eta f)-\eta f}{h}+f\frac{e^{\lambda_0 h}-1}{h}.
\end{align}
So, obviously, $\frac{\tilde{P}_h f-f}{h}$ converges pointwise to $\tilde{L}^w f$. By \cite[I.1.15.C]{Dynkin1965},
\begin{align*}
P_t (\eta f)-\eta f=\int_0^t P_s L^w (\eta f) ds.
\end{align*}
Since $|L^w(\eta f)|\leq C\eta$ for some constant $C>0$ and $P_s \eta=e^{\lambda_0 s}\eta$, we obtain
\begin{align*}
\left|P_t (\eta f)-\eta f\right|\leq C \frac{e^{\lambda_0 t}-1}{\lambda_0}\eta.
\end{align*}
Therefore the r.h.s. of \eqref{eq:ptilde-utile} is uniformly bounded. Finally, $\tilde{L}^w f$ is the b.p.--limit of $\frac{\tilde{P}_h f-f}{h}$.

It only remains to check that b.p.-$\lim \tilde{P}_h\tilde{L}^w f=\tilde{L}^wf$. Since $\tilde{L}^wf$ is bounded, we only have to prove the pointwise convergence. We have
\begin{align*}
\tilde{P}_h\tilde{L}^w f&=\lambda_0 \tilde{P}_h f+\tilde{P}_h\frac{L^w(\eta f)}{\eta}= \lambda_0 \tilde{P}_h f+\frac{e^{\lambda_0 h}}{\eta}P_h L^w(\eta f)
\end{align*}
where the last equality comes from \eqref{eq:semi-group-Q}. Since $\eta f\in{\cal D}(L^w)$, the pointwise convergence is clear.

\noindent\textit{Step 2: the second inclusion in \eqref{eq:domain-l-tilde}.}\\
Let $f\in {\cal D}(\tilde{L}^w)$. We have b.p.--convergence in \eqref{eq:ptilde-utile} and, since $\eta$ and $f$ are bounded, $\frac{P_h (\eta f)-\eta f}{h}$ b.p.--converges to some limit $g\in{\cal B}(E\cup\{\d\})$ (recall that by convention $\eta f(\d)=0$) such that
\begin{align*}
g=\eta \tilde{L}^w f-\lambda_0 \eta f.
\end{align*}
Let us check that b.p.-$\lim P_h g=g$. Since $g$ is bounded, we only have to prove the pointwise convergence. We have
\begin{align*}
P_h g&=P_h(\eta \tilde{L}^w f)-\lambda_0 P_h(\eta f)=\eta e^{-\lambda_0 h} \tilde{P}_h \tilde{L}^w f-\lambda_0 P_h(\eta f),
\end{align*}
by \eqref{eq:semi-group-Q}. Since $f\in {\cal D}(\tilde{L}^w)$, the first term converges pointwise $\eta \tilde{L}^w f$. The second term converges to $-\lambda_0 \eta f$ since we have proved the b.p.--convergence of $\frac{P_h (\eta f)-\eta f}{h}$. We deduce that
\begin{align*}
\text{b.p.-}\lim_{h\rightarrow 0} P_h(g)= \eta \tilde{L}^w f-\lambda_0 \eta f =g.
\end{align*}
Thus $\eta f\in {\cal D}(L^w)$ and $L^w(\eta f)=\eta \tilde{L}^w f-\lambda_0 \eta f$, so that $ |L^w(\eta f)|\leq C\eta$ for some constant $C>0$.

\noindent\textit{Step 3: characterization of $(\tilde{P}_t)$ by $\tilde{L}^w$}\\
We assume that $E$ is a topological space and that $\cal E$ is the Borel $\sigma$-field. By \cite[Thm II.2.3]{Dynkin1965}, the result follows if
$(\tilde{P}_t)$ is stochastically continuous, \textit{i.e.}, for all open set $U\subset E$ and all $x\in U$,
\begin{align*}
\tilde{P}_h\11_U(x)\xrightarrow[h\rightarrow 0]{}1.
\end{align*}
By \eqref{eq:semi-group-Q}, we have
\begin{align*}
\tilde{P}_t\11_{U^c}(x)&=\frac{e^{\lambda_0t}}{\eta(x)}P_t(\eta\11_{U^c})(x)\leq \frac{e^{\lambda_0t}\|\eta\|_{\infty}}{\eta(x)}P_t(\11_{U^c})(x).
\end{align*}
The results follows from the assumption~\eqref{eq:stochasticallycontinuous}.

\bigskip
\noindent{\bf Acknowledgements:} We are grateful to the anonymous referee for constructive comments and suggestions.

\bibliographystyle{abbrv}
\bibliography{biblio-bio,biblio-denis,biblio-math,biblio-math-nicolas}

\end{document}